\documentclass[10pt,paper=letter]{scrartcl}

\usepackage{graphicx}
\usepackage{amsmath}
\usepackage{amsfonts}
\usepackage{amssymb}
\usepackage{bm}

\usepackage[numbers,sort]{natbib}
\usepackage{xspace}
\usepackage{xcolor}
\usepackage{amssymb}
\usepackage{enumerate}
\usepackage{verbatim}
\usepackage{subfigure}
\usepackage{bbm}
\usepackage{xr}
\usepackage{threeparttable}

\usepackage[colorlinks,linkcolor=blue,citecolor=red]{hyperref}
\usepackage{hypernat}
\usepackage[bf,format=plain,singlelinecheck=false]{caption}

\usepackage{algorithm}
\usepackage{algorithmic}

\setlength{\columnwidth}{3.43in}
\setlength{\columnsep}{0.3in}
\setlength{\textwidth}{7.16in}
\setlength{\oddsidemargin}{-0.33in}

\setlength{\topmargin}{-1.2in}
\setlength{\textheight}{9in}

\newcommand\define{\mathrel{\overset{\makebox[0pt]{\mbox{\normalfont\tiny\sffamily def}}}{=}}}
\newcommand\bbeta{\boldsymbol{\beta}}

\newcommand{\err}{e}  
\newcommand{\st}{\mbox{ s.t. }}
\newcommand{\bx}{\mathbf{x}}
\newcommand{\bh}{\mathbf{h}}

\newcommand{\bw}{\mathbf{w}}
\newcommand{\bv}{\mathbf{v}}

\newcommand{\pen}{\textnormal{pen}}
\newcommand{\ATEsig}{\sigma_{\pen}}

\newcommand{\norm}[1]{\left\Vert #1 \right\Vert}
\newcommand{\shortnorm}[1]{\Vert #1 \Vert}

\newcommand{\ACE}{\textnormal{ACE}}
\newcommand{\unadj}{\textnormal{unadj}}
\newcommand{\adj}{\textnormal{adj}}
\newcommand{\ACEhat}{\hat{\tau}}
\newcommand{\OLS}{\textnormal{OLS}}
\newcommand{\Lasso}{\textnormal{Lasso}}
\newcommand{\Ridge}{\textnormal{Ridge}}
\newcommand{\naiveEN}{\textnormal{naiveEN}}
\newcommand{\EN}{\textnormal{EN}}
\newcommand{\ada}{\textnormal{ada}}
\newcommand{\ATEunadj}{\sigma_{\unadj}}

\newtheorem{thm}{Theorem}
\newtheorem{lem}{Lemma}

\newtheorem{condition}{Condition}
\newtheorem{remark}{Remark}

\newtheorem{proof}{Proof}

\DeclareMathOperator*{\argmin}{arg\,min}

\hyphenation{thres-hold}

\title{Penalized regression adjusted causal effect estimates in high dimensional randomized experiments}

\author{
	Hanzhong Liu\thanks{Center for Statistical Science and Department of Industrial Engineering, Tsinghua University, Beijing, 100084, China.}
	\and
	Yuehan Yang\thanks{School of Statistics and Mathematics, Central University of Finance and Economics, Beijing, 102206, China.}
}

\begin{document}
	
\maketitle

\begin{abstract}
  \noindent	\footnotesize\textbf{Abstract}

  \noindent	 Regression adjustments are often considered by investigators to improve the estimation efficiency of causal effect in randomized experiments when there exists many pre-experiment covariates. In this paper, we provide conditions that guarantee the penalized regression including the Ridge, Elastic Net and Adapive Lasso adjusted causal effect estimators are asymptotic normal and we show that their asymptotic variances are no greater than that of the simple difference-in-means estimator, as long as the penalized estimators are risk consistent. We also provide  conservative estimators for the asymptotic variance which can be used to construct asymptotically conservative confidence intervals for the average causal effect (ACE). Our results are obtained under the Neyman-Rubin potential outcomes model of randomized experiment when the number of covariates is large. Simulation study shows the advantages of the penalized regression adjusted ACE estimators over the difference-in-means estimator.
  \ \\

  \noindent \textbf{Keywords}

  \noindent randomized experiment, Neyman-Rubin model, average treatment effect, Lasso, Ridge, Elastic Net, Adaptive Lasso, asymptotic normality, causal inference
\end{abstract}

\section{Introduction}

Randomized experiments are golden standard for conducting inference for various causal effects. Randomization ensures that treatment assignment does not depend on any potential confounders which makes many causal effects can be identified without bias. The Neyman-Rubin potential outcomes model ~\cite{Neyman:1923,Rubin:1974} are often used to analyze randomized experiments. This model assumes that, for each unit $i$ in the experiment, there exists a pair of potential outcomes representing the responses of the unit $i$ receiving or not receiving the treatment (for example, a training program or a new drug). They are denoted as $a_i$ if the unit $i$ is assigned to the treatment group and $b_i$ if it is exposed to the control group. Both $a_i$ and $b_i$ are assumed to be fixed, nonrandom quantities and all the units in the experiment are thought of as the population. The source of randomness comes only from the assignment of treatment controlled by the experimenter. The causal effect for the unit $i$ can be defined as the difference of the two potential outcomes $a_i - b_i$.  Since each unit is either exposed to the treatment group or to the control group, we cannot observe $a_i$ and $b_i$ simultaneously. Hence, the unit level causal effect is not identifiable without further assumptions. Fortunately, the Average Causal Effect (ACE) defined by the averages of $a_i - b_i$ over all the units in the experiment, $\sum_{i=1}^{n} \left( a_i - b_i \right) /n $ where $n$ is the total number of units, can be estimated without bias by the difference of average outcomes in the treatment and control groups if the Stable Unit Treatment Value Assumption (SUTVA) holds~\cite{Neyman:1923}. Basically, SUTVA states that there is only one version of treatment, and that the potential outcome of one unit should be unaffected by the particular assignment of treatments to the other units. This simple estimator is often called the difference-in-means estimator or the unadjusted estimator.

In practice, randomized experiments often occur with pre-experiment covariate information is collected about each unit, such as demographic characteristics or behavior history. Some of the covariates are relevant to the outcomes and others may not. Taking the relevant covariates into account when analyzing the experimental outcomes may help to improve the estimation accuracy of the ACE. This motivate investigators to consider linear regression adjustment when estimating the ACE.  In low-dimension scenario where the number of covariates $p$ is fixed when the total number of units $n$ goes to infinity, it was shown that if the treatment by covariates interactions are included in the adjustment, the resulting ordinary least squares adjusted ACE estimator is asymptotically unbiased and normal with asymptotic variance no greater than that of the difference-in-means estimator~\cite{lin2013}. In high-dimension scenario where $p$ is comparable to or even much larger than $n$, the ordinary least squares does not work due to overfitting. Since many covariates may be irrelevant to the potential outcomes, penalization or some form of regularization is required to select important covariates and to make effective regression adjustment. The low-dimensional theoretical results have been extended to high-dimensional setting by replacing the ordinary least squares by the Lasso~\cite{bloniarz2015lasso,Wager2016}.

The Lasso~\cite{Tibshirani1996} is an $l_1$ penalized method which is population among many other because it can perform parameter estimation and model selection simultaneously and it is computational feasible. However, the Lasso has some drawbacks: (1) it selects at most $n$ covariates; (2) its estimation and prediction errors can be large when the correlations between covariates are high; (3) it is not guaranteed to be both model selection consistent and asymptotically normal for one particular value of the regularization parameter. In order to overcome these drawbacks and deal with different sparsity structures of the regression coeffients, various variants of the Lasso have been proposed in high-dimensional sparse linear regression model, such as the Elastic Net~\cite{Zou2005}, adaptive Lasso~\cite{Zou2006,HuangZhang2008}, group Lasso~\cite{Yuan2006}, SCAD~\cite{FanLi2001}, Lasso+OLS~\cite{liu2013} among many others. Detailed discussions about the properties of the Lasso and its variants can be found in a series of literatures, including~\cite{KnightFu2000,ZhaoYu2006,Fan2008,Huang2008,Bickel2009,unifiedpaper2009journal}. Most of the results are obtained under the linear regression model assuming certain type of sparsity.

The Neyman-Rubin potential outcomes model is different from linear regression model in the following aspects: (1) it makes no assumptions of linearity on the relationship between the potential outcomes and the covariates; (2) both the potential outcomes and the covariates are assumed to be fix quantities; (3) the treatment group is sampled without replacement from a finite population so that the observations are not independent; (4) the only randomness comes from the random assignment of treatment instead of an additive random error term. Although we use linear regression to analyze the outcomes of randomized experiments, but the linear regression model is misspecified. Understanding the properties of the penalized regression adjusted ACE estimators in the Neyman-Rubin model remains insufficient in the literature. In this paper, we fill in this gap by investigating the theoretical properties of the Ridge, Elastic Net, and Adaptive Lasso adjusted ACE estimator in the Neyman-Rubin model.

\textbf{ Our contributions} are summarized as follows:
\begin{enumerate}
\item We establish a uniform theory on the asymptotic normality of penalized regression adjusted ACE estimates and show that their asymptotic variances are less than or equal to that of the difference-in-means estimator under appropriate conditions. Our results are obtained under the Neyman-Rubin non-parametric model of randomization and under the finite population asymptotic framework. It supplements the current results that are obtained under a super population framework, assuming that units are independent and identically distributed which are drawn from a hypothetical population; see \cite{Belloni2013,Belloni2013program,Wager2016} for example. 
\item We apply the uniform theory to the Ridge, Elastic Net and Adaptive Lasso adjusted ACE estimators and provide conservative estimators for the asymptotic variance based on the residuals, yielding asymptotically conservative confidence intervals for the ACE which can be comparable to interval constructed by the Lasso adjusted ACE estimator and are substantially narrower than that constructed by the difference-in-means estimator. As a by-product, we extend the $l_1$ consistency of the Ridge, Elastic Net and Adaptive Lasso from linear regression model to high dimensional Neyman-Rubin causal inference model \cite{Bunea2008,HuangZhang2008}.
\item We conduct simulation studies to compare the finite sample performance of the Lasso, Ridge, Elastic Net and Adaptive Lasso adjusted ACE estimators with the difference-in-means estimator and show the advantages of the penalized regression adjustments.
\end{enumerate}

The rest of the paper is organized as follows. Section 2 presents the framework of average causal effect estimation with regression adjustment and Section 3 shows the theoretical properties of penalized regression adjusted ACE estimators including the Ridge, Elastic Net and Adaptive Lasso. Section 4 contains numerical results and Section 5 summarizes the paper. All the proofs and tables are in the Appendix.

\section{Causal effect estimation with regression adjustment}
\subsection{Framework and notations}

In the section, we introduce notations of the Neyman-Rubin potential outcomes model for randomized experiments; see \cite{Neyman:1923}, \cite{Rubin:1974}, and \cite{holland1986statistics} for more details. We follow the notations introduced in \cite{freedman2008regression_a}, \cite{lin2013} and \cite{bloniarz2015lasso}.  For each unit $i$ in the experiment, we have denoted its potential outcomes under treatment and control as $a_i$ and $b_i$ respectively. We will use a random indicator $T_i$ to denote the assigned treatment status, which takes on a value $1$ for a treated unit, or $0$ for an untreated (or control) unit.  We assume that the set of treated units is sampled without replacement from the full population (all the units in the experiment), where the size of the treatment group and control group, $n_A$ and $n_B$, are fixed before the experiment; thus the $T_i$ are identically distributed but not independent.
We denote $Y_i$ as the observed outcome for unit $i$, thus,
\begin{equation*}
Y_i = T_i a_i + (1-T_i) b_i.
\end{equation*}
This equation implies that the experimenter observes the potential outcome $a$ for those who is assigned to the treatment group, and the potential outcome $b$ for those who is not. Note that the model does not make any assumption on the relationship between the observed outcomes and the pre-experiment, baseline covariates about the units in the study, for example, age, gender and genetic makeup. We denote the covariates for unit $i$ as a column vector $\bx_i = (x_{i1},...,x_{ip})^T \in \mathbb{R}^{p}$ and the full design matrix of the experiment as $X=(\bx_1,...,\bx_n)^T$. In the following, we will assume that the covariates are centered at their population means, that is, $n^{-1} \sum_{i=1}^{n} \bx_i = \mathbf{0}$.

Let $A = \{i \in \{1,...,n\}:  T_i = 1\}$ be the set of treated units, and similarly define
the set of control units as $B$. Denote the number of treated and control units as  $n_A=\left|A\right|$ and $n_B=\left|B\right|$, respectively, so that $n_A+n_B=n$. We introduce the notation $\bar{\cdot}_A $ and $\bar{\cdot}_B  $ to indicate averages of quantities over treated and control units respectively. For example, the average value of the potential outcomes and the covariates in the treatment group are denoted as
\[
\bar{a}_A =  n_A^{-1}\hbox{$\sum_{i \in A}$} a_i, \ \bar{\bx}_A  = n_A^{-1} \hbox{$\sum_{i \in A}$}\bx_i,
\]
respectively. Note that these are random quantities since the set $A$ is determined by the random treatment assignment. We will add a bar above an quantity to denote its average over the whole population, such as
\[
\bar{a} = n^{-1} \hbox{$\sum_{i=1}^{n}$} a_i, \ \bar{b}   = n^{-1} \hbox{$\sum_{i=1}^{n}$}b_i.
\]

Note that the averages of potential outcomes over the whole population are not considered random in this model, but they are unobservable. The parameter we are interested to infer is the average causal effect (ACE), that is, the average effect of the treatment over the whole population in the study defined by
\[
\ACE  = n^{-1} \hbox{$\sum_{i=1}^n$} ( a_i - b_i) = \bar{a} - \bar{b}  \triangleq \tau.
\]
The ACE, denoted by $\tau$, represents the difference between the average outcomes if everyone had received the treatment (for example, a training program), and the average outcomes if no one had received it. Our main goal is to outline conditions which guarantee efficiency gain of the ACE estimates through penalized regression adjustments including the Ridge, Elastic Net and Adaptive Lasso.

\subsection{Causal effect estimation with regression adjustment in low-dimension}

The ACE can be naturally estimated by a plug-in method, that is, replacing the population averages with the sample averages:
\begin{equation*}
\label{unadj}
\ACEhat_\unadj \define \bar{a}_A - \bar{b}_B,
\end{equation*}
This naive estimator is often called the difference-in-means estimator and it does not cooperate with covariates information. We use the subscript ``unadj" to indicate an estimator without regression adjustment. The foundational work in \cite{Neyman:1923} and the work in \cite{freedman2008regression_a} showed that, under a randomized assignment of treatment and certain moment conditions, $\ACEhat_\unadj$ is unbiased and asymptotically normal for $\ACE$ and a conservative procedure was proposed to estimate its variance.

However, the unadjusted estimator is not optimal in the sense that its asymptotic variance can be reduced by regression adjustment. A natural way of performing regression adjustment in low-dimension scenario is to regress the observed outcome $Y_i$ on the treatment indicator $T_i$ and the covariates $\bx_i$ with intercept, and the Ordinary Least Squares (OLS) coefficient on $T_i$, denoted by $\ACEhat_{\OLS}$,  is an estimator of ACE.  Freedman \cite{freedman2008regression_a,freedman2008randomization} critiqued the usages of $\ACEhat_{\OLS}$ by showing that, it is asymptotically unbiased and asymptotically normal, but its asymptotic variance could be larger than that of the unadjusted difference-in-means estimator. One reason is that the population OLS coefficient of regressing $a_i$ on $\bx_i$ and that of regressing $b_i$ on $\bx_i$ could be different which could not be recognized by one sample version OLS regression coefficient estimate.

The above discussion motivates to consider incorporating the treatment by covariates interaction $T_i \cdot \bx_i $ into the regression adjustment~\cite{lin2013}. As pointed out by~\cite{lin2013}, we can run a linear regression of $Y_i$ on $T_i$, the covariates $\bx_i$ and their interactions $T_i \cdot \bx_i $, and use the OLS estimate of the coefficient on $T_i$ as an estimator for ACE. This is equivalent to perform two linear regressions: regress $Y_i$ on $\bx_i$ in  the treatment and control group separately. We denote the two OLS coefficients on the covariates as $\hat\bbeta^{(a)}_{\OLS}$ and $\hat\bbeta^{(b)}_{\OLS}$. According to~\cite{lin2013}, the OLS coefficient of $T_i$ in the fully interacted regression for $Y_i$ is
\begin{align*}
\ACEhat_\adj =
\left[ \bar{a}_A - \left( \bar{\bx}_A  \right)^T \hat \bbeta^{(a)}_{\OLS} \right]  - \left[ \bar{b}_B - \left( \bar{\bx}_B  \right)^T \hat \bbeta^{(b)}_{\OLS} \right].
\end{align*}

The terms $ \bar{\bx}_A  $ and $ \bar{\bx}_B  $ represent the variation of the covariates in the treatment and control group relative to the full population, and the adjustment vectors adjust for these differences\footnote{Note that, we assume the population means of the covariates are zero, $\bar{\bx}=0$.}. The estimator $\ACEhat_\adj $ has some finite-sample bias, but \cite{lin2013} pointed out that it decays quickly at the rate of $1/n$ under fourth moment conditions on the potential outcomes and covariates when $p$ is fixed. Moreover, its asymptotic variance is never higher than the unadjusted estimator, and asymptotically conservative confidence interval for the true ACE can be constructed. In practice, the adjusted vectors $\hat\bbeta^{(a)}_{\OLS}$ and $\hat\bbeta^{(b)}_{\OLS}$ can be replaced by other estimation methods such as the Ridge, Lasso, Elastic Net and Adaptive Lasso when the number of covariates is large.

\subsection{Penalized regression adjustment in high-dimension}

In modern randomized experiments, the number of covariates can be very large, even larger than the sample size, especially when interactions between covariates are taken into account or using non-parametric smooth-splines. For example, in a clinical trial, the demographic and genetic information may be recorded about each patents; in tech industry, a huge mount of behavior data could be collected about each user. In this high-dimensional setting, the OLS estimator performs poorly due to overfitting. Most of the time, not all the covariates are relevant to the outcomes under consideration. Variable selection or penalized regression methods, such as the Ridge, Lasso and its variants, are useful for adjustment purpose. For example, we can use the Lasso to estimate the adjustment vector~\cite{bloniarz2015lasso}. In particular, let $\hat{\bbeta}_{\textnormal{Lasso}}^{(a)}$ and $\hat{\bbeta}_{\textnormal{Lasso}}^{(b)} $ be the Lasso estimates of the coefficients when regressing $Y_i$ on $\bx_i$ in the treatment and control group, respectively,
\begin{equation}
\begin{split}
\label{def-lasso-a}
\hat{\bbeta}_{\textnormal{Lasso}}^{(a)} = \argmin_{\bbeta} \biggl[
\frac{1}{2n_A} &\sum_{i \in A} \left( a_i - \bar{a}_A - (\bx_i -  \bar{\bx}_A )^T \bbeta \right)^2  + \lambda_a \sum_{j=1}^{p} |\beta_j| \biggr],
\end{split}
\end{equation}
\begin{equation}
\begin{split}
\label{def-lasso-b}
\hat{\bbeta}_{\textnormal{Lasso}}^{(b)} = \argmin_{\bbeta} \biggl[
\frac{1}{2n_B} &\sum_{i \in B} \left( b_i - \bar{b}_B - (\bx_i -  \bar{\bx}_B )^T \bbeta \right)^2 + \lambda_b \sum_{j=1}^{p} |\beta_j| \biggr],
\end{split}
\end{equation}
where $\lambda_a$ and $\lambda_b$ are regularization parameters for the Lasso which could be chosen by cross-validation. The Lasso adjusted ATE estimator is defined by
\begin{equation}
\begin{split}
\label{def-ate-lasso}
\hat{\tau}_{\Lasso} = &
\left[ \bar{a}_A - \left( \bar{\bx}_A - \bar{\bx} \right)^T\hat\bbeta^{(a)}_{\textnormal{Lasso}} \right]
 - \left[ \bar{b}_B - \left( \bar{\bx}_B - \bar{\bx} \right)^T\hat\bbeta^{(b)}_{\textnormal{Lasso}} \right].  \nonumber
\end{split} \nonumber
\end{equation}
Under the Neyman-Rubin model, and under fourth moment conditions on the potential outcomes and covariates and other appropriate conditions that guarantee the $l_1$ consistency of the Lasso, it was shown by \cite{bloniarz2015lasso} that $\ACEhat_{\Lasso} $ enjoys similar asymptotic and finite-sample advantages as $\ACEhat_{\OLS}$.

As aforementioned, the adjusted vector could be any penalized regression estimators beyond the Lasso. Let $\hat{\bbeta}_{\pen}^{(a)}$ and $\hat{\bbeta}_{\pen}^{(b)} $ be some penalized regression estimators,
\begin{equation}
\begin{split}
\label{def-pen-a}
\hat{\bbeta}_{\pen}^{(a)} = \argmin_{\bbeta} \biggl[
\frac{1}{2n_A} &\sum_{i \in A} \left( a_i - \bar{a}_A - (\bx_i -  \bar{\bx}_A )^T \bbeta \right)^2  + \sum_{j=1}^{p} p_{a}(\beta_j)  \biggr],
\end{split}
\end{equation}
\begin{equation}
\begin{split}
\label{def-pen-b}
\hat{\bbeta}_{\pen}^{(b)} = \argmin_{\bbeta} \biggl[
\frac{1}{2n_B} &\sum_{i \in B} \left( b_i - \bar{b}_B - (\bx_i -  \bar{\bx}_B )^T \bbeta \right)^2 + \sum_{j=1}^{p}  p_{b}(\beta_j) \biggr],
\end{split}
\end{equation}
where $p_a(t)$ and $p_b(t)$ are functions of penalty, such as, the Lasso:
$$p_a(t) =  \lambda_a  | t | ,$$
the Elastic Net:
$$p_a(t) =  \lambda_{a,1}  | t | + (1/2) \lambda_{a,2} t^2 ,$$ 
the SCAD:
$$
p_a(t) = \lambda_a \left\{ I(t\leq \lambda_a) + \frac{ (c \lambda_a - t)_{+} }{ (c-1)\lambda_a } I(t > \lambda_a) \right\}, \quad \textnormal{for some} \ c > 2,
$$
among many others. Similar arguments can be applied to $p_b(t)$. The general penalized regression adjusted ACE estimator is defined by
\begin{equation}
\begin{split}
\label{def-ate-pen}
\hat{\tau}_{\pen} = &
\left[ \bar{a}_A - \left( \bar{\bx}_A  \right)^T\hat\bbeta^{(a)}_{\pen} \right]
 - \left[ \bar{b}_B - \left( \bar{\bx}_B  \right)^T\hat\bbeta^{(b)}_{\pen} \right].  \nonumber
\end{split} \nonumber
\end{equation}

In the next section, we analyze this estimator under the Neyman-Rubin model and the finite population asymptotic framework, and provide conditions under which $\hat{\tau}_{\pen}$  is asymptotically normal and asymptotically more efficient than the unadjusted difference-in-means estimator.

\section{Theoretical results}\label{sec:theory}



\subsection{Notations}
We follow the notations in \cite{bloniarz2015lasso}. For a $p$-dimensional vector $\bbeta \in R^p$ and a subset $S \subset \{1,...,p\}$, denote $\beta_j$ the $j$-th component of $\bbeta$, $\bbeta_S = (\beta_j: j \in S)^T$,
$S^c$ the complement of $S$, and $|S|$ the cardinality of the set $S$. For column vectors $\mathbf{u}=(u_1,...,u_m)^T$, $\mathbf{v}=(v_1,...,v_m)^T$, let $\|\mathbf{u}\|_2 = \sqrt{ \sum_{i=1}^{m} u_i^2 }$, $\|\mathbf{u}\|_1 = \sum_{i=1}^{m} |u_i|$, $\|\mathbf{u}\|_\infty = \max_{i=1,\ldots,m} |u_i|$ and $\|\mathbf{u}\|_0 = | \{j: u_j \neq 0 \} |$ be the $l_2$, $l_1$, $l_{\infty}$ and $l_0$ norm, respectively. Denote $\sigma^2_{\mathbf{u}} = 1/(m-1) \sum_{i=1}^{m} (u_i - \bar{\mathbf{u}})^2$ be the variance of $\mathbf{u}$ and let $\sigma_{\mathbf{u},\mathbf{v}}$ be the covariance of $\mathbf{u}$ and $\mathbf{v}$. For a given $m\times m$ matrix $D$, let $\lambda_{\textnormal{min}} (D)$ and $\lambda_{\textnormal{max}} (D)$ be the smallest and largest eigenvalues of $D$
respectively, and let $D^{-1}$ be the inverse of the matrix $D$. Let $\stackrel{d}{\rightarrow}$ and $\stackrel{p}{\rightarrow}$ stand for convergence in distribution and in probability, respectively.

\subsection{Decomposition of the potential outcomes}
Similar to the theoretical study of the Lasso adjusted ACE estimator in \cite{bloniarz2015lasso}, we will also decompose the potential outcomes onto the space spanned by sparse linear combinations of the relevant covariates, that is, for $i = 1, \cdots, n$, let
\begin{equation}
\label{decom-a}
a_i = \bar{a} + ( \bx_i  )^T \bbeta^{(a)} + \err^{(a)}_i ,
\end{equation}
\begin{equation}
\label{decom-b}
b_i = \bar{b} + ( \bx_i )^T \bbeta^{(b)} + \err^{(b)}_i,
\end{equation}
where $ \bbeta^{(a)}$ and $ \bbeta^{(a)}$ are the projection coefficients which are the OLS coefficients of regressing population vectors of $a_i$'s and $b_i$'s on the relevant covariates in $\bx_i$; $\err^{(a)}_i$ and $\err^{(b)}_i$ are zero-mean approximation errors.  The projection coefficients may have different sparsity structure for different randomized experiments. Note that all the quantities in the decomposition are fixed, deterministic numbers, thus we have not assumed linear regression models. In order to pursue a theory for the penalized regression adjusted ACE estimator, we will add assumptions on the populations of $a_i$, $b_i$, $\bx_i$, the approximation errors $\err^{(a)}_i$ and $\err^{(b)}_i$, and the sparsity of $ \bbeta^{(a)}$ and $ \bbeta^{(b)}$.

\subsection{Asymptotic normality of penalized regression adjusted ACE estimator}

We require the following assumptions on the potential outcomes and covariates. The first set of assumptions (\ref{cond:stability}-\ref{cond:limit}) are similar to those assumed in \cite{lin2013} and \cite{bloniarz2015lasso}.
\begin{condition}\label{cond:stability}
Stability of treatment assignment probability. For some $p_A \in \left(0,1\right)$.
\begin{eqnarray}
\label{cond:treat-prob}
n_A/n 
\rightarrow p_A, \ \textnormal{as} \ n \rightarrow \infty.
\end{eqnarray}
\end{condition}

\begin{condition} \label{cond:moment}
The centered moment conditions.
There exists a fixed constant $L>0$ such that, for all $n=1,2,...$ and $j=1,...,p$,
\begin{equation}
\label{cond:xmoment}
 \hbox{$n^{-1} \sum_{i=1}^{n}$} \left( x_{ij} \right)^4 \leq L;
\end{equation}
\begin{equation}
\label{cond:errmoment}
  \hbox{$n^{-1} \sum_{i=1}^{n}$} \left(e_i^{(a)} \right)^4 \leq L; \ \  \hbox{$n^{-1} \sum_{i=1}^{n}$} \left( e_i^{(b)} \right)^4 \leq L.
\end{equation}
\end{condition}

\begin{condition} \label{cond:limit}
The population means $\hbox{$n^{-1} \sum_{i=1}^{n}$} \left( e_i^{(a)} \right)^2$,  $\hbox{$n^{-1} \sum_{i=1}^{n}$} \left( e_i^{(b)} \right)^2$ and $\hbox{$n^{-1} \sum_{i=1}^{n}$} e_i^{(a)} e_i^{(b)}$ converge to finite limits.
\end{condition}

\begin{thm}\label{thm:general}
Under assumptions \ref{cond:stability} to \ref{cond:limit}, suppose that the penalized estimators $\hat \bbeta^{a}_{\pen}$ and $\hat \bbeta^{b}_{\pen}$ are mean risk consistent in the following sence:
\begin{equation}
\label{eqn:risk-consistency}
\sqrt{n} \left( \bar{\bx}_{A} \right)^T \left( \hat \bbeta^{(a)}_{\pen}   - \bbeta^{(a)}   \right)  \stackrel{p}{\rightarrow} 0 , \quad  \sqrt{n} \left( \bar{\bx}_{B} \right)^T \left( \hat \bbeta^{(b)}_{\pen}   - \bbeta^{(b)}   \right)  \stackrel{p}{\rightarrow} 0.
\end{equation}
Then the penalized regression adjusted ACE estimator $\ACEhat_{\pen}$ is asymptotically unbiased and asymptotically normal, that is,
\begin{eqnarray} \label{ACE-conv}
\sqrt{n}  \left( \ACEhat_{\pen}  - \tau \right) \stackrel{d}{\rightarrow} \mathcal{N}\left( 0, \ATEsig^2 \right)
\end{eqnarray}
where
\begin{equation} \label{sig-def-theorem}
\ATEsig^2 = \lim_{n\rightarrow \infty}\left[\frac{1}{p_A} \sigma^2_{e^{(a)}} + \frac{1}{1-p_A}\sigma^2_{e^{(b)}} - \sigma^2_{e^{(a)}-e^{(b)}}\right].
\end{equation}
Moreover, the asymptotic variance, $\ATEsig^2$, is no greater than that of the difference-in-means estimator and the difference is $(1/(p_A(1-p_A))) \Delta$ where
\begin{equation}
\Delta = - \lim_{n \rightarrow \infty} \sigma^2_{X \bbeta_E} \leq 0, \quad \bbeta_E = (1-p_A)\bbeta^{(a)} + p_A \bbeta^{(b)}.
\end{equation}

\end{thm}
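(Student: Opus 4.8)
The plan is to show that $\ACEhat_{\pen}$ is asymptotically equivalent to an infeasible \emph{oracle} estimator that uses the true projection coefficients $\bbeta^{(a)},\bbeta^{(b)}$ in place of $\hat\bbeta^{(a)}_{\pen},\hat\bbeta^{(b)}_{\pen}$, and then to recognize that oracle estimator as an ordinary difference-in-means estimator applied to the residual potential outcomes. Concretely, set
\[
\tilde\tau = \left[ \bar{a}_A - (\bar{\bx}_A)^T \bbeta^{(a)} \right] - \left[ \bar{b}_B - (\bar{\bx}_B)^T \bbeta^{(b)} \right].
\]
Subtracting gives $\ACEhat_{\pen} - \tilde\tau = -(\bar{\bx}_A)^T(\hat\bbeta^{(a)}_{\pen} - \bbeta^{(a)}) + (\bar{\bx}_B)^T(\hat\bbeta^{(b)}_{\pen} - \bbeta^{(b)})$, so the risk-consistency hypothesis \eqref{eqn:risk-consistency} yields $\sqrt{n}(\ACEhat_{\pen} - \tilde\tau) \stackrel{p}{\rightarrow} 0$ at once. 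By Slutsky's theorem it then suffices to establish the limit law for $\sqrt{n}(\tilde\tau - \tau)$.

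Next I would substitute the decompositions \eqref{decom-a}--\eqref{decom-b} into $\tilde\tau$. Averaging $a_i = \bar{a} + \bx_i^T\bbeta^{(a)} + e^{(a)}_i$ over $A$ gives $\bar{a}_A = \bar{a} + (\bar{\bx}_A)^T\bbeta^{(a)} + \bar{e}^{(a)}_A$, and likewise over $B$, so the projection terms cancel and
\[
\tilde\tau - \tau = \bar{e}^{(a)}_A - \bar{e}^{(b)}_B .
\]
This is exactly the difference-in-means estimator for a randomized experiment whose treatment and control potential outcomes are $e^{(a)}_i$ and $e^{(b)}_i$, an experiment whose own ACE is $\bar{e}^{(a)} - \bar{e}^{(b)} = 0$ (centering $\bar{\bx}=0$ forces $\bar{e}^{(a)} = \bar{e}^{(b)} = 0$). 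I would therefore invoke the finite-population central limit theorem for the difference-in-means estimator under the Neyman--Rubin model, as established in \cite{Neyman:1923,freedman2008regression_a,lin2013}, applied to these residual outcomes: under Condition \ref{cond:stability} and the bounded fourth-moment Condition \ref{cond:moment} for $e^{(a)}_i,e^{(b)}_i$, together with the convergence of second moments in Condition \ref{cond:limit}, $\sqrt{n}(\bar{e}^{(a)}_A - \bar{e}^{(b)}_B)$ is asymptotically $\mathcal{N}(0,\ATEsig^2)$ with Neyman variance $n[\sigma^2_{e^{(a)}}/n_A + \sigma^2_{e^{(b)}}/n_B - \sigma^2_{e^{(a)}-e^{(b)}}/n]$, whose limit is precisely \eqref{sig-def-theorem} after using $n/n_A \rightarrow 1/p_A$ and $n/n_B \rightarrow 1/(1-p_A)$. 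I expect verifying the Lindeberg/fourth-moment hypotheses of this finite-population CLT for the residuals to be the main technical step, though it follows directly from Conditions \ref{cond:moment}--\ref{cond:limit}.

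For the variance comparison I would exploit that $\bbeta^{(a)},\bbeta^{(b)}$ are population OLS coefficients, so the normal equations give $\sum_i \bx_i e^{(a)}_i = \sum_i \bx_i e^{(b)}_i = 0$; combined with $\bar{e}^{(a)} = \bar{e}^{(b)} = 0$ this makes $X\bbeta^{(a)}$ and $e^{(a)}$ (and likewise the control and the difference) uncorrelated over the finite population, giving the Pythagorean identities $\sigma^2_a = \sigma^2_{X\bbeta^{(a)}} + \sigma^2_{e^{(a)}}$, $\sigma^2_b = \sigma^2_{X\bbeta^{(b)}} + \sigma^2_{e^{(b)}}$ and $\sigma^2_{a-b} = \sigma^2_{X(\bbeta^{(a)}-\bbeta^{(b)})} + \sigma^2_{e^{(a)}-e^{(b)}}$. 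Writing the asymptotic variance $\ATEunadj^2$ of the unadjusted difference-in-means estimator in the same Neyman form and subtracting, the residual terms cancel and
\[
\ATEsig^2 - \ATEunadj^2 = \lim_{n\rightarrow\infty}\left[ -\tfrac{1}{p_A}\sigma^2_{X\bbeta^{(a)}} - \tfrac{1}{1-p_A}\sigma^2_{X\bbeta^{(b)}} + \sigma^2_{X(\bbeta^{(a)}-\bbeta^{(b)})} \right].
\]
Finally I would expand the right-hand side in terms of the variances and covariance of $X\bbeta^{(a)}$ and $X\bbeta^{(b)}$ and complete the square; a direct computation shows it equals $-[p_A(1-p_A)]^{-1}\lim_{n}\sigma^2_{X\bbeta_E}$ with $\bbeta_E = (1-p_A)\bbeta^{(a)} + p_A\bbeta^{(b)}$, which is nonpositive since a variance is nonnegative. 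This simultaneously yields $\ATEsig^2 \le \ATEunadj^2$ and the stated form $\Delta = -\lim_{n}\sigma^2_{X\bbeta_E} \le 0$ for the difference, understood under convergence of the covariate-projection quantities implicit in the statement.
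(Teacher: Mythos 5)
Your proposal is correct and follows essentially the same route as the paper: the oracle estimator $\tilde\tau$ is just a relabeling of the paper's decomposition of $\sqrt{n}(\ACEhat_{\pen}-\tau)$ into the residual difference-in-means term (handled by the finite-population CLT of Freedman applied to $e^{(a)},e^{(b)}$) plus the remainder killed by the risk-consistency hypothesis, and your variance comparison via orthogonality, the Pythagorean identities, and completing the square to reach $-\sigma^2_{X\bbeta_E}/(p_A(1-p_A))$ is exactly the paper's computation.
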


Theorem \ref{thm:general} follows from an intermediate results (with certain modification) in the proof of the work \cite{bloniarz2015lasso}. We give an
explicit theorem here to emphasize that this result holds for a very general class of penalized regression adjusted ACE estimator. In the following section, we will apply this theorem to study the theoretical properties of several penalized regression adjustment -- the Ridge, Elastic Net and Adaptive Lasso.

Since we cannot observe $a_i$ and $b_i$ simultaneously, the variance of their difference, $\sigma^2_{e^{(a)}-e^{(b)}}$, is not identifiable. Thus, the asymptotic variance $\ATEsig^2 $ cannot be estimated consistently. However, we can provide a Neyman-type conservative variance estimate. Let $\hat \sigma^2_{e^{(a)}}$ and $\hat \sigma^2_{e^{(b)}}$ be consistent estimators of $\sigma^2_{e^{(a)}}$ and $\sigma^2_{e^{(b)}}$ respectively (for example, using the residual sum of squares), then $\ATEsig^2 $ can be conservatively estimated by
\[
\frac{n}{n_A} \hat \sigma^2_{e^{(a)}} + \frac{n}{n_B} \hat \sigma^2_{e^{(b)}} .
\]

\subsection{Ridge adjusted ACE estimator}
In this section, we apply the general result of Theorem~\ref{thm:general} to the Ridge penalty which is defined by
\[
p_a( \beta_j ) =   \dfrac{1}{2} \lambda_{a,2}  \beta_j^2 ; \quad p_b(\beta_j ) =   \dfrac{1}{2} \lambda_{b,2} \beta_j^2.
\]
The resulting adjusted vectors are denoted by $\hat \bbeta^{(a)}_{\Ridge}$, $\hat \bbeta^{(b)}_{\Ridge}$ respectively, and the Ridge adjusted ACE estimator is 
\[ \hat{\tau}_{\Ridge}  = \left[ \bar{a}_A - \left( \bar{\bx}_A  \right)^T\hat\bbeta^{(a)}_{\Ridge} \right]
- \left[ \bar{b}_B - \left( \bar{\bx}_B  \right)^T\hat\bbeta^{(b)}_{\Ridge} \right].\]

We derive the mean risk consistency of the Ridge adjusted vector under the following assumptions. Define $\delta_n$ to be the maximum covariance between the error terms and the covariates.
\begin{equation}\label{def:delta}
\delta_n = \max_{z=a,b} \left\{ \max_j  \left|\frac{1}{n} \sum_{i=1}^{n}  x_{ij}  e^{(z)}_i  \right| \right\}.
\end{equation}

\begin{condition} \label{cond:scaling-ridge}
Suppose that
\begin{equation}
\label{cond:delta_n-ridge}
\delta_n = o\left(  \frac{ 1 }{ p \sqrt{ \log p } }  \right) ,
\end{equation}
\begin{equation}
\label{cond:s-scaling-ridge}
(p \log p)/{\sqrt n} = o(1).
\end{equation}
\end{condition}
\begin{remark}
The scaling assumption \eqref{cond:s-scaling-ridge} allows the number of covariates $p$ to converge to infinity as $n \rightarrow \infty$, but at a rate much slower than $\sqrt{n}$. We conjecture that this assumption can be weaken to at least $p/\sqrt{n} \rightarrow \infty$ by using more sophisticated technique. In practice, the Ridge can be applied even when $p$ is comparable to or larger than $n$. We leave the theoretical analysis of Ridge adjustment for higher dimension (for example $p/n \rightarrow c \in (0,1)$) to future research.
\end{remark}

\begin{condition}\label{cond:smallest-eigen}
For the Ridge Gram matrices 
\[ \Sigma^{(z)}_{\Ridge} = \Sigma + \lambda_{z, 2} I, \quad z= a,b, \] 
where $I$ is a $p \times p$ identity matrix and $\Sigma = X^TX/n$ is the Gram matrix, suppose that their smallest eigenvalues are bounded away from $0$, that is, there exists a constant $\Lambda_{\min} > 0$ such that
\begin{equation*}
\min\{ \lambda_{\min}\left(\Sigma^{(a)}_{\Ridge}  \right),  \lambda_{\min}\left(\Sigma^{(b)}_{\Ridge}  \right)\}    \geq \Lambda_{\min}.
\end{equation*}
\end{condition}

\begin{condition}\label{cond:tuning-ridge}
Assume that the regularization parameters $ \lambda_{a,2}$ and $ \lambda_{b,2} $ of the Ridge satisfy
\begin{equation}
\label{cond:lambda-a-2-ridge}
\min\{ \lambda_{a,2} || \bbeta^{(a)}  ||_1 ,   \lambda_{b,2} || \bbeta^{(b)}  ||_1  \} = O\left( p \sqrt{\frac{\log p}{n} } \right).
\end{equation}
\end{condition}

\begin{thm}\label{thm:ridge}
Under assumptions \ref{cond:stability} to \ref{cond:tuning-ridge}, the Ridge adjusted vectors $\hat \bbeta^{(a)}_{Ridge}$ and $\hat \bbeta^{(b)}_{\Ridge}$ satisfy
\begin{equation*}
\sqrt{n} \left( \bar{\bx}_{A} \right)^T \left( \hat \bbeta^{(a)}_{\Ridge}   - \bbeta^{(a)}   \right)  \stackrel{p}{\rightarrow} 0 , \quad  \sqrt{n} \left( \bar{\bx}_{B} \right)^T \left( \hat \bbeta^{(b)}_{\Ridge}   - \bbeta^{(b)}   \right)  \stackrel{p}{\rightarrow} 0.
\end{equation*}
Then, $\ACEhat_{\Ridge}$ is asymptotically normal, that is,
\begin{eqnarray} \label{ACE-conv}
\sqrt{n}  \left( \ACEhat_{\Ridge}  - \tau \right) \stackrel{d}{\rightarrow} \mathcal{N}\left( 0, \sigma^2_{\pen}  \right).
\end{eqnarray}
\end{thm}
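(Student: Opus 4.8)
My plan is to prove the two asserted conclusions in order: first the mean risk consistency of the Ridge adjusted vectors (the two displayed limits), and then the asymptotic normality, which is essentially free. Indeed, the two limits are exactly hypothesis \eqref{eqn:risk-consistency} of Theorem~\ref{thm:general} with $\hat\bbeta^{(a)}_{\pen}=\hat\bbeta^{(a)}_{\Ridge}$ and $\hat\bbeta^{(b)}_{\pen}=\hat\bbeta^{(b)}_{\Ridge}$; so once they are established, the stated convergence $\sqrt n(\ACEhat_{\Ridge}-\tau)\stackrel{d}{\rightarrow}\mathcal N(0,\sigma^2_{\pen})$ follows immediately by invoking Theorem~\ref{thm:general}. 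The whole task therefore reduces to showing $\sqrt n\,(\bar\bx_A)^T(\hat\bbeta^{(a)}_{\Ridge}-\bbeta^{(a)})\stackrel{p}{\rightarrow}0$; the statement for group $B$ is symmetric, so I would only write out the treated group.

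The starting point is the closed form of the Ridge estimator. Writing $\hat\Sigma_A=n_A^{-1}\sum_{i\in A}(\bx_i-\bar\bx_A)(\bx_i-\bar\bx_A)^T$ for the sample Gram matrix of the treated group and substituting the exact identity $a_i-\bar a_A=(\bx_i-\bar\bx_A)^T\bbeta^{(a)}+(e_i^{(a)}-\bar e^{(a)}_A)$ (obtained by averaging \eqref{decom-a} over $A$ and subtracting) into the normal equations, I get the bias--variance split
\[
\hat\bbeta^{(a)}_{\Ridge}-\bbeta^{(a)}=-\lambda_{a,2}\left(\hat\Sigma_A+\lambda_{a,2} I\right)^{-1}\bbeta^{(a)}+\left(\hat\Sigma_A+\lambda_{a,2} I\right)^{-1}\hat d_A,
\]
where $\hat d_A=n_A^{-1}\sum_{i\in A}(\bx_i-\bar\bx_A)(e_i^{(a)}-\bar e^{(a)}_A)$. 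I would then record three finite-population concentration facts, each proved from Condition~\ref{cond:moment} and the sampling-without-replacement structure by Chebyshev bounds: (i) $\|\bar\bx_A\|_2=O_p(\sqrt{p/n})$, since each coordinate is a mean-zero sample mean of variance $O(1/n)$; (ii) $\hat d_A$ concentrates around the population cross-moment $n^{-1}\sum_i\bx_i e_i^{(a)}$, whose entries are $O(\delta_n)$ by \eqref{def:delta}, giving $\|\hat d_A\|_2=O_p(\sqrt p\,(\delta_n+\sqrt{\log p/n}))$; and (iii) $\|\hat\Sigma_A-\Sigma\|_{\textnormal{op}}\stackrel{p}{\rightarrow}0$, which with Condition~\ref{cond:smallest-eigen} yields $\|(\hat\Sigma_A+\lambda_{a,2}I)^{-1}\|_{\textnormal{op}}\le 2/\Lambda_{\min}$ with probability tending to one.

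For the noise contribution $\sqrt n\,(\bar\bx_A)^T(\hat\Sigma_A+\lambda_{a,2}I)^{-1}\hat d_A$, Cauchy--Schwarz with (i)--(iii) bounds it by $O_p\!\big(p(\delta_n+\sqrt{\log p/n})\big)$, which is $o_p(1)$ because Condition~\ref{cond:scaling-ridge} forces both $p\delta_n\to0$ and $p\sqrt{\log p/n}=(p\log p/\sqrt n)/\sqrt{\log p}\to0$. The bias contribution $-\sqrt n\,\lambda_{a,2}(\bar\bx_A)^T(\hat\Sigma_A+\lambda_{a,2}I)^{-1}\bbeta^{(a)}$ is the delicate part: the same crude Cauchy--Schwarz bound leaves a stray factor $\sqrt p$ and does not close under the scaling. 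The trick I would use is to replace the random inverse by the deterministic $(\Sigma+\lambda_{a,2}I)^{-1}=(\Sigma^{(a)}_{\Ridge})^{-1}$. For the leading term, $w=(\Sigma+\lambda_{a,2}I)^{-1}\bbeta^{(a)}$ is a \emph{fixed} vector, so $(\bar\bx_A)^Tw$ is a genuine finite-population average with variance $\tfrac{n_B}{n_A n}\,w^T\Sigma w\le \tfrac{n_B}{n_An}\,(\bbeta^{(a)})^T(\Sigma+\lambda_{a,2}I)^{-1}\bbeta^{(a)}=O(\|\bbeta^{(a)}\|_2^2/n)$; the key inequality $w^T\Sigma w\le (\bbeta^{(a)})^T(\Sigma+\lambda_{a,2}I)^{-1}\bbeta^{(a)}$ removes any dependence on $\|\Sigma\|_{\textnormal{op}}$. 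This gives $(\bar\bx_A)^Tw=O_p(\|\bbeta^{(a)}\|_2/\sqrt n)$, so after multiplying by $\sqrt n\,\lambda_{a,2}$ what remains is $O_p(\lambda_{a,2}\|\bbeta^{(a)}\|_1)=o(1)$ by Condition~\ref{cond:tuning-ridge}. The residual term with $(\hat\Sigma_A+\lambda_{a,2}I)^{-1}-(\Sigma+\lambda_{a,2}I)^{-1}=-(\hat\Sigma_A+\lambda_{a,2}I)^{-1}(\hat\Sigma_A-\Sigma)(\Sigma+\lambda_{a,2}I)^{-1}$ is then controlled by bounding the contracted deviation $\|(\hat\Sigma_A-\Sigma)w\|_2$ directly.

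I expect the main obstacle to be precisely this residual bias term and the supporting control of $\hat\Sigma_A-\Sigma$. The naive route, bounding the residual by $\|\hat\Sigma_A-\Sigma\|_{\textnormal{op}}$ through an entrywise-to-operator estimate of order $p\sqrt{\log p/n}$, is too lossy and does not vanish under $(p\log p)/\sqrt n=o(1)$; one must instead exploit that $w$ is deterministic and bound the variance of the contracted vector $(\hat\Sigma_A-\Sigma)w$ coordinatewise, obtaining an $O_p(\sqrt{p/n}\,\|\bbeta^{(a)}\|_2)$ rate that does close. Making this contracted concentration rigorous under sampling without replacement and only the fourth-moment control of Condition~\ref{cond:moment} (rather than sub-Gaussianity), and verifying that it, together with (iii), suffices for the whole bias term, is the technical heart of the argument, and it is exactly where the dimension restriction in Condition~\ref{cond:scaling-ridge} is genuinely consumed. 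The remaining manipulations are routine applications of Chebyshev's inequality and the finite-population variance and covariance formulas.
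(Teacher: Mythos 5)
Your overall architecture matches the paper's: reduce everything to the two mean-risk-consistency limits, invoke Theorem~\ref{thm:general}, and attack the treated group via the closed-form bias--variance split of $\hat\bbeta^{(a)}_{\Ridge}-\bbeta^{(a)}$. Your noise term is handled correctly and is essentially equivalent to the paper's: your $l_2$--$l_2$ Cauchy--Schwarz pairing $\shortnorm{\bar{\bx}_A}_2\,\shortnorm{(\hat\Sigma_A+\lambda_{a,2}I)^{-1}\hat d_A}_2$ yields the same $O_p\bigl(p(\delta_n+\sqrt{\log p/n})\bigr)$ rate that the paper obtains from the $l_\infty$--$l_1$ pairing $\sqrt{n}\,\shortnorm{\bar{\bx}_A}_\infty\shortnorm{\hat\bbeta_{\Ridge}-\bbeta}_1$, and both close under Condition~\ref{cond:scaling-ridge} and Lemma~\ref{lem:concentrationofterms}. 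The genuine gap is in your residual bias term. Your argument requires $\shortnorm{(\hat\Sigma_A-\Sigma)w}_2=O_p\bigl(\sqrt{p/n}\,\shortnorm{\bbeta^{(a)}}_2\bigr)$ for $w=(\Sigma+\lambda_{a,2}I)^{-1}\bbeta^{(a)}$, and you explicitly defer its proof as ``the technical heart''; but this rate does not follow from Condition~\ref{cond:moment}. Summing the finite-population variances of the coordinates of $(\hat\Sigma_A-\Sigma)w$ gives a bound of order $n^{-1}\cdot n^{-1}\sum_{i}\shortnorm{\bx_i}_2^2(\bx_i^Tw)^2$, which under coordinatewise fourth-moment control alone is generically of order $p^2\shortnorm{w}_2^2/n$, not $p\shortnorm{w}_2^2/n$; obtaining your rate would need something like $\max_i\shortnorm{\bx_i}_2^2=O(p)$ together with a bounded $\lambda_{\max}(\Sigma)$ (Condition~\ref{cond:largest}, which is not among this theorem's hypotheses). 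With the attainable rate $O_p(p\shortnorm{w}_2/\sqrt n)$, your residual is only $O_p(p^{5/2}\sqrt{\log p}/n)$, which $(p\log p)/\sqrt n\to 0$ does not force to zero (take $p=n^{0.4}$). So the step you yourself identify as the crux is asserted rather than proved, and as stated it appears unprovable under the theorem's assumptions.

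For contrast, the paper never meets this obstacle: it bounds the full $l_1$ error $\shortnorm{\hat\bbeta_{\Ridge}-\bbeta}_1$ and the penalization bias enters only as $\shortnorm{\lambda_{a,2}\bbeta^{(a)}}_1$, which Condition~\ref{cond:tuning-ridge} controls directly, with no contracted concentration of $\hat\Sigma_A-\Sigma$ required. You should be aware, however, that this simplification rests on the paper writing the bias term as $-\lambda_{a,2}\bbeta^{(a)}$, whereas the exact identity is $(\hat\Sigma_A+\lambda_{a,2}I)^{-1}\hat\Sigma_A\bbeta^{(a)}=\bbeta^{(a)}-\lambda_{a,2}(\hat\Sigma_A+\lambda_{a,2}I)^{-1}\bbeta^{(a)}$, leaving the random inverse in place --- which is exactly the term your proposal is struggling with, so your instinct that this is the delicate part is sound. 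If you want to repair your route, the fix should exploit Condition~\ref{cond:tuning-ridge} on $\lambda_{a,2}\shortnorm{\bbeta^{(a)}}_1$ directly (e.g., by bounding $\shortnorm{\lambda_{a,2}(\hat\Sigma_A+\lambda_{a,2}I)^{-1}\bbeta^{(a)}}_1$ through the eigenvalue bound of Lemma~\ref{lem:eigen-ridge} and the $l_1$ budget) rather than routing the argument through the deviation $\hat\Sigma_A-\Sigma$.
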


\begin{remark}
Setting $\lambda_{a,2}=\lambda_{b,2}=0$, Theorem~\ref{thm:ridge} becomes the asymptotic normality of the Ordinary Least Squares (OLS) adjusted ACE estimator. It extends the result of \cite{lin2013} from fixed $p$ to diverging $p$ setting ($p \log p / \sqrt{n} \rightarrow 0$).
\end{remark}

\begin{remark}
Similar to linear regression model, the Ridge adjustment is better than the OLS adjustment when there exists strong multi-collinearity, that is, the smallest eigenvalue of the Gram matrix $\Sigma $ is close to 0. In this case, we can chose $ \lambda_{a,2}$ and $ \lambda_{b,2} $ such that assumption~\ref{cond:smallest-eigen} still holds, but on the other hand, they cannot be too large due to assumption~\ref{cond:tuning-ridge}. In practice, the tuning parameters can be chosen by cross validation.
\end{remark}

As mentioned in the previous section, the asymptotic variance cannot be estimated consistently since $a_i$ and $b_i$ are not observed simultaneously. However, we can give a Neyman-type conservative estimate of the variance based on the residual sum of squares. Let
\begin{equation}
\label{var_estim_a}
 \hat \sigma^2_{e^{(z)},\Ridge} = \frac{1}{n_A - 1 } \sum_{i \in A} \left( z_i - \bar{z}_A - (\bx_i - \bar{\bx}_A )^T \hat \bbeta^{(z)}_{\Ridge} \right)^2, \quad z = a, b. \nonumber
\end{equation}

Define the estimator of the asymptotic variance of $\sqrt{n}(\hat \tau_{\Ridge}  - \tau)$ as follows:
\begin{equation} \label{sigma-lasso}
\hat \sigma^2_\textnormal{Ridge} = \frac{n}{n_A}  \hat \sigma^2_{e^{(a)},\Ridge} + \frac{n}{n_B} \hat \sigma^2_{e^{(b)},\Ridge}.\nonumber
\end{equation}

\begin{condition}\label{cond:largest}
The largest eigenvalue of the Gram matrix $\Sigma = X^TX/n$ is bounded away from $\infty$, that is, there exists a constant $\Lambda_{\max} < \infty$ such that
\begin{equation*}
\lambda_{\max}\left(\Sigma\right)\leq \Lambda_{\max}.
\end{equation*}
\end{condition}

\begin{thm}
\label{thm:variance-ridge}
Under assumptions \ref{cond:stability} to \ref{cond:largest}, $\hat \sigma^2_{\Ridge}$ converges in probability to
\begin{equation*}
\frac{1}{p_A} \mathop {\lim}\limits_{n \rightarrow \infty } \sigma^2_{e^{(a)}} + \frac{1}{1-p_A} \mathop {\lim}\limits_{n \rightarrow \infty } \sigma^2_{e^{(b)}},
\end{equation*}
which is greater than or equal to the asymptotic variance of $\sqrt{n}( \ACEhat_{\Ridge}  - \tau )$. The difference is $\mathop {\lim}\limits_{n \rightarrow \infty } \sigma^2_{e^{(a)}-e^{(b)}} $.
\end{thm}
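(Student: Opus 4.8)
The plan is to reduce the statement to showing that each residual-based variance estimator converges to the corresponding population error variance, i.e. $\hat\sigma^2_{e^{(a)},\Ridge}\stackrel{p}{\rightarrow}\lim_{n\to\infty}\sigma^2_{e^{(a)}}$ and $\hat\sigma^2_{e^{(b)},\Ridge}\stackrel{p}{\rightarrow}\lim_{n\to\infty}\sigma^2_{e^{(b)}}$. Once this is in hand, the convergence of $\hat\sigma^2_{\Ridge}$ follows from Slutsky's theorem together with Condition~\ref{cond:stability}, which supplies $n/n_A\to 1/p_A$ and $n/n_B\to 1/(1-p_A)$. I will argue the treatment group in detail; the control group is symmetric.

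First I would rewrite the fitted residual so as to separate the true approximation error from the estimation error. Using the decomposition~\eqref{decom-a}, $a_i=\bar a+\bx_i^T\bbeta^{(a)}+e_i^{(a)}$, and the induced identity $\bar a_A=\bar a+(\bar{\bx}_A)^T\bbeta^{(a)}+\bar e^{(a)}_A$ with $\bar e^{(a)}_A=n_A^{-1}\sum_{i\in A}e_i^{(a)}$, the residual takes the form
\[
a_i-\bar a_A-(\bx_i-\bar{\bx}_A)^T\hat\bbeta^{(a)}_{\Ridge}=(e_i^{(a)}-\bar e^{(a)}_A)-(\bx_i-\bar{\bx}_A)^T\hat\Delta,\qquad \hat\Delta\define\hat\bbeta^{(a)}_{\Ridge}-\bbeta^{(a)}.
\]
Squaring and averaging over $A$ then decomposes $\hat\sigma^2_{e^{(a)},\Ridge}$ into three pieces: the sample variance of $\{e_i^{(a)}\}_{i\in A}$, a cross term $-2(n_A-1)^{-1}\sum_{i\in A}(e_i^{(a)}-\bar e^{(a)}_A)(\bx_i-\bar{\bx}_A)^T\hat\Delta$, and a quadratic term $\hat\Delta^T\hat\Sigma_A\hat\Delta$ with $\hat\Sigma_A=(n_A-1)^{-1}\sum_{i\in A}(\bx_i-\bar{\bx}_A)(\bx_i-\bar{\bx}_A)^T$.

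For the leading sample-variance piece I would invoke a finite-population law of large numbers for sampling without replacement: since $A$ is a simple random sample of size $n_A$ from the population and the fourth moments of $e^{(a)}_i$ are uniformly bounded by Condition~\ref{cond:moment}, the sample variance over $A$ converges in probability to the population variance $\sigma^2_{e^{(a)}}$, whose limit exists by Condition~\ref{cond:limit}. The remaining two pieces I would show are $o_p(1)$. For the quadratic term, Condition~\ref{cond:largest} bounds $\lambda_{\max}(\Sigma)$ and a concentration argument transfers this to $\lambda_{\max}(\hat\Sigma_A)$, so that $\hat\Delta^T\hat\Sigma_A\hat\Delta\le\lambda_{\max}(\hat\Sigma_A)\shortnorm{\hat\Delta}_2^2\le\lambda_{\max}(\hat\Sigma_A)\shortnorm{\hat\Delta}_1^2$; the $l_1$-consistency of the Ridge adjusted vector established en route to Theorem~\ref{thm:ridge} gives $\shortnorm{\hat\Delta}_1=o_p(1)$, so this term vanishes. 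For the cross term, Hölder's inequality bounds it by $\shortnorm{(n_A-1)^{-1}\sum_{i\in A}(e_i^{(a)}-\bar e^{(a)}_A)(\bx_i-\bar{\bx}_A)}_\infty\,\shortnorm{\hat\Delta}_1$, where the empirical covariance vector is controlled by $\delta_n$ from~\eqref{def:delta} plus a sampling fluctuation, both $o_p(1)$; combined again with $\shortnorm{\hat\Delta}_1=o_p(1)$ this is $o_p(1)$.

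Assembling the two groups yields $\hat\sigma^2_{\Ridge}\stackrel{p}{\rightarrow}\frac{1}{p_A}\lim_{n\to\infty}\sigma^2_{e^{(a)}}+\frac{1}{1-p_A}\lim_{n\to\infty}\sigma^2_{e^{(b)}}$. Comparing with the asymptotic variance $\ATEsig^2$ from Theorem~\ref{thm:general}, the difference equals $\lim_{n\to\infty}\sigma^2_{e^{(a)}-e^{(b)}}$, which is nonnegative as the limit of a variance, so the estimator is conservative. The step I expect to be the main obstacle is the finite-population concentration of the sample variance and covariance functionals over the without-replacement sample $A$, since the $T_i$ are not independent; Conditions~\ref{cond:moment} and~\ref{cond:limit} are precisely what make the associated Neyman-type variance bounds go through, while the eigenvalue Condition~\ref{cond:largest} is what allows the quadratic estimation-error term to be absorbed once $\shortnorm{\hat\Delta}_1=o_p(1)$ is available.
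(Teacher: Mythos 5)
Your proposal is correct and follows essentially the same route as the paper, which proves this result by adapting the proof of Theorem~\ref{conservative_variance}: decompose the residual via~\eqref{decom-a}, expand the sum of squares into the sample variance of $e^{(a)}$ plus a quadratic and a cross term, handle the leading term with the finite-population law of large numbers (Lemma~\ref{lem:subsamplemean}), and kill the remainder using $\shortnorm{\hat\bbeta_{\Ridge}-\bbeta}_1=o_p(1)$ from the proof of Theorem~\ref{thm:ridge}. The only cosmetic differences are that you bound the quadratic term by $\lambda_{\max}(\hat\Sigma_A)\shortnorm{\hat\Delta}_1^2$ where the paper uses $\shortnorm{\hat\Sigma_A}_\infty\shortnorm{\hat\Delta}_1^2$, and that for Ridge the degrees-of-freedom factor is simply $n_A/(n_A-1)\rightarrow 1$, so the paper's Lemma~\ref{lem:num-of-selcted-varibles} step is not needed.
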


\begin{remark}
We may construct better confidence interval for the ACE if we appropriately adjust the degrees of freedom of the residual sum of squares, but it is difficult to define and consistently estimate the degrees of freedom for Ridge estimators, especially in high dimensional scenario when $p > n$. We leave this direction to future research.
\end{remark}

\subsection{Elastic Net adjusted ACE estimator}
In this section, we apply Theorem~\ref{thm:general} to the Elastic Net, and provide conditions under which the Elastic Net adjusted ACE estimator is asymptotically normal and is asymptotically more efficient than the unadjusted estimator. 

According to \cite{Zou2005}, the naive Elastic Net estimator is defined by setting the penalties
\[
p_a( \beta_j ) =  \lambda_{a,1}  | \beta_j | +  \dfrac{1}{2} \lambda_{a,2}  \beta_j^2 ; \quad p_b(\beta_j ) =  \lambda_{b,1} |\beta_j| +  \dfrac{1}{2} \lambda_{b,2} \beta_j^2,
\]
and we denote the resulting adjusted vectors and the adjusted ACE estimator as $\hat \bbeta^{(a)}_{\naiveEN}$, $\hat \bbeta^{(b)}_{\naiveEN}$, and $\hat{\tau}_{\naiveEN}$ respectively. The Elastic Net estimator is a rescaled version of the naive Elastic Net estimator which can have better prediction performance due to the paper \cite{Zou2005}. Specifically, let
\[
\hat \bbeta^{(a)}_{\EN} = \left( 1 + \lambda_{a,2} \right) \hat \bbeta^{(a)}_{\naiveEN}; \quad  \hat \bbeta^{(b)}_{\EN} = \left( 1 + \lambda_{b,2} \right) \hat \bbeta^{(b)}_{\naiveEN},
\]
the Elastic Net adjusted ACE estimator is defined as
\begin{equation}
\begin{split}
\label{def-ate-EN}
\hat{\tau}_{\EN} = &
\left[ \bar{a}_A - \left( \bar{\bx}_A - \bar{\bx} \right)^T\hat\bbeta^{(a)}_{\EN} \right]
- \left[ \bar{b}_B - \left( \bar{\bx}_B - \bar{\bx} \right)^T\hat\bbeta^{(b)}_{\EN} \right].  \nonumber
\end{split} \nonumber
\end{equation}

We first show the mean risk consistency of the Elastic Net adjusted estimators, that is, show that they satisfy \eqref{eqn:risk-consistency}. These results extend the $l_1$ consistence of the Elastic Net from sparse linear regression model to the Neyman-Rubin causal model. Given the regularization parameter $\lambda_{a,1}, \lambda_{b,1}$ and $\bbeta^{(a)}$ and $\bbeta^{(b)}$, the sparsity measures for treatment and control groups, $s^{(a)}_{\lambda_{a,1}}$ and $s^{(b)}_{\lambda_{b,1}}$ are defined as
\begin{equation}\label{def:s}
s^{(a)}_{\lambda_{a,1}} = \sum_{j=1}^p \min\left\{ \frac{ |\beta_j^{(a)}| }{\lambda_{a,1}}, 1 \right\} \ \text{and} \  s^{(b)}_{\lambda_{b,1}} = \sum_{j=1}^p \min\left\{ \frac{ |\beta_j^{(b)}| }{\lambda_{b,1}}, 1 \right\}.
\end{equation}
We note that $s^{(a)}_{\lambda_{a,1}}$ and $s^{(b)}_{\lambda_{b,1}}$ are allowed to grow as $n$ increases\footnote{In this definition of sparsity, we allow that there exit some small but nonzero components of $\bbeta^{(a)}$ and $\bbeta^{(b)}$ which decay at a fast enough rate.}. For notational simplicity, we do note index the parameters with $n$.

The following conditions will guarantee that the Elastic Net consistently estimates the adjustment vectors $\bbeta^{(a)}, \bbeta^{(b)}$ at a fast enough rate. These conditions are similar to but weaker than those that guarantee the $l_1$ consistence of the Lasso.

\begin{condition} \label{cond:scaling}
Let $s = \max \left\{ s^{(a)}_{\lambda_{a,1}}, s^{(b)}_{\lambda_{b,1}} \right\}$, suppose that
\begin{equation}
\label{cond:delta_n}
\delta_n = o\left(  \frac{ 1 }{ s \sqrt{\log p} }  \right),
\end{equation}
\begin{equation}
\label{cond:s-scaling}
(s \log p)/{\sqrt n} = o(1).
\end{equation}
\end{condition}

\begin{condition}\label{last-cond} Define the shrinked covariance matrices of the covariates as
\begin{equation}
\Sigma^{(z)}_{\EN} = (1+\lambda_{z,2})^{-1}\left( \Sigma + \lambda_{z,2} I\right), \quad z= a,b,
\end{equation}
where $I$ is a $p \times p$ identity matrix. Suppose there exist constants $C >0$ and $\xi >1$ not depending on $n$, such that
\begin{equation}\label{cond:cone-inv}
\|\bh_S \|_1 \leq Cs  \min\left\{ \| \Sigma^{(a)}_{\EN} \bh \|_\infty ,  \| \Sigma^{(b)}_{\EN} \bh \|_\infty\right\}, \ \forall \ \bh \in \mathcal{C},
\end{equation}
with $\mathcal{C}=\{\bh: \|\bh_{S^c}\|_1 \leq \xi \|\bh_{S}\|_1\}$, and
\begin{equation} \label{def:S}
S = \{ j: |\beta^{(a)}_j| > \lambda_{a,1} \ \textnormal{or} \  |\beta^{(b)}_j| > \lambda_{b,1} \}.
\end{equation}
\end{condition}


\begin{remark}
This assumption is the major difference between the Lasso \cite{bloniarz2015lasso} and the Elastic Net. Setting $\lambda_{a,2}= \lambda_{b,2} = 0$ leads it back to the cone invertibility factor condition in \cite{bloniarz2015lasso} which basically assumes that the smallest restricted eigenvalue of the Gram matrix $\Sigma = X^TX/n$ is bounded away from 0 at a certain rate. The matrices $\Sigma^{(a)}_{\EN}$ and $\Sigma^{(b)}_{\EN}$ shrink the  $\Sigma$ towards the identity matrix and when the restricted eigenvalue of the $\Sigma$ is close to 0, the restricted eigenvalues of the shrinked matrices can still be away from 0. Thus, this condition is weaker than the cone invertibility factor condition, making the Elastic Net adjusted ACE estimator performs better  than the Lasso adjusted ACE estimator.
\end{remark}

\begin{condition}\label{last-last-cond}
Let $\varsigma = \min\big\{1/70,(3p_A)^2/70,(3-3p_A)^2/70 \big\}$. For constants $0< \eta < \frac{\xi - 1}{\xi + 1}$ and $\frac{1}{\eta}<M<\infty$, and for $z=a,b$, assume the regularization parameters of the Elastic Net belong to the sets\footnote{We denote $p_a = p_A, p_b = p_B$ for notation simplicity.}

\small{
\begin{equation}\label{cond:lambda-a}
\lambda_{z,1} \in   \left( \frac{1}{\eta}, M \right] \times \left( \frac{2(1+\varsigma)L^{1/2}}{p_z} \sqrt{ \frac{2\log p}{n} } + \delta_n + \frac{ \lambda_{z,2} } { 1 + \lambda_{z,2} } ( L^{1/2} + 1 ) || \bbeta^{(z)}  ||_1 \right),
\end{equation}
\begin{equation}
\label{cond:lambda-a-2}
\max\{ \lambda_{a,2} || \bbeta^{(a)}  ||_1 ,   \lambda_{b,2} || \bbeta^{(b)}  ||_1  \} = O \left(  \sqrt{\frac{\log p}{n} } \right).
\end{equation}
}
\end{condition}

\begin{remark}
Under assumption \ref{last-last-cond}, $\lambda_{a,2}, \lambda_{b,2}$ are restricted to take relatively small values\footnote{We cannot choose a much larger value for them because in that case the $l_2$ penalty would become dominant, and no estimates will be set to zero.}. Together with assumption \ref{last-cond}, the theoretical advantages of the Elastic Net adjustment over the Lasso adjustment may not be very prominent. 
\end{remark}

\begin{remark}
Setting $\lambda_{a,2} = \lambda_{b,2} = 0$, assumptions \ref{cond:scaling} - \ref{last-last-cond} are the same as those proposed in paper \cite{bloniarz2015lasso} for deriving the asymptotic normality of the Lasso adjusted ACE estimator.
\end{remark}

\begin{thm}\label{mean risk consistent}
Under assumptions \ref{cond:stability} to \ref{cond:limit} and \ref{cond:scaling} - \ref{last-last-cond}, the Elastic Net adjusted vectors $\hat \bbeta^{(a)}_{\EN}$ and $\hat \bbeta^{(b)}_{\EN}$ satisfy
\begin{equation*}
\sqrt{n} \left( \bar{\bx}_{A} \right)^T \left( \hat \bbeta^{(a)}_{\EN}   - \bbeta^{(a)}   \right)  \stackrel{p}{\rightarrow} 0 , \quad  \sqrt{n} \left( \bar{\bx}_{B} \right)^T \left( \hat \bbeta^{(b)}_{\EN}   - \bbeta^{(b)}   \right)  \stackrel{p}{\rightarrow} 0.
\end{equation*}
Then, $\ACEhat_{\EN}$ is asymptotically unbiased and asymptotically normal, that is,
\begin{eqnarray} \label{ACE-conv}
\sqrt{n}  \left( \ACEhat_{\EN}  - \tau \right) \stackrel{d}{\rightarrow} \mathcal{N}\left( 0, \sigma^2_{\pen}  \right).
\end{eqnarray}
\end{thm}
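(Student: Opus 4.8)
The plan is to obtain the asymptotic normality as an immediate consequence of Theorem~\ref{thm:general}: once the two mean risk consistency limits in the displayed equation are verified, the conclusion $\sqrt{n}(\ACEhat_{\EN}-\tau)\stackrel{d}{\rightarrow}\mathcal{N}(0,\sigma^2_{\pen})$ follows by applying that theorem with $\hat\bbeta^{(z)}_{\pen}=\hat\bbeta^{(z)}_{\EN}$. Hence the entire effort reduces to showing $\sqrt{n}(\bar\bx_A)^T(\hat\bbeta^{(a)}_{\EN}-\bbeta^{(a)})\stackrel{p}{\rightarrow}0$ and its control-group analogue. For this I would use H\"older's inequality to split
\begin{equation*}
\left|\sqrt{n}\,(\bar\bx_A)^T(\hat\bbeta^{(a)}_{\EN}-\bbeta^{(a)})\right|\leq \sqrt{n}\,\|\bar\bx_A\|_\infty\,\|\hat\bbeta^{(a)}_{\EN}-\bbeta^{(a)}\|_1,
\end{equation*}
so that it suffices to bound the maximal deviation $\sqrt{n}\|\bar\bx_A\|_\infty$ and the $l_1$ estimation error separately.

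For the first factor, since the covariates are centered ($\bar\bx=\mathbf{0}$), $\bar\bx_A$ is the average of $\bx_i$ over a size-$n_A$ subset drawn without replacement from the population. I would apply a Bernstein-type concentration inequality for sampling without replacement coordinatewise, combined with a union bound over the $p$ coordinates and the fourth-moment bound \eqref{cond:xmoment} in Condition~\ref{cond:moment}, to conclude $\sqrt{n}\|\bar\bx_A\|_\infty=O_p(\sqrt{\log p})$; Condition~\ref{cond:stability} keeps $n/n_A$ bounded.

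The second factor, the $l_1$ consistency $\|\hat\bbeta^{(z)}_{\EN}-\bbeta^{(z)}\|_1=O_p(s\sqrt{\log p/n})$, is the technical core. I would first analyze the naive Elastic Net. Writing down its KKT conditions and comparing with the population projection coefficient $\bbeta^{(z)}$, I would define the good event on which the empirical gradient of the group loss at $\bbeta^{(z)}$, namely $\|n_z^{-1}\sum_{i}(\bx_i-\bar\bx_z)(\text{residual at }\bbeta^{(z)})\|_\infty$, is dominated by $\lambda_{z,1}$. The residual at the truth separates into a population piece bounded by $\delta_n$ through \eqref{def:delta} and a sampling-fluctuation piece of order $\sqrt{\log p/n}$, again controlled by a without-replacement maximal inequality; the lower bound on $\lambda_{z,1}$ in \eqref{cond:lambda-a} is calibrated exactly so that this event has probability tending to one. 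On the good event, the basic inequality forces the error $\bh=\hat\bbeta^{(z)}_{\naiveEN}-\bbeta^{(z)}$ into the cone $\mathcal{C}$, up to the small-coefficient slack permitted by the soft-threshold sparsity $s^{(z)}_{\lambda_{z,1}}$ and the set $S$ in \eqref{def:S}, after which the cone invertibility factor Condition~\ref{last-cond} applied to the shrinked matrix $\Sigma^{(z)}_{\EN}$ yields $\|\bh\|_1=O_p(s\lambda_{z,1})$. Finally I would transfer this bound from the naive to the rescaled estimator via $\hat\bbeta^{(z)}_{\EN}-\bbeta^{(z)}=(1+\lambda_{z,2})(\hat\bbeta^{(z)}_{\naiveEN}-\bbeta^{(z)})+\lambda_{z,2}\bbeta^{(z)}$, where the extra shrinkage bias $\lambda_{z,2}\bbeta^{(z)}$ is absorbed using $\lambda_{z,2}\|\bbeta^{(z)}\|_1=O(\sqrt{\log p/n})$ from \eqref{cond:lambda-a-2}.

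Combining the two factors gives $\sqrt{n}\|\bar\bx_A\|_\infty\|\hat\bbeta^{(a)}_{\EN}-\bbeta^{(a)}\|_1=O_p(\sqrt{\log p}\cdot s\sqrt{\log p/n})=O_p(s\log p/\sqrt{n})=o_p(1)$ by the scaling assumption~\eqref{cond:s-scaling}, with the $\delta_n$ and $\lambda_{z,2}$ contributions vanishing by \eqref{cond:delta_n} and \eqref{cond:lambda-a-2} respectively. The main obstacle I anticipate is the $l_1$-consistency step: the elastic net's $l_2$ penalty replaces the Gram matrix by $\Sigma+\lambda_{z,2}I$ and introduces the shrinkage bias, so the cone argument must be run against $\Sigma^{(z)}_{\EN}$ rather than $\Sigma$, and the interplay between the soft-threshold sparsity measure (which permits genuinely nonzero small coefficients) and the cone membership of $\bh$ requires careful bookkeeping. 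A secondary difficulty is that all concentration must be carried out for sampling without replacement rather than for i.i.d.\ data, so the standard Lasso exponential inequalities must be replaced by their finite-population analogues.
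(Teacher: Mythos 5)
Your proposal is correct and follows essentially the same route as the paper: reduction to mean risk consistency via Theorem~\ref{thm:general}, the H\"older split $\sqrt{n}\,\shortnorm{\bar\bx_A}_\infty\,\shortnorm{\hat\bbeta^{(a)}_{\EN}-\bbeta^{(a)}}_1$, finite-population concentration for the gradient and Gram-matrix terms, a good event calibrated by the lower bound on $\lambda_{z,1}$, and the cone argument run against the shrinked matrix $\Sigma^{(z)}_{\EN}$. The only cosmetic difference is that you bound the naive Elastic Net error first and transfer to the rescaled estimator via $\hat\bbeta^{(z)}_{\EN}-\bbeta^{(z)}=(1+\lambda_{z,2})(\hat\bbeta^{(z)}_{\naiveEN}-\bbeta^{(z)})+\lambda_{z,2}\bbeta^{(z)}$, whereas the paper folds the rescaling into the KKT system and analyzes $\hat\bbeta^{(z)}_{\EN}-\bbeta^{(z)}$ directly, absorbing the shrinkage bias $\lambda_{z,2}(I-\hat\Sigma^{(z)}_{\EN})\bbeta^{(z)}$ into the good event; both handle it with the same condition \eqref{cond:lambda-a-2}.
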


\begin{remark}
Bunea F. showed in his paper \cite{Bunea2008} that, in sparse linear regression model, the Elastic Net enjoys a smaller $l_1$ error bound under the same conditions as the Lasso, which is equivalent to show that the Elastic Net can obtain the same $l_1$ error bound under weaker conditions than the Lasso. This is what we have obtained in the above theorem. The main difference between our results and Bunea's lies on we operate within the Neyman-Rubin model with fixed potential outcomes for a finite population, where the treatment group is sampled without replacement.
\end{remark}

We next give a Neyman-type conservative estimate of the asymptotic variance based on the residual sum of squares. For $z=a,b$, let
\begin{equation}
\label{var_estim_a}
 \hat \sigma^2_{e^{(z)}} = \frac{1}{n_A - df^{(z)} } \sum_{i \in A} \left( z_i - \bar{z}_A - (\bx_i - \bar{\bx}_A )^T \hat \bbeta^{(z)}_{\EN} \right)^2,   \nonumber
\end{equation}
where $df^{(a)}$ and $df^{(b)}$ are degrees of freedom defined by
\[ df^{(z)} = \hat s^{(z)}+1 = || \hat \bbeta^{(z)}_\textnormal{EN} ||_0 +1,  \quad z = a,b. \]

Define the variance estimate as follows:
\begin{equation} \label{sigma-lasso}
\hat \sigma^2_\textnormal{EN} = \frac{n}{n_A}  \hat \sigma^2_{e^{(a)}} + \frac{n}{n_B} \hat \sigma^2_{e^{(b)}}.\nonumber
\end{equation}

\begin{thm}
\label{conservative_variance}
Under assumptions  \ref{cond:stability} to \ref{cond:limit} and \ref{cond:largest} - \ref{last-last-cond}, $\hat \sigma^2_\textnormal{EN}$ converges in probability to
\begin{equation*}
\frac{1}{p_A} \mathop {\lim}\limits_{n \rightarrow \infty } \sigma^2_{e^{(a)}} + \frac{1}{1-p_A} \mathop {\lim}\limits_{n \rightarrow \infty } \sigma^2_{e^{(b)}},
\end{equation*}
which is greater than or equal to the asymptotic variance of $\sqrt{n}( \ACEhat_{\EN}  - \tau )$. The difference is  $\mathop {\lim}\limits_{n \rightarrow \infty } \sigma^2_{e^{(a)}-e^{(b)}} $.
\end{thm}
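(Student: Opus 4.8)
The plan is to reduce the statement to the convergence of each group-wise residual variance estimate and then read off the limit and the conservativeness directly. Since $n/n_A \rightarrow 1/p_A$ and $n/n_B \rightarrow 1/(1-p_A)$ by Condition~\ref{cond:stability}, it suffices to prove that $\hat\sigma^2_{e^{(a)}} \stackrel{p}{\rightarrow} \lim_{n\rightarrow\infty}\sigma^2_{e^{(a)}}$ and $\hat\sigma^2_{e^{(b)}} \stackrel{p}{\rightarrow} \lim_{n\rightarrow\infty}\sigma^2_{e^{(b)}}$; adding the two scaled limits then produces the asserted target. The conservativeness is then immediate: by the formula for $\sigma^2_{\pen}$ in Theorem~\ref{thm:general}, the gap between the limit of $\hat\sigma^2_{\EN}$ and the true asymptotic variance of $\sqrt{n}(\ACEhat_{\EN}-\tau)$ equals $\lim_{n\rightarrow\infty}\sigma^2_{e^{(a)}-e^{(b)}}$, which is nonnegative as a limit of sample variances. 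I will carry out the argument for the treatment group $A$; the control group is symmetric.

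First I would substitute the potential-outcome decomposition~\eqref{decom-a}. Writing $\bh^{(a)} = \hat\bbeta^{(a)}_{\EN} - \bbeta^{(a)}$ and using $\bar a_A = \bar a + \bar\bx_A^T\bbeta^{(a)} + \bar e^{(a)}_A$, each residual collapses to
\[
a_i - \bar a_A - (\bx_i - \bar\bx_A)^T\hat\bbeta^{(a)}_{\EN} = (e^{(a)}_i - \bar e^{(a)}_A) - (\bx_i - \bar\bx_A)^T\bh^{(a)} .
\]
Squaring and summing over $i\in A$ splits $\sum_{i\in A}$ of the squared residuals into a main term $\sum_{i\in A}(e^{(a)}_i - \bar e^{(a)}_A)^2$, a cross term $-2\sum_{i\in A}(e^{(a)}_i - \bar e^{(a)}_A)(\bx_i-\bar\bx_A)^T\bh^{(a)}$, and a quadratic term $\sum_{i\in A}\big[(\bx_i-\bar\bx_A)^T\bh^{(a)}\big]^2$.

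Next I would control the three pieces after dividing by $n_A$. The main term $(n_A-1)^{-1}\sum_{i\in A}(e^{(a)}_i-\bar e^{(a)}_A)^2$ is the sample variance of $\{e^{(a)}_i\}_{i\in A}$ and converges in probability to the full-population limit $\lim_{n}\sigma^2_{e^{(a)}}$ by a finite-population law of large numbers for sampling without replacement, using the fourth-moment bound in Condition~\ref{cond:moment} and the convergence in Condition~\ref{cond:limit}, exactly as in the Lasso analysis of \cite{bloniarz2015lasso}. For the cross term I would bound its absolute value by $\|\mathbf{g}^{(a)}\|_\infty\,\norm{\bh^{(a)}}_1$, where $\mathbf{g}^{(a)}$ has $j$-th entry $n_A^{-1}\sum_{i\in A}(e^{(a)}_i-\bar e^{(a)}_A)(x_{ij}-\bar x_{A,j})$; a without-replacement maximal inequality gives $\|\mathbf{g}^{(a)}\|_\infty = O_p\big(\delta_n + \sqrt{\log p/n}\big)$, while the $l_1$ consistency $\norm{\bh^{(a)}}_1 = O_p(s\lambda_{a,1}) = o_p(1)$ established inside the proof of Theorem~\ref{mean risk consistent} makes the product $o_p(1)$ under Conditions~\ref{cond:scaling} and \ref{last-last-cond}. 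For the quadratic term, writing $\Sigma_A = n_A^{-1}\sum_{i\in A}(\bx_i-\bar\bx_A)(\bx_i-\bar\bx_A)^T$, I would use $(\bh^{(a)})^T\Sigma_A\bh^{(a)} \le \norm{\bh^{(a)}}_1^2 \,\max_{j,k}|(\Sigma_A)_{jk}|$ and bound the maximal entry by a constant with high probability via Cauchy--Schwarz, Condition~\ref{cond:moment}, Condition~\ref{cond:largest}, and concentration of the group Gram entries; with $\norm{\bh^{(a)}}_1 = o_p(1)$ this term is $o_p(1)$ as well. Finally, the degrees-of-freedom factor satisfies $(n_A-1)/(n_A-df^{(a)}) \stackrel{p}{\rightarrow} 1$ because $df^{(a)}/n_A = (\shortnorm{\hat\bbeta^{(a)}_{\EN}}_0+1)/n_A \stackrel{p}{\rightarrow} 0$ under the sparsity and scaling in Conditions~\ref{cond:scaling} and \ref{last-last-cond}, and this correction only enlarges the estimate, so it cannot destroy conservativeness. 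Combining the three limits yields $\hat\sigma^2_{e^{(a)}}\stackrel{p}{\rightarrow}\lim_{n}\sigma^2_{e^{(a)}}$.

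The main obstacle is the finite-population probabilistic control. Because $A$ is sampled without replacement, the group-level quantities $\mathbf{g}^{(a)}$, $\Sigma_A$, and the sample variance of $\{e^{(a)}_i\}_{i\in A}$ are sums of dependent terms, so i.i.d.\ concentration tools do not apply and one must instead invoke the without-replacement maximal and concentration inequalities used for the Lasso in \cite{bloniarz2015lasso}. The delicate point is matching the resulting rates against the effective sparsity: the cross and quadratic terms vanish precisely because the conditions force $\norm{\bh^{(a)}}_1 = O_p(s\lambda_{a,1}) = o_p(1)$ while simultaneously keeping $\|\mathbf{g}^{(a)}\|_\infty$ and $\max_{j,k}|(\Sigma_A)_{jk}|$ under control, so the whole argument hinges on the $l_1$ rate carried over from Theorem~\ref{mean risk consistent}. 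The remaining bookkeeping is parallel to the Ridge variance result in Theorem~\ref{thm:variance-ridge}.
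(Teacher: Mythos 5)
Your overall strategy is exactly the paper's: reduce to $\hat\sigma^2_{e^{(a)}}\stackrel{p}{\rightarrow}\lim_n\sigma^2_{e^{(a)}}$ (and symmetrically for $b$), substitute the decomposition \eqref{decom-a} to split the residual sum of squares into a main term, a cross term, and a quadratic term, kill the latter two using the $l_1$ rate $\norm{\bh^{(a)}}_1=o_p(1)$ from the proof of Theorem~\ref{mean risk consistent} together with the without-replacement concentration lemmas, and handle the main term with the finite-population law of large numbers under the fourth-moment condition. The only cosmetic difference is that the paper disposes of the cross term by Cauchy--Schwarz against the other two terms rather than by a H\"older bound with $\|\mathbf{g}^{(a)}\|_\infty$; both work.

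There is, however, one genuine gap: your treatment of the degrees-of-freedom factor. You assert that $df^{(a)}/n_A=(\shortnorm{\hat\bbeta^{(a)}_{\EN}}_0+1)/n_A\stackrel{p}{\rightarrow}0$ ``under the sparsity and scaling in Conditions~\ref{cond:scaling} and \ref{last-last-cond},'' but those conditions constrain the sparsity of the \emph{population} coefficient $\bbeta^{(a)}$, not the number of nonzeros of the \emph{estimator}. Unlike the Lasso, the Elastic Net is a strictly convex program and can in principle select far more than $n_A$ variables, so without a separate argument $n_A-df^{(a)}$ is not even guaranteed to stay positive. The paper closes this with Lemma~\ref{lem:num-of-selcted-varibles}, which shows $\hat s^{(a)}\leq Cs$ with probability tending to one via the KKT conditions: on the event $\mathcal{L}$ every selected coordinate $j$ satisfies $\Delta_j\geq(1-\eta)\lambda_{a,1}$ where $\Delta=|\hat\Sigma^{(a)}_{\EN}\bh|$, so $\Delta^T\Delta\geq(1-\eta)^2\lambda_{a,1}^2\hat s$, while the basic inequality and the largest-eigenvalue bound of Condition~\ref{cond:largest} give $\Delta^T\Delta\leq C's\lambda_{a,1}^2$. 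Note that Condition~\ref{cond:largest} is needed precisely here (not only for the quadratic term, where the entrywise bound from Condition~\ref{cond:moment} already suffices); this is why it appears in the hypothesis of the theorem. Your proof would be complete once you supply this KKT-based cardinality bound.
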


\begin{remark}
If we redefine $\hat \sigma^2_{e^{(a)}}$ and $\hat \sigma^2_{e^{(b)}}$ without adjusting the degrees of freedom:
\[ (\hat \sigma^*)^2_{e^{(z)}} = \frac{1}{n_A } \sum_{i \in A} \left( z_i - \bar{z}_A - (\bx_i - \bar{\bx}_A )^T \hat \bbeta^{(z)}_\textnormal{EN} \right)^2, \quad z = a,b.  \]
And define
\[(\hat \sigma^*)^2_\textnormal{EN} = \frac{n}{n_A}  (\hat \sigma^*)^2_{e^{(a)}} + \frac{n}{n_B} (\hat \sigma^*)^2_{e^{(b)}}.\]
We can obtain the same asymptotic result of the variance estimate $(\hat \sigma^*)^2_\textnormal{EN}$ as in Theorem~\ref{conservative_variance}. However, our simulation study shows that adjusting the degree of freedom can result in better confidence interval for the ACE.

\end{remark}

\subsection{Adaptive Lasso adjusted ACE estimator}
In the section, we illustrate how to apply Theorem~\ref{thm:general} to the Adaptive Lasso adjusted ACE estimator. We outline another set of assumptions that guarantee the mean risk consistency of the Adaptive Lasso estimator. 

By \cite{Zou2006}, the Adaptive Lasso adds different amount of penalties on different components of the coefficients, taking into account the relative importance of the covariates:
\[
p_a( \beta_j ) =  \lambda_{a,1} w_{a,j}  | \beta_j |  ; \quad p_b(\beta_j ) =  \lambda_{b,1} w_{b,j} |\beta_j| ,
\]
where $w_{a,j}, w_{b,j} \geq 0$ are some weights whose values could be taken as the inverse of the absolute values of the OLS estimator in low dimension or the Lasso estimator in high dimension or any other $\sqrt{n}$-consistent initial estimator. The main advantage of the Adaptive Lasso lies on it has variable selection consistency under much weaker conditions than the Lasso. The Adaptive Lasso adjusted vectors and adjusted ACE estimator are denoted as $\hat \bbeta^{(a)}_{\ada}$, $\hat \bbeta^{(b)}_{\ada}$, and $\hat{\tau}_{\ada}$ respectively. 
\begin{equation}
\begin{split}
\label{def-ate-ada}
\hat{\tau}_{\ada} = &
\left[ \bar{a}_A - \left( \bar{\bx}_A - \bar{\bx} \right)^T\hat\bbeta^{(a)}_{\ada} \right]
- \left[ \bar{b}_B - \left( \bar{\bx}_B - \bar{\bx} \right)^T\hat\bbeta^{(b)}_{\ada} \right].  \nonumber
\end{split} \nonumber
\end{equation}

Under similar assumptions of Theorem~\ref{mean risk consistent}, we can derive the mean risk consistency of the Adaptive Lasso estimator.
\begin{condition}\label{last-cond-ada} Let $z = a, b$, define the weighted covariance matrices of the covariates as
\begin{equation}
\Sigma^{(z)}_{\ada} = n^{-1}\sum_{i=1}^n ( \bv^{(z)} \bx_i )( \bv^{(z)} \bx_i  )^T , 
\end{equation}
where $\bv^{(z)}$ is a diagonal matrix with $(1/w_{z,1}, \cdots, 1/w_{z,p})$ on the diagonal. Suppose there exist constants $C >0$ and $\xi >1$ not depending on $n$, such that
\begin{equation}\label{cond:cone-inv-ada}
\|\bh_S \|_1 \leq Cs  \min\left\{ \| \Sigma^{(a)}_{\ada} \bh \|_\infty ,  \| \Sigma^{(b)}_{\ada} \bh \|_\infty\right\}, \ \forall \ \bh \in \mathcal{C},
\end{equation}
with $\mathcal{C}=\{\bh: \|\bh_{S^c}\|_1 \leq \xi \|\bh_{S}\|_1\}$.
\end{condition}


\begin{remark}
This assumption becomes the cone invertibility factor condition proposed in \cite{bloniarz2015lasso} when the weights are equal, $w_{a,1} = \cdots = w_{a,p} = 1 = w_{b,1} = \cdots = w_{b,p}$. These two conditions are very similar to each other, resulting similar theoretical properties of the Adaptive Lasso and the Lasso in terms of estimation and prediction accuracy. In practice, if one carefully choses the weights, the Adaptive Lasso may, but not always, have smaller prediction errors.
\end{remark}

\begin{thm}\label{thm:ada}
Suppose assumptions \ref{cond:stability}, \ref{cond:moment} for the rescaled covariates $\bw^{(z)} \bx_i$, \ref{cond:limit}, \ref{cond:scaling},  \ref{last-last-cond} with $\lambda_{a,2} = \lambda_{b,2} = 0$, and \ref{last-cond-ada} hold, then the Adaptive Lasso adjusted vectors $\hat \bbeta^{(a)}_{\ada}$ and $\hat \bbeta^{(b)}_{\ada}$ satisfy
\begin{equation*}
\sqrt{n} \left( \bar{\bx}_{A} \right)^T \left( \hat \bbeta^{(a)}_{\ada}   - \bbeta^{(a)}   \right)  \stackrel{p}{\rightarrow} 0 , \quad  \sqrt{n} \left( \bar{\bx}_{B} \right)^T \left( \hat \bbeta^{(b)}_{\ada}   - \bbeta^{(b)}   \right)  \stackrel{p}{\rightarrow} 0.
\end{equation*}
Therefore, $\ACEhat_{\ada}$ is asymptotically unbiased and asymptotically normal, that is,
\begin{eqnarray} \label{ACE-conv-ada}
\sqrt{n}  \left( \ACEhat_{\ada}  - \tau \right) \stackrel{d}{\rightarrow} \mathcal{N}\left( 0, \ATEsig^2 \right)
\end{eqnarray}
\end{thm}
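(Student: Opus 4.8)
The plan is to reduce the Adaptive Lasso to an ordinary Lasso by a diagonal change of variables, and then invoke the Lasso mean risk consistency already available as the $\lambda_{a,2}=\lambda_{b,2}=0$ case of Theorem~\ref{mean risk consistent}. For $z=a,b$, let $\bv^{(z)}=\mathrm{diag}(1/w_{z,1},\ldots,1/w_{z,p})$ and introduce rescaled covariates $\tilde\bx_i=\bv^{(z)}\bx_i$ together with the reparametrization $\tilde\bbeta=(\bv^{(z)})^{-1}\bbeta$. Since $\bx_i^T\bbeta=\tilde\bx_i^T\tilde\bbeta$ and $\lambda_{z,1}\sum_j w_{z,j}|\beta_j|=\lambda_{z,1}\shortnorm{\tilde\bbeta}_1$, the Adaptive Lasso objective in (\ref{def-pen-a})--(\ref{def-pen-b}) coincides term-by-term (including the centering, which maps to $\tilde\bx_i-\bar{\tilde\bx}_A$) with the Lasso objective in the rescaled coordinates; hence $\hat\bbeta^{(z)}_{\ada}=\bv^{(z)}\hat{\tilde\bbeta}^{(z)}$, where $\hat{\tilde\bbeta}^{(z)}$ is the Lasso fit on the rescaled design. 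The algebraic identity $\bx_i^T\bbeta^{(a)}=\tilde\bx_i^T\tilde\bbeta^{(a)}$ with $\tilde\bbeta^{(a)}=(\bv^{(a)})^{-1}\bbeta^{(a)}$ turns the decomposition (\ref{decom-a})--(\ref{decom-b}) into $a_i=\bar a+\tilde\bx_i^T\tilde\bbeta^{(a)}+e_i^{(a)}$ with an \emph{unchanged} error $e_i^{(a)}$ (and similarly for $b$), and by equivariance of the OLS projection under invertible diagonal rescaling, $\tilde\bbeta^{(a)}$ is exactly the projection target for the rescaled design. Consequently all quantities depending only on $e^{(z)}$ — in particular Condition~\ref{cond:limit} and the error moments in Condition~\ref{cond:moment} — are untouched by the reparametrization.

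First I would verify that the hypotheses of the rescaled Lasso result hold. Condition~\ref{cond:moment} for the rescaled covariates $\bw^{(z)}\bx_i$ is assumed directly; Condition~\ref{cond:scaling} bounds the transformed covariance $\delta_n$ and sparsity $s$ computed from $\tilde\bx_i$ and $\tilde\bbeta^{(z)}$; Condition~\ref{last-last-cond} with $\lambda_{a,2}=\lambda_{b,2}=0$ supplies precisely the Lasso tuning conditions of (\ref{cond:lambda-a})--(\ref{cond:lambda-a-2}); and Condition~\ref{last-cond-ada} is, by construction, the cone invertibility factor condition (\ref{last-cond} with $\lambda_{z,2}=0$) for the rescaled Gram matrix $\Sigma^{(z)}_{\ada}=n^{-1}\sum_{i=1}^n\tilde\bx_i\tilde\bx_i^T$. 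Hence Theorem~\ref{mean risk consistent} applies verbatim to the rescaled problem and delivers $\sqrt n(\bar{\tilde\bx}_A)^T(\hat{\tilde\bbeta}^{(a)}-\tilde\bbeta^{(a)})\stackrel{p}{\to}0$ and the analogue for $b$.

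I would then translate this back. Because $\bv^{(z)}$ is symmetric, $(\bar\bx_A)^T\bv^{(a)}=(\bv^{(a)}\bar\bx_A)^T=(\bar{\tilde\bx}_A)^T$, so
\[
(\bar\bx_A)^T(\hat\bbeta^{(a)}_{\ada}-\bbeta^{(a)})=(\bar{\tilde\bx}_A)^T(\hat{\tilde\bbeta}^{(a)}-\tilde\bbeta^{(a)}),
\]
which is the claimed mean risk consistency (\ref{eqn:risk-consistency}) for $\hat\bbeta^{(a)}_{\ada}$, and likewise for $\hat\bbeta^{(b)}_{\ada}$. The same identity gives $(\bar\bx_A)^T\hat\bbeta^{(a)}_{\ada}=(\bar{\tilde\bx}_A)^T\hat{\tilde\bbeta}^{(a)}$, so $\ACEhat_{\ada}$ is numerically equal to the rescaled-Lasso adjusted ACE estimator; asymptotic normality with variance $\ATEsig^2$ then follows directly from the conclusion of Theorem~\ref{mean risk consistent} applied to the rescaled problem (equivalently, from Theorem~\ref{thm:general} once mean risk consistency is in hand), its remaining hypotheses \ref{cond:stability} and \ref{cond:limit} and the error moments being unaffected by rescaling.

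The main obstacle I anticipate is the randomness of the weights $w_{z,j}$. When they are built from a data-dependent initial estimator (OLS or Lasso of the observed outcomes), the matrix $\bv^{(z)}$, the rescaled covariates $\tilde\bx_i$, and hence $\Sigma^{(z)}_{\ada}$ are themselves random and depend on the treatment assignment, so the fixed-design finite-population structure underlying Theorem~\ref{mean risk consistent} is not literally satisfied. The clean way around this is to either take the weights to be deterministic, or to condition on the initial estimator and require Conditions~\ref{cond:scaling}, \ref{last-last-cond}, and~\ref{last-cond-ada} to hold for the (random) rescaled design on an event of probability tending to one, after which the argument proceeds conditionally. The remaining work — checking that the transformed $\delta_n$, the transformed sparsity measure $s$, and the transformed tuning interval in (\ref{cond:lambda-a})--(\ref{cond:lambda-a-2}) indeed take the required form once expressed through the weights — is routine but demands careful bookkeeping.
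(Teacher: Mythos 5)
Your proposal is correct and follows essentially the same route as the paper: reduce the Adaptive Lasso to an ordinary Lasso on the rescaled design $\bv^{(z)}\bx_i$ via the identity $\bx_i^T\bbeta = (\bv^{(z)}\bx_i)^T(\bw^{(z)}\bbeta)$, invoke the Lasso $l_1$-consistency under the rescaled conditions, and translate back through $(\bar\bx_A)^T(\hat\bbeta_{\ada}-\bbeta) = (\bv^{(z)}\bar\bx_A)^T(\bw^{(z)}\hat\bbeta_{\ada}-\bw^{(z)}\bbeta)$. Your caveat about data-dependent weights is a fair point, but the paper's own proof treats the weights as deterministic in exactly the same way, so it does not distinguish your argument from theirs.
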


We can provide a similar Neyman-type conservative estimate of the asymptotic variance based on the residual sum of squares of the Adaptive Lasso estimator, $\hat \sigma^2_\textnormal{ada}$, just replacing the Elastic Net adjusted vectors by the Adaptive Lasso adjusted vectors.

\begin{thm}
\label{thm:variance-ada}
Under the assumptions of Theorem~\ref{thm:ada} and assumption \ref{cond:largest}, $\hat \sigma^2_\textnormal{ada}$ converges in probability to
\begin{equation*}
\frac{1}{p_A} \mathop {\lim}\limits_{n \rightarrow \infty } \sigma^2_{e^{(a)}} + \frac{1}{1-p_A} \mathop {\lim}\limits_{n \rightarrow \infty } \sigma^2_{e^{(b)}},
\end{equation*}
which is greater than or equal to the asymptotic variance of $\sqrt{n}( \ACEhat_{\ada}  - \tau )$.
\end{thm}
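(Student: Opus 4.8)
The plan is to reproduce the argument used for Theorem~\ref{conservative_variance}, replacing the Elastic Net adjusted vectors $\hat\bbeta^{(z)}_{\EN}$ by the Adaptive Lasso adjusted vectors $\hat\bbeta^{(z)}_{\ada}$ and invoking the mean risk consistency together with the prediction-error bounds established in the proof of Theorem~\ref{thm:ada}. Because Condition~\ref{cond:stability} gives $n/n_A \to 1/p_A$ and $n/n_B \to 1/(1-p_A)$, it suffices to show that each residual variance component converges to the corresponding population error variance, i.e.
\[
\hat\sigma^2_{e^{(a)}} \stackrel{p}{\rightarrow} \lim_{n\to\infty}\sigma^2_{e^{(a)}}, \qquad \hat\sigma^2_{e^{(b)}} \stackrel{p}{\rightarrow} \lim_{n\to\infty}\sigma^2_{e^{(b)}},
\]
after which the stated probability limit and the comparison with $\ATEsig^2$ follow at once.

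First I would use the decomposition~\eqref{decom-a}: for $i\in A$,
\[
a_i - \bar a_A - (\bx_i - \bar\bx_A)^T\hat\bbeta^{(a)}_{\ada} = (e^{(a)}_i - \bar e^{(a)}_A) + (\bx_i - \bar\bx_A)^T(\bbeta^{(a)} - \hat\bbeta^{(a)}_{\ada}).
\]
Squaring and averaging over $A$ splits the residual sum of squares into three pieces: a pure error term $n_A^{-1}\sum_{i\in A}(e^{(a)}_i-\bar e^{(a)}_A)^2$, a quadratic term in $\hat\bbeta^{(a)}_{\ada}-\bbeta^{(a)}$, and a cross term. For the pure error term I would apply the finite-population law of large numbers for sampling without replacement (the same device that underlies the variance of $\ACEhat_\unadj$): under the fourth-moment bound~\eqref{cond:errmoment} the sample second moment of the errors over $A$ converges in probability to its population counterpart $n^{-1}\sum_i (e^{(a)}_i)^2$, which has a finite limit by Condition~\ref{cond:limit}, while $\bar e^{(a)}_A \stackrel{p}{\rightarrow} 0$ since $\bar e^{(a)}=0$.

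The quadratic term equals $(\hat\bbeta^{(a)}_{\ada}-\bbeta^{(a)})^T\hat\Sigma_A(\hat\bbeta^{(a)}_{\ada}-\bbeta^{(a)})$, where $\hat\Sigma_A$ is the empirical Gram matrix over the treatment group; I would bound it by the prediction-error bound from the proof of Theorem~\ref{thm:ada} together with Condition~\ref{cond:largest}, which controls $\lambda_{\max}(\Sigma)$ and, after accounting for the negligible sampling fluctuation of $\hat\Sigma_A$ around $\Sigma$, forces this term to $0$ at the rate governed by the scaling~\eqref{cond:s-scaling}. The cross term is then dispatched by Cauchy--Schwarz against the other two pieces and also vanishes. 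The degrees-of-freedom correction is asymptotically immaterial, since $n_A/(n_A-df^{(a)}) = (1 - df^{(a)}/n_A)^{-1} \to 1$ once $df^{(a)}/n_A = (\hat s^{(a)}+1)/n_A \to 0$, which the estimated sparsity of the Adaptive Lasso satisfies under the scaling conditions; the same conclusion for the unadjusted normalization $1/n_A$ is what underlies the remark following Theorem~\ref{conservative_variance}. Assembling the pieces yields the limit $p_A^{-1}\lim\sigma^2_{e^{(a)}}+(1-p_A)^{-1}\lim\sigma^2_{e^{(b)}}$, and subtracting $\ATEsig^2$ from~\eqref{sig-def-theorem} shows the gap equals $\lim_{n\to\infty}\sigma^2_{e^{(a)}-e^{(b)}}\ge 0$, which gives the conservativeness.

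The main obstacle I anticipate is the quadratic term: unlike the error term, controlling $(\hat\bbeta^{(a)}_{\ada}-\bbeta^{(a)})^T\hat\Sigma_A(\hat\bbeta^{(a)}_{\ada}-\bbeta^{(a)})$ requires transferring the prediction-error and $\ell_1$ bounds, which are stated relative to the weighted Gram matrix $\Sigma^{(a)}_{\ada}$ of Condition~\ref{last-cond-ada}, to the raw empirical Gram matrix over the random treatment group, and simultaneously verifying that the number of selected covariates $\hat s^{(a)}$ grows more slowly than $n_A$ so that the degrees-of-freedom adjustment is harmless. Showing that $\hat\Sigma_A$ is uniformly close to $\Sigma$ in the relevant operator sense, under sampling without replacement and only fourth-moment control, is the technically delicate step; the remainder follows the template of Theorem~\ref{conservative_variance} verbatim.
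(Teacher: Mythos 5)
Your proposal is correct and is precisely the paper's route: the paper's own proof of Theorem~\ref{thm:variance-ada} is a one-line deferral to the proof of Theorem~\ref{conservative_variance} with the Elastic Net vector replaced by the Adaptive Lasso vector, and your decomposition into pure-error, quadratic, and cross terms together with the degrees-of-freedom control reproduces that template. The one difficulty you flag --- transferring the $\ell_1$ bound from the weighted Gram matrix to $\hat\Sigma_A$ --- is dispatched by the same rescaling device used in the proof of Theorem~\ref{thm:ada}: write the quadratic term as $(\bw\bh)^T(\bv\hat\Sigma_A\bv)(\bw\bh)$ and bound it by $\shortnorm{\bw\bh}_1^2\,\shortnorm{\bv\hat\Sigma_A\bv}_\infty$ (an entrywise sup-norm controlled by the moment condition on the rescaled covariates, as in the paper's treatment of the Elastic Net case), rather than via $\lambda_{\max}$ as you suggest.
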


\section{Simulation}

In this section we conduct simulation studies to evaluate the finite sample performance of the unadjusted difference-in-means estimator and the penalized regression adjusted ACE estimators including the Lasso, Ridge, naive Elastic Net, Elastic Net and Adaptive Lasso. We use the R package ``elasticnet" to compute the Elastic Net solution path and use the package ``glmnet" to compute the Adaptive Lasso and the Ridge estimators. The tuning parameters are selected by 10-fold Cross Validation (CV).

Similar to the simulation setups in \cite{bloniarz2015lasso}, the potential outcomes $a_i$ and $b_i$ are generated from the following nonlinear model: for $i=1,...,n$,
\[ a_i = \sum_{j=1}^{s} x_{ij} \beta_{j}^{(a)} + \exp{ \left( \sum_{j=1}^s x_{ij} \beta_{j}^{(a)} \times 0.15 \right) } + \epsilon^{(a)}_i, \]
\[ b_i = \sum_{j=1}^{s} x_{ij} \beta_{j}^{(b)} + \exp{ \left( \sum_{j=1}^s x_{ij} \beta_{j}^{(b)} \times 0.15 \right) } + \epsilon^{(b)}_i, \]
where $\epsilon^{(a)}_i$ and $\epsilon^{(b)}_i$ are independent error terms generated from normal distribution $N(0,\sigma^2)$. We set $n=200$, $p=50$ and $500$. For $p=50$, we include the OLS adjusted estimator in the comparison. The covariate vectors $\bx_i, i=1,\cdots,n$ are generated independently from a multivariate normal distribution $\mathcal{N}(0,\Sigma)$. We consider four examples for generating $\bx$ and $\bbeta$.
\begin{enumerate}
\item We set $s = 10$, $\sigma = 3$, the correlation between $x_i$ and $x_j$ equals $0.85^{ | i - j |}$, $i, \ j = 1,\cdots,p$, and
\[ \beta_{j}^{(a)} = 0.5, \ j=1,2,\cdots,10; \quad    \beta_{j}^{(b)} = 0.25, \ j=1,2,\cdots,10. \]
\item The same as Example 1 except that the first 10 nonzero elements of $\beta_{j}^{(a)}$ and $\beta_{j}^{(b)}$ are generated form uniform distribution $U(0,1)$.
\item The same as Example 1 expect that the correlation between  $x_i$ and $x_j$ equals $0.75$ for all $i,\ j = 1,\cdots,p, \ i \neq j$.
\item This example is similar to example 4 in the Elastic Net paper \cite{Zou2005}, but we are interested in estimating the ACE instead of the prediction performance of the Elastic Net in linear regression model. In this example, we set $\sigma =2$, $s = 15$, and
\begin{eqnarray}
\beta_j^{(a)} = \left \{
\begin{array}{ll}
0.5 & j = 1,\cdots, 5, \\
0.75 & j = 6, \cdots, 10, \\
1 & j = 11, \cdots, 15,
\end{array}
\right. \quad \quad \beta_j^{(b)} = \beta_j^{(a)} - 0.25, \quad j = 1, \cdots, 15. \nonumber
\end{eqnarray}
The covariates are generated as follows:
\begin{eqnarray}
&& \bx_i = W_1 + \epsilon_i^{\bx}, \quad W_1 \sim N(0,1), \quad i = 1, \cdots, 5, \nonumber \\
&& \bx_i = W_2 + \epsilon_i^{\bx}, \quad W_2 \sim N(0,1), \quad i = 6, \cdots, 10, \nonumber \\
&& \bx_i = W_3 + \epsilon_i^{\bx}, \quad W_3 \sim N(0,1), \quad i = 11, \cdots, 15, \nonumber \\
&& \bx_i \sim N(0,1), \quad \textnormal{$\bx_i$ independent and identically distributed}, \ i = 16, \cdots, p, \nonumber
\end{eqnarray}
where $\epsilon_i^{\bx}$ are independent and identically distributed $N(0,0.01), i = 1,\cdots,15$. In this example, there are three equally important groups with five members in each group.
\end{enumerate}

In all the examples, the values of $\bx_i$, $\bbeta^{(a)}$, $\bbeta^{(b)}$, $ \epsilon^{(a)}_i$, $ \epsilon^{(b)}_i$, $a_i$ and $b_i$ are generated once and then kept fixed. After they are generated, a completely randomized experiment is simulated $1000$ times, assigning $n_A= 80, 100, 120$ subjects to treatment group $A$ and the remainder to control group $B$. There are $24$ different combinations in total. We will use the abbreviations ``EN" for Elastic Net, ``naiveEN" for naive Elastic Net, and ``Ada" for Adaptive Lasso in the following Tables and Figures. 

The results of different ACE estimators are shown in the boxplots \ref{fig:boxplot1} - \ref{fig:boxplot4} with their variances presented on top of each method. We can see that the OLS adjusted ACE estimator dramatically decreases the variance of the unadjusted ACE estimator when $p < n$ and the penalized regression adjusted estimators (Lasso, Ridge, Elastic Net and Adpative Lasso) further reduce the variance for both $p<n$ and $p>n$ settings. Compared with the Lasso adjustment, the Elastic Net adjusted ACE estimator is at least $5\%$ better in most cases\footnote{This is not always true and there do exist simulation setups where the Elastic Net adjusted ACE estimator is worse than the Lasso adjusted ACE estimator.}, but the improvement is not as significant as the Lasso adjusted ACE estimator over the unadjusted one. The Ridge adjusted ACE estimator performs slightly better than the Lasso when $p =50$, while for $p=500$, it performs much worse unless the covariates are highly correlated and the magnitude of each coefficient is almost the same (see Example 3). Moreover, there seems no improvement of the Adaptive Lasso adjustment over the Lasso adjustment.

\begin{figure*}
\centerline{\includegraphics[width=\textwidth]{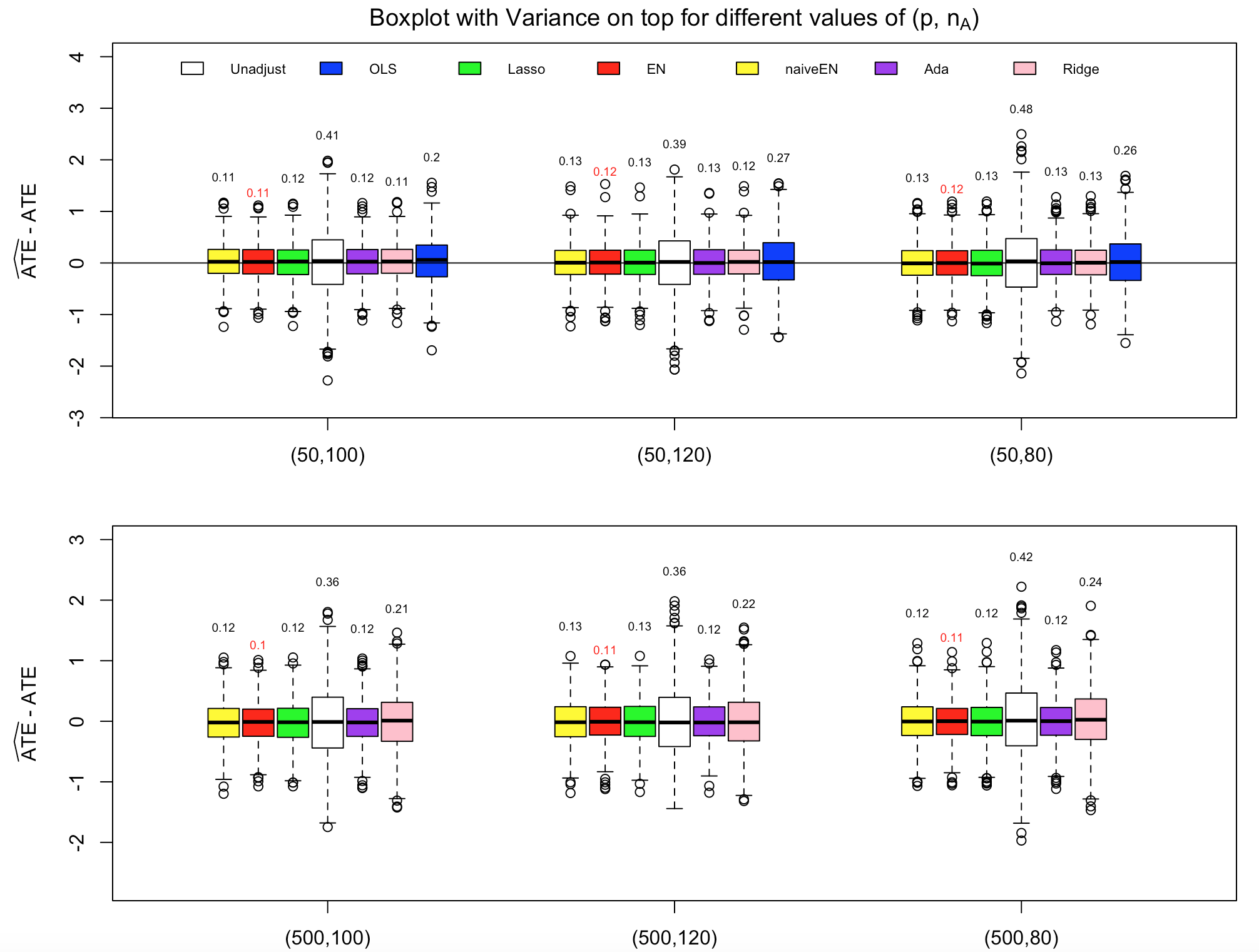}}
\vspace*{0.05in}
\caption{Boxplot of the unadjusted (white), OLS adjusted (blue; only computed when $p=50$), Lasso adjusted (green), Elastic Net adjusted (red), naive Elastic Net adjusted (yellow), Adaptive Lasso adjusted (purple), and Ridge adjusted (pink) ACE estimators with their variances presented on top of each box for Example 1.}\label{fig:boxplot1}
\end{figure*}

\begin{figure*}
\centerline{\includegraphics[width=\textwidth]{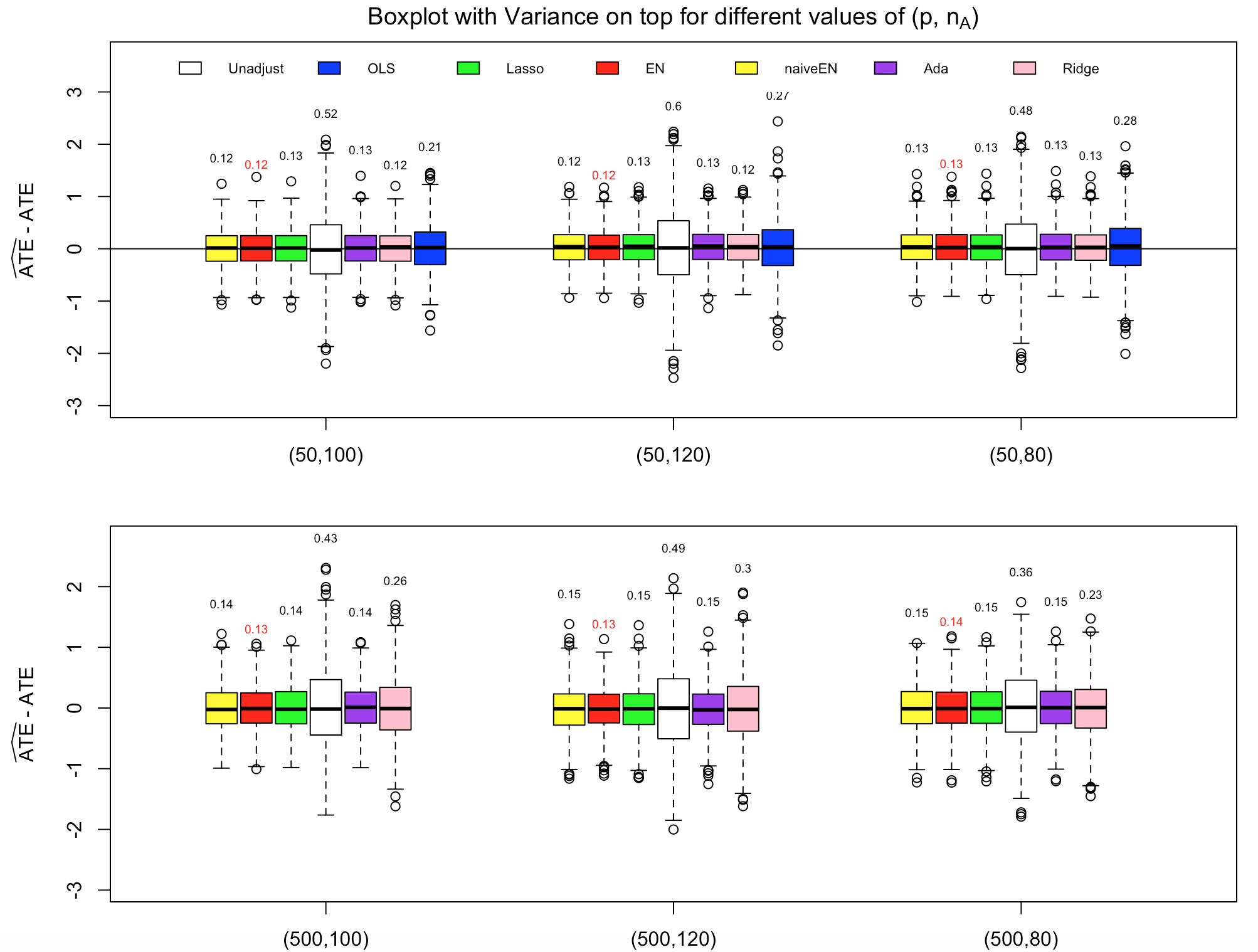}}
\vspace*{0.05in}
\caption{Boxplot of the unadjusted (white), OLS adjusted (blue; only computed when $p=50$), Lasso adjusted (green), Elastic Net adjusted (red), naive Elastic Net adjusted (yellow), Adaptive Lasso adjusted (purple), and Ridge adjusted (pink) ACE estimators with their variances presented on top of each box for Example 2.}\label{fig:boxplot2}
\end{figure*}

\begin{figure*}
\centerline{\includegraphics[width=\textwidth]{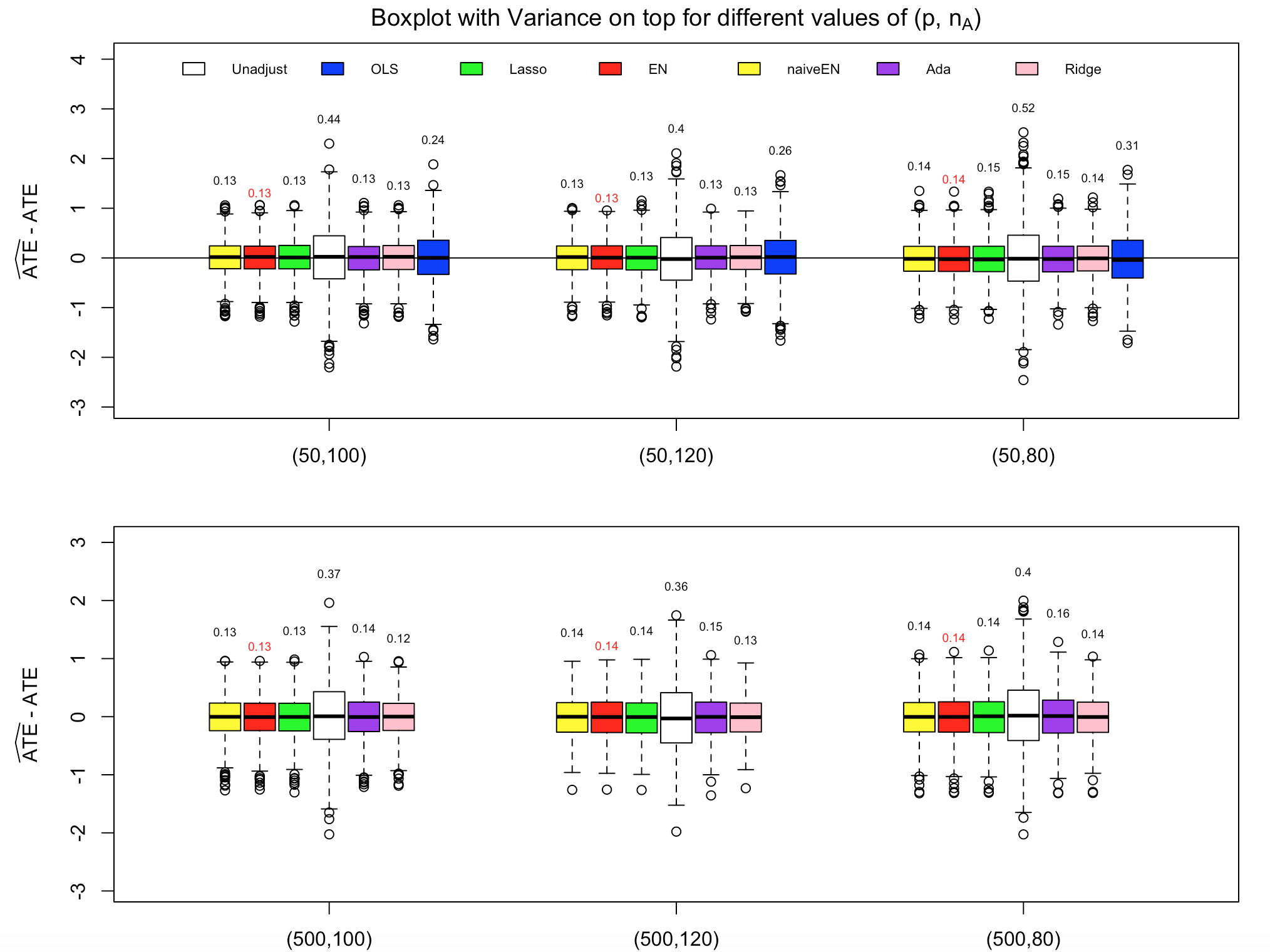}}
\vspace*{0.05in}
\caption{Boxplot of the unadjusted (white), OLS adjusted (blue; only computed when $p=50$), Lasso adjusted (green), Elastic Net adjusted (red), naive Elastic Net adjusted (yellow), Adaptive Lasso adjusted (purple), and Ridge adjusted (pink) ACE estimators with their variances presented on top of each box for Example 3.}\label{fig:boxplot3}
\end{figure*}

\begin{figure*}
\centerline{\includegraphics[width=\textwidth]{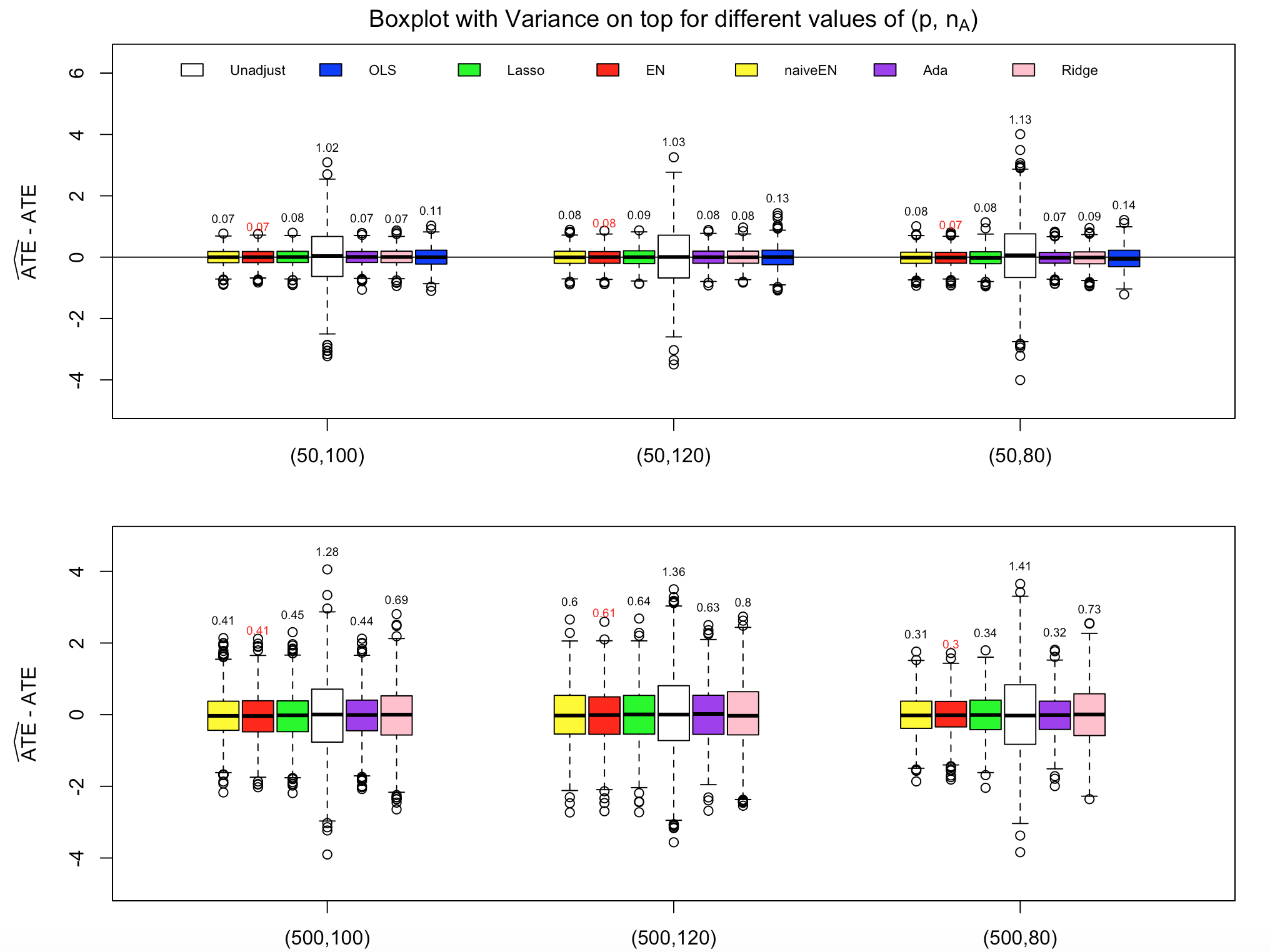}}
\vspace*{0.05in}
\caption{Boxplot of the unadjusted (white), OLS adjusted (blue; only computed when $p=50$), Lasso adjusted (green), Elastic Net adjusted (red), naive Elastic Net adjusted (yellow), Adaptive Lasso adjusted (purple), and Ridge adjusted (pink) ACE estimators with their variances presented on top of each box for Example 4.}\label{fig:boxplot4}
\end{figure*}

To see the comparison results in more details, we present the square of the bias (Bias$^2$), the variance (Var), the mean square error (MSE), the empirical coverage probability (Coverage) and the mean interval length (Length) for $95\%$ confidence interval of different estimators in Tables~\ref{tab:mse1} - \ref{tab:mse4} in the Appendix \ref{app:tables}. We find that the Bias$^2$ of each method is substantially smaller (more than 100 times) than the variance, implying that the finite sample bias is not a problem for regression and penalized regression adjusted ACE estimators. The OLS adjusted estimator reduces the MSE of the unadjusted estimator by $30\% - 80\%$ when $p=50$ and the Lasso adjusted estimator further reduces the MSE which are $60\% - 90\%$ (for $p=50$) and $60\% - 75\%$ (for $p=500$) smaller than that of the unadjusted estimator. The Elastic Net adjusted estimator performs at least as well as the Lasso adjusted estimator and it outperforms the latter for most cases, especially when there are group covariates and the correlations between the covariates within each group is high. On average, the efficiency gain is around $5\% - 14\%$ for $p = 50$ and $0.7\% - 13\%$ for $p = 500$. The naive Elastic Net adjusted estimator performs worse (less than $5\%$) than the Lasso adjusted estimator, similar to the findings in the original Elastic Net paper \cite{Zou2005}. Compared with the Lasso, the Ridge adjustment is better (less than $5\%$) for $p=50$ and is worse ($25\% - 110\%$) for $p=500$ except Example 3. In our simulation setups, the performance of the Adaptive Lasso adjustment is comparable to the Lasso adjustment and no one dominates the other.

In terms of the finite sample performance of the Neyman-type conservative variance estimates, we find that, in the first three examples, the coverage probabilities of all the methods except the OLS are much larger than the pre-assigned significance level ($95\%$), showing that these confidence intervals are very conservative. However, in Example 4 when there exist group variables and $p=500$, the variance estimators for the Lasso, Ridge and Elastic Net adjustments are not large enough to obtain the desired significance level, although their MSEs are still much better than the unadjusted estimator. In this case, the Adaptive Lasso works the best. We require to develop alternative variance estimation methods to improve the coverage probabilities in future work.

The Ridge adjusted estimator has the shortest confidence interval in all cases except $p=500$ in Example 4. But the comparison is not very fair to other methods because the Ridge does not adjust for the degrees of freedom while other methods do. Moreover, when $p=500$, its coverage probability is not satisfactory. 


We also note that the mean interval lengths of the Elastic Net adjusted estimator is slightly longer than that of the Lasso adjusted estimator. The reason is the Elastic Net seems to lose fitting accuracy a little bit in order to obtain better prediction, therefore, the residual sum of squares of the Elastic Net is a little bit larger than that of the Lasso, resulting in a little bit larger estimation of the variance. We may use the conservative variance estimate of the Lasso adjusted estimator to construct confidence interval for the Elastic Net adjusted estimator since the asymptotic variance of the Elastic Net adjusted estimator is no more than that of the Lasso adjusted estimator. Otherwise, we need to explore better statistical method for adjusting the degree of freedom of the Elastic Net estimator or use the bootstrap technique.

Overall, the Elastic Net adjusted ACE estimator performs the best in our simulations, followed by the Lasso, and then the Adaptive Lasso and finally the Ridge which is more suitable to the setting where the correlations between covariates are large and each coefficient has similar size.

\section{Conclusion}

Investigators often use regression adjustments to analyze randomized experiments in order to improve the estimation accuracy of various causal effect estimates. In this paper, we study the theoretical properties of penalized regression adjusted average causal effect (ACE) estimators when the number of baseline covariates in the experiments are large. Our analysis is presented within the Neyman-Rubin model with fixed potential outcomes for a finite population, where the treatment group is sampled without replacement.

We first derive a unified theorem on asymptotic normality for these estimators and outline conditions under which they are asymptotically more efficient than the simple difference-in-means estimator. 

We then apply the unified theorem to three penalized regression adjusted ACE estimators: the Ridge, Elastic Net and Adaptive Lasso. Specifically, we derive the asymptotic normality of the Ridge adjusted ACE estimator when $p \log p / \sqrt{n}  \rightarrow \infty$ and other appropriate assumptions hold. We show that the Elastic Net adjusted ACE estimator can be as efficient as the Lasso adjusted ACE estimator but under weaker conditions. We find that the theoretical advantage of the Adaptive Lasso adjusted ACE estimator over the Lasso adjusted ACE estimator is not significant. In theory, the Adaptive Lasso is often far better than the initial Lasso as regards variable selection, but its prediction and estimation performance are similar to the initial Lasso. Thus, the Adaptive Lasso is potentially useful for estimating heterogeneous causal effect such as the effect of treatment on subgroup of the units in the experiments, since it requires to correctly identify the set of relevant covariates\footnote{As pointed out by \cite{bloniarz2015lasso}, ``the covariates selected by the Lasso is unstable, and this may cause problems when interpreting them as evidence of heterogeneous treatment effect".} for this purpose. We leave it as future work to explore the properties of Adaptive Lasso adjusted heterogeneous causal effect. 

Lastly, we provide Neyman-type conservative estimates of the asymptotic variances of the penalized regression adjusted ACE estimators. Our variance estimators are based on the residual sum of squares with appropriately adjusted degrees of freedom, which result in asymptotically conservative confidence intervals for the ACE. They work well in most simulation cases, but there is still much space for improvement when the covariates are made up of several important groups and they are highly correlated within each group.

\section*{Acknowledgements}
Dr. Hanzhong Liu's research is supported by the National Natural Science Foundation of China (Grant No. 11701316). Dr. Yuehan Yang's research is partially supported by the National Natural Science Foundation of China (Grant No. 11671059).

\appendix

\section{Proof of the results}\label{app:proofs}

\subsection{Proof of Theorem~\ref{thm:general}}

\begin{proof}
Without loss of generality, assume that
\begin{eqnarray}\label{zero-means}
\bar{a}=0, \ \bar{b}=0.
\end{eqnarray}
Otherwise, we consider $a_i - \bar{a} $ and $b_i - \bar{b}$.
Recall the decompositions of the potential outcomes:
\begin{equation}\label{eqn:a}
a_i = \bar{a} + ( \bx_i  )^T \bbeta^{(a)} + \err^{(a)}_i = \bx_i^T \bbeta^{(a)} + \err^{(a)}_i,
\end{equation}
\begin{equation}
b_i = \bar{b} + ( \bx_i  )^T \bbeta^{(b)} + \err^{(b)}_i = \bx_i^T \bbeta^{(b)} + \err^{(b)}_i.
\end{equation}
If we define $\bh^{(a)}_{\pen} = \hat{\bbeta}^{(a)}_{\pen} - \bbeta^{(a)}$, $\bh^{(b)}_{\pen} = \hat{\bbeta}^{(b)}_{\pen} - \bbeta^{(b)}$, by substitution, we have
\begin{equation*}
\begin{split}
&\sqrt{n} ( \ACEhat_{\pen} - \tau )  =  \underbrace{ \sqrt{n} \left[ \bar{\err}_A^{(a)} - \bar{\err}_B^{(b)} \right] }_{\hypertarget{ATE-lhs}{}*} -  \underbrace{ \left[ \sqrt{n}  \left( \bar{\bx}_A \right)^T{\bh^{(a)}_{\pen}} -  \sqrt{n}  \left( \bar{\bx}_B \right)^T{\bh^{(b)}_{\pen}} \right ]}_{\hypertarget{ATE-rhs}{}**}.
\end{split}
\end{equation*}
Asymptotic normality of \hyperlink{ATE-lhs}{$(*)$} follows from Theorem 1 in \cite{freedman2008regression_a} with $a$ and $b$ replaced by $e^{(a)}$ and $e^{(b)}$ respectively. That is,
\begin{eqnarray} \label{ACE-conv}
\sqrt{n} \left[ \bar{\err}_A^{(a)} - \bar{\err}_B^{(b)} \right] \stackrel{d}{\rightarrow} \mathcal{N}\left( 0, \ATEsig^2 \right)
\end{eqnarray}
where
\begin{equation} \label{sig-def}
\ATEsig^2 = \lim_{n\rightarrow \infty}\left[\frac{1}{p_A} \sigma^2_{e^{(a)}} + \frac{1}{1-p_A}\sigma^2_{e^{(b)}} - \sigma^2_{e^{(a)}-e^{(b)}}\right].\end{equation}
Under the assumption \ref{eqn:risk-consistency}, \hyperlink{ATE-rhs}{$(**)$} converges to $0$ in probability. Thus, the asymptotic normality of Theorem \ref{thm:general} holds.

Next, we compare the asymptotic variance, $\ATEsig^2$, with that of the unadjusted difference-in-means estimator. Let $\bbeta^{(a)} = \bbeta^{(b)} = 0$, we can obtain the asymptotic normality of the difference-in-means estimator whose asymptotic variance is 
\begin{equation} \label{sig-def}
\ATEunadj^2 = \lim_{n\rightarrow \infty}\left[\frac{1}{p_A} \sigma^2_{a} + \frac{1}{1-p_A}\sigma^2_{b} - \sigma^2_{a-b}\right].\end{equation}
Since $\bbeta^{(a)}$ and $\bbeta^{(b)}$ are projection coefficients, we have the approximation errors $e^{(a)}$ and $e^{(b)}$ are orthogonal to $X \bbeta^{(a)}$ and $X \bbeta^{(b)}$. Therefore,
\[  \sigma_{a}^2  = \sigma^2_{X\bbeta^{(a)}} + \sigma^2_{e^{(a)}}, \quad  \sigma_{b}^2  = \sigma^2_{X\bbeta^{(b)}} + \sigma^2_{e^{(b)}}, \]
\[  \sigma_{a-b}^2  = \sigma^2_{X\bbeta^{(a)} - X\bbeta^{(b)}} + \sigma^2_{e^{(a)} - e^{(b)}} . \]
Hence, the difference of the asymptotic variances is the limit of 
\small{
\begin{eqnarray}
& &  \sigma^2_{X\bbeta^{(a)} - X\bbeta^{(b)}} - \sigma^2_{X\bbeta^{(a)}}/p_A - \sigma^2_{X\bbeta^{(b)}}/(1-p_A) \nonumber \\
& = &  \sum_{i=1}^{n}  \left[ \frac{ p_A ( 1 - p_A) \left( \bx_i^T \bbeta^{(a)} - \bx_i^T \bbeta^{(b)} \right)^2 - (1 - p_A) \left(  \bx_i^T \bbeta^{(a)} \right)^2 - p_A \left( \bx_i^T \bbeta^{(b)} \right)^2 }{(n -1 )p_A ( 1 - p_A )} \right] \nonumber \\
& = &  \sum_{i=1}^{n} \frac{ - \left( \bx_i^T \left( (1 - p_A ) \bbeta^{(a)} + p_A \bbeta^{(b)} \right) \right)^2 }{ (n -1 )p_A ( 1 - p_A ) } = \sum_{i=1}^{n} \frac{ - \left( \bx_i^T \bbeta_E \right)^2 }{ (n -1 )p_A ( 1 - p_A ) } = \frac{ - \sigma^2_{X\bbeta_E} }{p_A ( 1 - p_A )}. \nonumber
\end{eqnarray}
}

\end{proof}

\subsection{Some Useful Lemmas}

Before the proof of the main theorems, we first state several useful lemmas on upper bounds of $\left\Vert \bar{\bx}_A \right\Vert_\infty$, $\left\Vert (\bar{\bx}_A) (\bar{e}_A) \right\Vert_\infty$ and so on. The proofs of these lemmas could be found in \cite{bloniarz2015lasso}.

The first lemma is the Massart concentration inequality for sampling without replacement, which is the basis for various theoretical analysis of penalized estimators under finite population asymptotic framework. 

\begin{lem}[Bloniarz, Liu, Zhang, Sekhon and Yu (2016)]
\label{thm:concentration}
Let $\{z_i, i=1,...,n\}$
be a finite population of real numbers. Let $A \subset \{i,\ldots,n\}$ be a subset of deterministic size $|A|=n_A$ that is selected randomly without replacement. Define $p_A = n_A/n, \  \sigma^2 = \hbox{$n^{-1} \sum_{i=1}^{n}$} (z_i-\bar{z})^2$. Then, for any $t > 0$,
\begin{eqnarray}\label{bernstein}
P\left(\bar{z}_A   - \bar{z} \geq t \right) \leq \exp \left\{ - \frac{p_A n_A t^2}{(1+ \varsigma )^2 \sigma^2} \right\},
\end{eqnarray}
with $\varsigma = \min\big\{1/70,(3p_A)^2/70,(3-3p_A)^2/70 \big\}$.
\end{lem}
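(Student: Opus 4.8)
The plan is to use the exponential-moment (Chernoff) method. Fixing $\theta>0$ and applying Markov's inequality to $e^{\theta(\bar{z}_A-\bar{z})}$ gives
\[
P\left(\bar{z}_A-\bar{z}\geq t\right)\leq e^{-\theta t}\,\mathbb{E}\exp\{\theta(\bar{z}_A-\bar{z})\},
\]
so the entire problem reduces to controlling the moment generating function of the centered sample mean $\bar{z}_A-\bar{z}$ under sampling without replacement and then optimizing over $\theta$.

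The central difficulty is that the summands $\{z_i : i\in A\}$ are negatively dependent, so the MGF does not factorize. First I would invoke Hoeffding's comparison inequality: for any convex $f$, the expectation of $f\!\left(\sum_{i\in A}z_i\right)$ under sampling without replacement is dominated by its counterpart under sampling \emph{with} replacement of the same size $n_A$. Taking $f(\cdot)=\exp\{(\theta/n_A)\cdot\}$ reduces the MGF to that of the mean of $n_A$ i.i.d.\ draws $W_1,\dots,W_{n_A}$ from the empirical distribution of $\{z_1,\dots,z_n\}$, each with mean $\bar{z}$ and variance $\sigma^2$. A Bennett/Bernstein-type bound on $\mathbb{E}\exp\{(\theta/n_A)(W_j-\bar{z})\}$ — legitimate because the $z_i$, being finitely many reals, are bounded — yields $\mathbb{E}\exp\{\theta(\bar{z}_A-\bar{z})\}\leq\exp\{c\,\theta^2\sigma^2/n_A\}$, a sub-Gaussian MGF with proxy of order $\sigma^2/n_A$. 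Optimizing over $\theta$ then recovers the Gaussian-shaped exponent proportional to $n_A t^2/\sigma^2$.

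The last and hardest step is to sharpen the constant so that the exponent is exactly $p_A n_A t^2/((1+\varsigma)^2\sigma^2)$. The factor $p_A$ in the numerator is a finite-population correction that the crude reduction to with-replacement sampling does \emph{not} capture: for $p_A\leq 1/2$ the claimed exponent sits below the with-replacement rate, so the reduction above already implies the bound, but for $p_A>1/2$ the true tail is far thinner (the sampling fraction forces $\bar{z}_A\approx\bar{z}$), and reproducing the sharp $p_A$ factor there requires a genuinely without-replacement MGF computation in the spirit of Serfling's and Bardenet--Maillard's refinements, or a negative-association/self-bounding argument. The piecewise definition $\varsigma=\min\{1/70,(3p_A)^2/70,(3-3p_A)^2/70\}$ signals precisely such a case split — by whether $p_A$ is near $0$, near $1$, or intermediate — with the slack $(1+\varsigma)$ absorbing the discrepancy between the Bennett exponent and the clean Gaussian exponent in each regime. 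I expect the bookkeeping of these constants, namely verifying that the claimed exponent never exceeds the achievable sub-Gaussian rate across all three cases, to be the main obstacle, whereas the structural steps (Chernoff plus convex-order domination) are routine. Since the lemma is quoted verbatim from Bloniarz et al., one may alternatively simply cite their derivation of the sharp constant rather than reprove it.
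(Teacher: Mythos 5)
The first thing to say is that the paper contains no proof of this lemma: it is stated with the attribution ``[Bloniarz, Liu, Zhang, Sekhon and Yu (2016)]'' and the appendix explicitly defers to that reference (which in turn derives it from Massart's 1986 inequality for sampling without replacement). So your closing suggestion --- simply cite the source --- is exactly what the paper does, and there is no in-paper argument to compare your sketch against.

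Taken as a proof attempt, however, your middle step contains a genuine gap. The stated bound is \emph{range-free}: the exponent involves only the population variance $\sigma^2$ and has no dependence on $M=\max_i|z_i-\bar z|$. The route ``Hoeffding convex-order reduction to with-replacement sampling, then Bennett/Bernstein'' cannot deliver such a bound, because boundedness makes a variable sub-Gaussian only with variance proxy of order $M^2$, not of order its actual variance; what Bernstein actually gives for the i.i.d.\ surrogate mean is $\exp\{-n_A t^2/(2(\sigma^2+Mt/3))\}$, whose linear-in-$t$ correction is exactly what the lemma does not have. Indeed, the with-replacement analogue of a purely variance-based sub-Gaussian tail is \emph{false}: for the population with $z_1=1$ and $z_2=\cdots=z_n=0$ one has $\sigma^2\approx 1/n$, and $n_A$ i.i.d.\ draws satisfy $P(\bar W-\bar z\geq (1-p_A)/n_A)\approx p_A$, which is incompatible with any bound of the form $\exp\{-c\,n_A t^2/\sigma^2\}=\exp\{-c(1-p_A)^2/p_A\}$ with a fixed $c>0$ as $p_A\to0$. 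The stated inequality survives this example only because of the extra factor $p_A$ in the numerator, so your assertion that ``for $p_A\le 1/2$ the reduction above already implies the bound'' is unjustified: the finite-population structure is essential for all $p_A$, not only for $p_A>1/2$. Relatedly, $\varsigma$ is not the residue of a three-way case split over $p_A$ regimes; it is a single small correction (at most $1/70$, depending on $p_A$) to the Gaussian constant that comes out of Massart's argument. What survives of your outline is the Chernoff framing, the correct observation that the sharp $p_A$ factor demands a genuinely without-replacement computation, and the recourse to citation --- which is also the paper's choice.
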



The following lemmas are directly obtained from the Massart concentration inequality (Lemma~\ref{thm:concentration}), we state the results here without proof.
\begin{lem}[Bloniarz, Liu, Zhang, Sekhon and Yu (2016)]
\label{lem:subsamplemean}
Let $\{z_i, i=1,...,n\}$ be a finite population of real numbers. Let $A \subset \{i,\ldots,n\}$ be a subset of deterministic size $ |A| = n_A$ that is selected randomly without replacement. Suppose that the population mean of the $z_i$ has a finite limit and that there exist constants $\epsilon>0$ and $L<\infty$ such that
\begin{equation}
\label{con:eps}
\frac{1}{n} \sum_{i=1}^{n} |z_i|^{1+\epsilon} \leq L.
\end{equation}
If $\frac{n_A}{n} \rightarrow p_A \in (0,1)$, then
\begin{equation}
\bar{z}_A  \stackrel{p}{\rightarrow}  \mathop {\lim}\limits_{n \rightarrow \infty } \bar{z}.
\end{equation}
\end{lem}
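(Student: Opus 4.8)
The plan is to first reduce the claim to showing that $\bar{z}_A - \bar{z} \stackrel{p}{\rightarrow} 0$. Because the set $A$ is sampled without replacement, each index $i$ lies in $A$ with probability $n_A/n$, so $\mathbb{E}[\bar{z}_A] = \bar{z}$; combined with the assumption that $\bar{z}$ converges to a finite limit, it suffices to control the random fluctuation of $\bar{z}_A$ around its mean. The natural tool is the Massart-type concentration inequality of Lemma~\ref{thm:concentration}, applied in its two-sided form (the upper tail is stated, and the lower tail follows by replacing $z_i$ with $-z_i$). The obstacle is that Lemma~\ref{thm:concentration} involves the population variance $\sigma^2 = n^{-1}\sum_i (z_i - \bar{z})^2$, which need not stay bounded under the weak moment hypothesis $n^{-1}\sum_i |z_i|^{1+\epsilon} \leq L$ when $\epsilon < 1$.

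To get around this I would split each $z_i$ at a level $M_n \to \infty$ chosen to grow slowly with $n$, writing $z_i = z_i' + z_i''$ with $z_i' = z_i\, I(|z_i| \le M_n)$ and $z_i'' = z_i\, I(|z_i| > M_n)$, so that $\bar{z}_A - \bar{z} = (\bar{z}_A' - \bar{z}') + (\bar{z}_A'' - \bar{z}'')$. For the truncated part, the bound $(z_i')^2 \le M_n^{1-\epsilon}|z_i|^{1+\epsilon}$ gives a truncated population variance of order $M_n^{1-\epsilon} L$, and feeding this into Lemma~\ref{thm:concentration} yields, for each fixed $t > 0$, $P(|\bar{z}_A' - \bar{z}'| \ge t) \le 2\exp\{-c\, n_A t^2 / (M_n^{1-\epsilon} L)\}$ for a constant $c$ depending only on $p_A$ through $\varsigma$; with $n_A \asymp p_A n$ this exponent diverges provided $M_n^{1-\epsilon} = o(n)$. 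For the tail part, the inequality $|z_i|\, I(|z_i| > M_n) \le M_n^{-\epsilon}|z_i|^{1+\epsilon}$ shows both $|\bar{z}''| \le M_n^{-\epsilon} L \to 0$ deterministically and, via unbiasedness together with Markov's inequality applied to the nonnegative population $\{|z_i|\, I(|z_i| > M_n)\}$, that $\bar{z}_A'' \stackrel{p}{\rightarrow} 0$. Choosing for instance $M_n = \sqrt{n}$ makes $M_n^{1-\epsilon}/n = n^{-(1+\epsilon)/2} \to 0$ and $M_n^{-\epsilon} = n^{-\epsilon/2} \to 0$ simultaneously for every $\epsilon > 0$, so both pieces vanish and the reduction is complete.

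The main obstacle is exactly this variance-versus-moment mismatch: because only a $(1+\epsilon)$ moment is assumed, one cannot apply the concentration inequality to the raw population and must instead balance the truncation level so that the truncated variance factor $M_n^{1-\epsilon}$ stays $o(n)$ (keeping the concentration exponent large) while the discarded tail mass $M_n^{-\epsilon}$ still tends to $0$. A minor point to verify is that $\varsigma$ stays bounded away from the degenerate regime since $n_A/n \to p_A \in (0,1)$, and that Lemma~\ref{thm:concentration}, being a finite-population non-asymptotic bound, applies verbatim to the $n$-dependent truncated population $\{z_i'\}$ at each $n$. Assembling the two limits gives $\bar{z}_A - \bar{z} \stackrel{p}{\rightarrow} 0$, and adding the deterministic convergence $\bar{z} \to \lim_{n\rightarrow\infty} \bar{z}$ yields $\bar{z}_A \stackrel{p}{\rightarrow} \lim_{n\rightarrow\infty} \bar{z}$.
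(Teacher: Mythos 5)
Your argument is correct. The paper itself states this lemma without proof, deferring to Bloniarz et al.\ (2016), but it does indicate that the result is ``directly obtained from the Massart concentration inequality (Lemma~\ref{thm:concentration})'' --- which is exactly the tool you use, with the truncation at level $M_n$ supplying the variance control that the bare $(1+\epsilon)$-moment hypothesis does not. Both halves of your decomposition check out: the truncated variance bound $\sigma'^2 \leq M_n^{1-\epsilon}L$ feeds into Lemma~\ref{thm:concentration} (applied to $\pm z_i'$, with $(1+\varsigma)^2 \leq (71/70)^2$ uniformly), and the tail piece is handled by $|z_i|\,I(|z_i|>M_n) \leq M_n^{-\epsilon}|z_i|^{1+\epsilon}$ plus Markov. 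One tiny caveat: the inequality $(z_i')^2 \leq M_n^{1-\epsilon}|z_i|^{1+\epsilon}$ requires $\epsilon \leq 1$; for $\epsilon > 1$ you should first reduce to $\epsilon = 1$ via the power-mean inequality (in which case the second moment is bounded and no truncation is needed). It is also worth noting that an even more elementary route exists: under sampling without replacement $\mathrm{Var}(\bar{z}_A) \leq \sigma^2/n_A$, and since $\max_i |z_i| \leq (nL)^{1/(1+\epsilon)}$ one gets $\sigma^2 \leq L\,(nL)^{(1-\epsilon)/(1+\epsilon)} = o(n)$, so Chebyshev alone suffices; your concentration-plus-truncation argument proves the same thing with exponential rather than polynomial tail control on the truncated part.
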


Let 
\[ \hat \Sigma_A =  \frac{1}{n_A} \sum_{i\in A} \left( \bx_i  - \bar{\bx}_A  \right) \left( \bx_i  - \bar{\bx}_A  \right)^T, \quad  \Sigma = \frac{1}{n}\sum_{i=1}^{n} \bx_i \bx_i^T. \]

\begin{lem}[Bloniarz, Liu, Zhang, Sekhon and Yu (2016)]
\label{lem:concentrationofterms}
Under assumptions \ref{cond:stability}, \ref{cond:moment} and \ref{cond:scaling}, and as $n \rightarrow \infty$, we have
\small{
		\begin{equation}
		\label{lem:xterm-ada}
		P\left( \left\Vert \bar{\bx}_A  \right\Vert_\infty >  \frac{(1+\varsigma )L^{1/4}}{n_A} \sqrt{\frac{2\log p}{n}} \right) \rightarrow 0,
		\end{equation}
		
		\begin{equation}
		P\left( \left\Vert  \bar{e}_A^{(a)}  \right\Vert_\infty >  \frac{(1+\varsigma )L^{1/4}}{n_A} \sqrt{\frac{2\log p}{n}} \right) \rightarrow 0,
		\end{equation}
		
\begin{equation} \label{xeterm}
P\left( \shortnorm{ \frac{1}{n_A} \sum_{i \in A} \left( \bx_i - \bar{\bx}_A \right) \left(  e_i^{(a)}  - \bar{e}_A^{(a)} \right) }_\infty > \frac{2(1+\varsigma)L^{1/2}}{p_A} \sqrt{ \frac{2\log p}{n} } + \delta_n \right)  \rightarrow 0,
\end{equation}

\begin{eqnarray} \label{cov-mat-part1}
P\left(  \shortnorm{ \hat \Sigma_A   - \Sigma  }_\infty \geq  \frac{2(1+\varsigma)L^{1/2}}{p_A} \sqrt{ \frac{\log p}{n} } \right) \rightarrow 0.  \nonumber \\
\end{eqnarray}
}
\end{lem}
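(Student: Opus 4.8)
The plan is to derive all four bounds from the Massart concentration inequality for sampling without replacement (Lemma~\ref{thm:concentration}), applied coordinatewise and then combined with a union bound over the $p$ (or $p^2$) coordinates. Since Lemma~\ref{thm:concentration} is one-sided, for each scalar population $\{z_i\}$ I would apply it to both $z_i$ and $-z_i$ so as to control $|\bar{z}_A - \bar{z}|$, and throughout I would bound the relevant population variances by invoking the fourth-moment Condition~\ref{cond:moment} through the Cauchy--Schwarz inequality.

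For the first bound, fix a coordinate $j$ and take $z_i = x_{ij}$. Because the covariates are centered, $\bar{z} = 0$, and $\sigma^2 = n^{-1}\sum_{i=1}^n x_{ij}^2 \le (n^{-1}\sum_{i=1}^n x_{ij}^4)^{1/2} \le L^{1/2}$, so $\sigma \le L^{1/4}$. Inserting the stated threshold into \eqref{bernstein} makes the exponent exceed $\log p$, and a union bound over $j=1,\dots,p$, valid in the scaling regime of Condition~\ref{cond:scaling}, then forces the probability to vanish as $p\to\infty$. The second bound is identical with $z_i = e_i^{(a)}$, using $\bar{e}^{(a)}=0$ and the fourth-moment control \eqref{cond:errmoment}; here $\bar{e}_A^{(a)}$ is a scalar, so no union bound is even required.

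For the third bound I would write the $j$-th coordinate of the cross term as $n_A^{-1}\sum_{i \in A} x_{ij} e_i^{(a)} - \bar{x}_{A,j}\,\bar{e}_A^{(a)}$ and treat the two pieces separately. Applying Lemma~\ref{thm:concentration} to $z_i = x_{ij} e_i^{(a)}$, whose variance is at most $(n^{-1}\sum_{i=1}^n x_{ij}^4)^{1/2}(n^{-1}\sum_{i=1}^n (e_i^{(a)})^4)^{1/2} \le L$ by Cauchy--Schwarz, gives concentration of $n_A^{-1}\sum_{i\in A} x_{ij}e_i^{(a)}$ around its population mean $n^{-1}\sum_{i=1}^n x_{ij}e_i^{(a)}$; the latter is at most $\delta_n$ in absolute value by the definition \eqref{def:delta}, which is exactly the additive $\delta_n$ appearing in \eqref{xeterm}. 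The remainder $\bar{x}_{A,j}\bar{e}_A^{(a)}$ is of strictly smaller order, being a product of two quantities already shown to be $O_p(\sqrt{\log p/n})$, and is absorbed. The fourth bound is handled in the same way with $z_i = x_{ij}x_{ik}$, whose variance is again at most $L$, now taking a union bound over all $p^2$ pairs $(j,k)$; since $\log(p^2)=2\log p$, the threshold of order $\sqrt{\log p/n}$ in \eqref{cov-mat-part1} keeps the exponent above $2\log p$ and suffices.

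The routine part is the bookkeeping of constants; the only genuine points requiring care are (i) reducing the centered, self-normalized statistics ($\hat\Sigma_A$ and the cross term involve $\bx_i - \bar{\bx}_A$ and $e_i^{(a)} - \bar{e}_A^{(a)}$) to uncentered sample means plus negligible product-of-means remainders, and (ii) matching the union-bound cardinality ($p$ versus $p^2$) against the exponent in \eqref{bernstein} so that the chosen thresholds close the argument. I expect the cross-term bound \eqref{xeterm} to be the most delicate, since it is the one place where the covariance budget $\delta_n$ enters additively and must be separated cleanly from the stochastic fluctuation.
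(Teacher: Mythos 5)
Your proposal is correct and follows essentially the route the paper intends: the paper states this lemma without proof, noting it is ``directly obtained from the Massart concentration inequality'' of Lemma~\ref{thm:concentration} and deferring details to the cited reference, and your coordinatewise application of that inequality (to $\pm z_i$) with Cauchy--Schwarz fourth-moment variance bounds, union bounds over the $p$ or $p^2$ coordinates, the $\delta_n$ budget for the population covariance, and absorption of the product-of-means remainders is exactly that argument. The one caveat is that the threshold displayed in \eqref{lem:xterm-ada} has $n_A$ in the denominator where the Massart exponent calculation (and the original reference) requires the proportion $p_A$; your claim that ``inserting the stated threshold makes the exponent exceed $\log p$'' is true only for that corrected constant, so you are implicitly proving the intended statement rather than the literal one.
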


\subsection{Proof of Theorems}

It is enough to show the mean risk consistency of the Ridge, Elastic Net and Adaptive Lasso adjusted vectors. In this part we will focus on $\hat \bbeta^{(a)}$ for treatment group $A$, as the same analysis can be applied to $\hat \bbeta^{(b)}$ for control group $B$. The proof follows similar arguments of the Lemma 3 in \cite{bloniarz2015lasso} with necessary modifications dealing with different penalties.  For convenience, we will drop the superscript ``$(a)$" on quantities such as $\hat \bbeta^{(a)}$, $\bbeta^{(a)}$ and $\err^{(a)}$.

\subsubsection{Proof of Theorem~\ref{thm:ridge}}

\begin{proof}
By the definition of Ridge estimator,
\begin{eqnarray}
\hat \bbeta_{\Ridge} & =  & \argmin_{\bbeta} \frac{1}{2n_A} \sum_{i \in A} \left( a_i - \bar{a}_A - \left( \bx_i - \bar{\bx}_A \right)^T \bbeta \right)^2 + \frac{1}{2} \lambda_{a,2} \sum_{j=1}^{p} \beta_j^2   \nonumber \\
& = & \left( \hat \Sigma_A + \lambda_{a,2} I \right)^{-1}  \left[  \frac{1}{n_A} \sum_{i \in A} \left( \bx_i - \bar{\bx}_A \right) \left( a_i - \bar{a}_A \right) \right] \nonumber \\
& = & \left( \hat \Sigma_A + \lambda_{a,2} I \right)^{-1}  \left[  \hat \Sigma_A \bbeta + \frac{1}{n_A} \sum_{i \in A} \left( \bx_i - \bar{\bx}_A \right) \left( \err_i - \bar{\err}_A  \right) \right] \nonumber \\
& = & \bbeta +  \left( \hat \Sigma_A + \lambda_{a,2} I \right)^{-1} \left[  \frac{1}{n_A} \sum_{i \in A} \left( \bx_i - \bar{\bx}_A \right) \left( \err_i - \bar{\err}_A  \right) \right] - \lambda_{a,2} \bbeta, \nonumber
\end{eqnarray}
where $I$ is a $p \times p$ identity matrix and the last but second equality is due to the decomposition of the potential outcomes
\[a_i = \bar{a} + ( \bx_i  )^T \bbeta + \err_i,  \quad  \bar{a}_A  = \bar{a} +  ( \bar{\bx}_A  )^T \bbeta + \bar{\err}_A. \]
Therefore,
\begin{eqnarray}
&& \shortnorm{\hat \bbeta_{\Ridge} - \bbeta}_1 \nonumber \\
&\leq& \shortnorm{\lambda_{a,2} \bbeta}_1 + \norm{ \left( \hat \Sigma_A + \lambda_{a,2} I \right)^{-1} \left[  \frac{1}{n_A} \sum_{i \in A} \left( \bx_i - \bar{\bx}_A \right) \left( \err_i - \bar{\err}_A  \right) \right] }_1 \nonumber \\
& \leq & \shortnorm{\lambda_{a,2} \bbeta}_1 + \sqrt{p}  \norm{ \left( \hat \Sigma_A + \lambda_{a,2} I \right)^{-1} \left[  \frac{1}{n_A} \sum_{i \in A} \left( \bx_i - \bar{\bx}_A \right) \left( \err_i - \bar{\err}_A  \right) \right] }_2 \nonumber \\
& \leq & \shortnorm{\lambda_{a,2} \bbeta}_1 + \sqrt{p} \lambda_{\max} \left( \left( \hat \Sigma_A + \lambda_{a,2} I \right)^{-1} \right) \norm{   \frac{1}{n_A} \sum_{i \in A} \left( \bx_i - \bar{\bx}_A \right) \left( \err_i - \bar{\err}_A  \right)  }_2 \nonumber \\
& \leq &  \shortnorm{\lambda_{a,2} \bbeta}_1 + \sqrt{p} O_p\left(2/\Lambda_{\min} \right) \sqrt{p}  \shortnorm{   \frac{1}{n_A} \sum_{i \in A} \left( \bx_i - \bar{\bx}_A \right) \left( \err_i - \bar{\err}_A  \right)  }_\infty, \nonumber \\
& =  & \shortnorm{\lambda_{a,2} \bbeta}_1 + O_p\left(p/\Lambda_{\min} \right) \shortnorm{   \frac{1}{n_A} \sum_{i \in A} \left( \bx_i - \bar{\bx}_A \right) \left( \err_i - \bar{\err}_A  \right)  }_\infty
\end{eqnarray}
where the last inequality is due to the following Lemma:
\begin{lem}
\label{lem:eigen-ridge}
Under the assumptions of Theorem~\ref{thm:ridge}, 
\[ P \left(  \lambda_{\max} \left( \left( \hat \Sigma_A + \lambda_{a,2} I \right)^{-1} \right) \leq 2/\Lambda_{\min} \right)  \rightarrow 1 .\]
\end{lem}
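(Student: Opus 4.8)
The plan is to reduce the statement to a lower bound on the smallest eigenvalue of $\hat\Sigma_A + \lambda_{a,2} I$. Since this matrix is symmetric and positive definite, its inverse has largest eigenvalue equal to the reciprocal of its smallest eigenvalue, so the event in the lemma is exactly
\[
\left\{ \lambda_{\max}\left( \left( \hat\Sigma_A + \lambda_{a,2} I \right)^{-1} \right) \leq 2/\Lambda_{\min} \right\} = \left\{ \lambda_{\min}\left( \hat\Sigma_A + \lambda_{a,2} I \right) \geq \Lambda_{\min}/2 \right\}.
\]
Thus it suffices to show that $\lambda_{\min}(\hat\Sigma_A + \lambda_{a,2} I) \geq \Lambda_{\min}/2$ with probability tending to one.

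First I would write $\hat\Sigma_A + \lambda_{a,2} I = \Sigma^{(a)}_{\Ridge} + (\hat\Sigma_A - \Sigma)$, where $\Sigma^{(a)}_{\Ridge} = \Sigma + \lambda_{a,2} I$ is the population Ridge Gram matrix of Condition~\ref{cond:smallest-eigen}. Applying Weyl's eigenvalue perturbation inequality gives
\[
\lambda_{\min}\left( \hat\Sigma_A + \lambda_{a,2} I \right) \geq \lambda_{\min}\left( \Sigma^{(a)}_{\Ridge} \right) - \shortnorm{ \hat\Sigma_A - \Sigma }_2 \geq \Lambda_{\min} - \shortnorm{ \hat\Sigma_A - \Sigma }_2,
\]
where $\shortnorm{\cdot}_2$ is the spectral norm and the last step uses Condition~\ref{cond:smallest-eigen}. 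Hence the whole argument comes down to showing that the spectral-norm perturbation $\shortnorm{\hat\Sigma_A - \Sigma}_2$ is $o_p(1)$, since then it eventually drops below $\Lambda_{\min}/2$ with probability tending to one.

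The entrywise control is already available: the matrix concentration bound~(\ref{cov-mat-part1}) of Lemma~\ref{lem:concentrationofterms}, which holds under the present moment and scaling assumptions, gives $\shortnorm{\hat\Sigma_A - \Sigma}_\infty = O_p(\sqrt{\log p/n})$ in the entrywise maximum-absolute-value norm. I would then convert this to a spectral-norm estimate using the elementary bound $\shortnorm{M}_2 \leq p\,\shortnorm{M}_\infty$ valid for a symmetric $p \times p$ matrix (e.g.\ via the Frobenius norm or the maximum absolute row sum), obtaining
\[
\shortnorm{ \hat\Sigma_A - \Sigma }_2 \leq p\,\shortnorm{ \hat\Sigma_A - \Sigma }_\infty = O_p\left( p \sqrt{\tfrac{\log p}{n}} \right).
\]
Finally I would invoke the scaling Condition~\ref{cond:scaling-ridge}, equation~(\ref{cond:s-scaling-ridge}): since $\sqrt{\log p} \leq \log p$ for $p \geq 3$, we have $p\sqrt{\log p/n} \leq (p\log p)/\sqrt{n} = o(1)$, so $\shortnorm{\hat\Sigma_A - \Sigma}_2 = o_p(1)$ and the claim follows.

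The main obstacle, and the only genuinely delicate step, is the passage from the entrywise concentration bound to the spectral norm. The crude factor of $p$ in $\shortnorm{M}_2 \leq p\,\shortnorm{M}_\infty$ is precisely what forces the stringent scaling $(p\log p)/\sqrt{n} \to 0$ assumed for the Ridge; sharpening the lemma to permit larger $p$ (as conjectured in the remark following Condition~\ref{cond:scaling-ridge}) would require a genuine operator-norm concentration inequality for sampling without replacement, rather than the entrywise Massart-type bound supplied by Lemma~\ref{lem:concentrationofterms}. Everything else is a direct application of Weyl's inequality together with Condition~\ref{cond:smallest-eigen}.
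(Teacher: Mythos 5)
Your proof is correct and follows essentially the same route as the paper: the paper also reduces the claim to $\lambda_{\min}(\hat\Sigma_A + \lambda_{a,2} I) \geq \Lambda_{\min}/2$ and bounds the perturbation via $u^T(\hat\Sigma_A - \Sigma)u \leq \shortnorm{\hat\Sigma_A - \Sigma}_\infty \shortnorm{u}_1^2 \leq p\,\shortnorm{\hat\Sigma_A - \Sigma}_\infty$, which is exactly your Weyl-plus-$\shortnorm{M}_2 \leq p\shortnorm{M}_\infty$ step in different packaging, before invoking the entrywise concentration bound and the scaling condition. Your closing observation that the factor of $p$ is what forces the stringent Ridge scaling is also consistent with the paper's remark.
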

By Lemma~\ref{lem:concentrationofterms},
\begin{equation*} 
P\left( \shortnorm{ \frac{1}{n_A} \sum_{i \in A} \left( \bx_i - \bar{\bx}_A \right) \left(  e_i  - \bar{e}_A \right) }_\infty > \frac{2(1+\varsigma)L^{1/2}}{p_A} \sqrt{ \frac{2\log p}{n} } + \delta_n \right)  \rightarrow 0,
\end{equation*}
together with the scaling assumption~\ref{cond:scaling-ridge}, we have
\[   \shortnorm{ \frac{1}{n_A} \sum_{i \in A} \left( \bx_i - \bar{\bx}_A \right) \left(  e_i^{(a)}  - \bar{e}_A^{(a)} \right) }_\infty  = O_p \left(   \sqrt{ \frac{\log p}{n} } + \delta_n  \right).  \]
By assumption~\ref{cond:tuning-ridge} and the scaling assumption~\ref{cond:scaling-ridge},  
\[  \shortnorm{\lambda_{a,2} \bbeta}_1  = O\left(  p \sqrt{ \frac{\log p}{n} } \right) =  o \left(  \frac{1}{\sqrt{\log p}} \right) , \]
thus,
\[   \shortnorm{\hat \bbeta_{\Ridge} - \bbeta}_1 =  O_p\left(  p \sqrt{ \frac{\log p}{n} }  + p \delta_n \right) = o_p \left(  \frac{1}{\sqrt{\log p}} \right). \]
Therefore, together with Lemma~\ref{lem:concentrationofterms} and the scaling assumption~\eqref{cond:s-scaling-ridge}, the Ridge adjusted vector $\hat \bbeta_{\Ridge}$ is mean risk consistent, that is,
\begin{eqnarray*}
\left | \sqrt{n} \left( \bar{\bx}_A \right)^T  \left(  \hat \bbeta_{\Ridge}  - \bbeta \right) \right| & \leq &   \sqrt{n}  \shortnorm{ \bar{\bx}_A }_\infty  \shortnorm{  \hat \bbeta_{\Ridge}  - \bbeta}_1   \nonumber \\
& = &   \sqrt{n}  O_p \left( \sqrt{ \frac{\log p}{n} }  \right)  o_p \left(  \frac{1}{\sqrt{\log p}} \right) = o_p(1). 
\end{eqnarray*}
\end{proof}

\subsubsection{Proof of Theorem~\ref{thm:variance-ridge}}

\begin{proof}
The proof is almost the same as proof of Theorem~\ref{conservative_variance} (replacing $s$ by $p$ and replacing the Elastic Net adjusted vector by Ridge adjusted vector), so we omit it here.
\end{proof}

\subsubsection{Proof of Theorem \ref{mean risk consistent}}

\begin{proof}
We start with the KKT condition, which characterizes the solution to the Elastic Net. Recall the definition of the naive Elastic Net estimator $\hat \bbeta_{\naiveEN}$:
\[ \hat \bbeta_{\naiveEN} =  \argmin_{\bbeta} \frac{1}{2n_A} \sum_{i \in A} \left( a_i - \bar{a}_A - (\bx_i -  \bar{\bx}_A )^T \bbeta \right)^2 + \lambda_{a,1} \shortnorm{\bbeta}_1+ \frac{1}{2}\lambda_{a,2} \shortnorm{\bbeta}_2^2 \]
The KKT condition for $\hat \bbeta_{\naiveEN} $ is
\begin{equation}
\label{KKT}
 \frac{1}{n_A} \sum_{i\in A} (\bx_i -  \bar{\bx}_A) \left( a_i -  \bar{a}_A  - (\bx_i -  \bar{\bx}_A)^T \hat \bbeta_{\naiveEN} \right) - \lambda_{a,2} \hat \bbeta_{\naiveEN} = \lambda_{a,1} \mathbf{\kappa},
\end{equation}
where $\kappa$ is the subgradient of $||\bbeta||_1$ taking value at $\bbeta = \hat \bbeta_{\naiveEN}$, i.e.,
\begin{equation}\label{eqn:KKT}
\begin{split}
\mathbf{\kappa} \in \partial || \bbeta ||_1 \left |_{ \bbeta = \hat \bbeta_{\naiveEN} } \right.  \quad \textnormal{with} \quad
\left\{
\begin{aligned}
\kappa_j &\in [-1,1] \textnormal{ for } j \st \hat\beta_{\naiveEN,j} = 0 \\
\kappa_j &= \textnormal{sign}\left(\hat\beta_{\naiveEN,j}\right) \textnormal{ otherwise.}
\end{aligned}
\right.
\end{split}
\end{equation}
Since
\[a_i = \bar{a} + ( \bx_i  )^T \bbeta + \err_i, \]
we have
\begin{eqnarray}
\label{KKT11}
&& \left( \hat \Sigma_A +  \lambda_{a,2}I\right) \left( \bbeta  - \hat \bbeta_{\naiveEN}  \right) +  \frac{1}{n_A} \sum_{i\in A} (\bx_i - \bar{\bx}_A)( e_i - \bar{e}_A )  - \lambda_{a,2}\bbeta   = \lambda_{a,1} \mathbf{\kappa}. \nonumber \\
\end{eqnarray}
By definition,
\[\hat \bbeta_{\EN} = (1+ \lambda_{a,2}) \hat \bbeta_{\naiveEN},\]
then
\begin{eqnarray}
\label{KKT1}
&& (1+\lambda_{a,2})^{-1}\left( \hat \Sigma_A + \lambda_{a,2}I\right) \left( ( 1+\lambda_{a,2} ) \bbeta  - \hat \bbeta_{\EN}  \right)  \nonumber \\
& & +  \frac{1}{n_A} \sum_{i\in A} (\bx_i - \bar{\bx}_A)( e_i - \bar{e}_A )  - \lambda_{a,2}\bbeta  = \lambda_{a,1} \mathbf{\kappa}.
\end{eqnarray}
Denoting
\[   
\hat \Sigma^{(a)}_{\EN} = (1+\lambda_{a,2})^{-1}\left( \hat \Sigma_A + \lambda_{a,2}I\right),
\]
we have
\begin{eqnarray}
\label{KKT1}
&& \hat \Sigma^{(a)}_{\EN}   \left( \bbeta - \hat \bbeta_{\EN}  \right) + \frac{1}{n_A} \sum_{i\in A} (\bx_i - \bar{\bx}_A)( e_i - \bar{e}_A )  - \lambda_{a,2} \left( I -  \hat \Sigma^{(a)}_{\EN}  \right) \bbeta  = \lambda_{a,1} \mathbf{\kappa}, \nonumber 
\end{eqnarray}
where $I$ is an identity matrix. Multiplying both sides of the above equation by $ - \bh^T = \left(  \bbeta  - \hat \bbeta_{\EN}  \right)^T$, we have
\begin{eqnarray}
\label{basic-inequality}
 & & \bh^T \hat \Sigma^{(a)}_{\EN}\bh
  -  \bh^T \left[ \frac{1}{n_A} \sum_{i\in A} (\bx_i - \bar{\bx}_A)( e_i - \bar{e}_A ) \right]  +  \bh^T \left[ \lambda_{a,2} \left( I -  \hat \Sigma^{(a)}_{\EN}  \right) \bbeta \right]
 \nonumber \\
 & = &  \lambda_{a,1} \left(\bbeta -  \hat \bbeta_{\EN}  \right)^T \mathbf{\kappa}     \leq  \lambda_{a,1}  \left( \shortnorm{\bbeta}_1 - \shortnorm{\hat\bbeta_{\EN}}_1\right),\nonumber
\end{eqnarray}
where the last inequality holds because
 \[ \left( \bbeta \right)^T \mathbf{\kappa} \leq || \bbeta ||_1 || \mathbf{\kappa} ||_\infty \leq || \bbeta ||_1 \ \  \textnormal{and} \ \  \hat \bbeta_{\EN}^T \mathbf{\kappa} = || \hat \bbeta_{\EN} ||_1. \]
Applying H\"older's inequality, we have
\begin{eqnarray*}
\bh^T \hat \Sigma^{(a)}_{\EN}\bh
&\leq  &   \lambda_{a,1}  \left( \shortnorm{\bbeta}_1 - \shortnorm{\hat\bbeta_{\EN}}_1\right)  \nonumber \\
& + &  \shortnorm{\bh}_1  \left( \shortnorm{ \frac{1}{n_A} \sum_{i\in A} (\bx_i - \bar{\bx}_A)( e_i - \bar{e}_A ) }_\infty + || \lambda_{a,2} \left( I -  \hat \Sigma^{(a)}_{\EN}  \right) \bbeta ||_\infty    \right).
\end{eqnarray*}
Define
\[ 
\mathcal{L} = \left\{ \shortnorm{\frac{1}{n_A} \sum_{i \in A} ( \bx_i - \bar{\bx}_A ) ( e_i - \bar{e}_A ) }_\infty  + || \lambda_{a,2} \left( I -  \hat \Sigma^{(a)}_{\EN}  \right) \bbeta ||_\infty \leq \eta \lambda_{a,1}  \right\}
\].
\begin{lem}
\label{lem:eventL}
Under the assumptions of Theorem~\ref{mean risk consistent}, $ P( \mathcal{L} ) \rightarrow 1$.
\end{lem}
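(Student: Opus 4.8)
The strategy is to show that each of the two terms defining $\mathcal{L}$ is, with probability tending to one, dominated by one of the three pieces making up the lower endpoint of the admissible range for $\lambda_{a,1}$ prescribed in Condition~\ref{last-last-cond}. Write $r_n = \frac{2(1+\varsigma)L^{1/2}}{p_A}\sqrt{2\log p/n} + \delta_n + \frac{\lambda_{a,2}}{1+\lambda_{a,2}}(L^{1/2}+1)\shortnorm{\bbeta}_1$ for that endpoint (the second factor in \eqref{cond:lambda-a}, with the superscript $(a)$ suppressed). Condition~\ref{last-last-cond} expresses $\lambda_{a,1}$ as $c\cdot r_n$ for a fixed constant $c\in(1/\eta,M]$, so that $\eta\lambda_{a,1}=\eta c\, r_n > r_n$ with a fixed multiplicative margin $\eta c - 1 > 0$. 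It therefore suffices to bound the sum of the two terms by $r_n(1+o_p(1))$ and absorb the $o_p(1)$ into this margin.

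The first term is handled directly by Lemma~\ref{lem:concentrationofterms}: applying \eqref{xeterm} with $\err = \err^{(a)}$ shows that $\shortnorm{\frac{1}{n_A}\sum_{i\in A}(\bx_i-\bar{\bx}_A)(\err_i-\bar{\err}_A)}_\infty \le \frac{2(1+\varsigma)L^{1/2}}{p_A}\sqrt{2\log p/n} + \delta_n$ on an event of probability tending to one, matching the first two pieces of $r_n$ exactly. For the second term I would first simplify the matrix factor: since $\hat\Sigma^{(a)}_{\EN} = (1+\lambda_{a,2})^{-1}(\hat\Sigma_A + \lambda_{a,2}I)$, a direct computation gives $\lambda_{a,2}(I - \hat\Sigma^{(a)}_{\EN}) = \frac{\lambda_{a,2}}{1+\lambda_{a,2}}(I - \hat\Sigma_A)$. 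Using the elementary bound $\shortnorm{M\bbeta}_\infty \le \shortnorm{M}_\infty\shortnorm{\bbeta}_1$ (with $\shortnorm{M}_\infty$ the entrywise maximum $\max_{j,k}|M_{jk}|$) together with $\shortnorm{\bbeta}_\infty \le \shortnorm{\bbeta}_1$, this yields $\shortnorm{\lambda_{a,2}(I-\hat\Sigma^{(a)}_{\EN})\bbeta}_\infty \le \frac{\lambda_{a,2}}{1+\lambda_{a,2}}\big(1 + \shortnorm{\hat\Sigma_A}_\infty\big)\shortnorm{\bbeta}_1$.

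It then remains to control $\shortnorm{\hat\Sigma_A}_\infty$. Because the covariates are centered at zero and $n^{-1}\sum_i x_{ij}^4 \le L$ by Condition~\ref{cond:moment}, Cauchy--Schwarz forces the diagonal entries $\Sigma_{jj} = n^{-1}\sum_i x_{ij}^2 \le L^{1/2}$ and the off-diagonals $|\Sigma_{jk}| \le (\Sigma_{jj}\Sigma_{kk})^{1/2}\le L^{1/2}$, so $\shortnorm{\Sigma}_\infty \le L^{1/2}$; combined with the concentration estimate \eqref{cov-mat-part1}, namely $\shortnorm{\hat\Sigma_A - \Sigma}_\infty = O_p(\sqrt{\log p/n}) = o_p(1)$, this gives $\shortnorm{\hat\Sigma_A}_\infty \le L^{1/2} + o_p(1)$ with probability tending to one. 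Hence $\shortnorm{\lambda_{a,2}(I-\hat\Sigma^{(a)}_{\EN})\bbeta}_\infty \le \frac{\lambda_{a,2}}{1+\lambda_{a,2}}(L^{1/2}+1)\shortnorm{\bbeta}_1(1+o_p(1))$, the third piece of $r_n$ up to a negligible factor.

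Combining these bounds on the intersection of the high-probability events of Lemma~\ref{lem:concentrationofterms}, the sum defining $\mathcal{L}$ is at most $r_n(1+o_p(1))$, which lies below $\eta\lambda_{a,1}=\eta c\, r_n$ with probability tending to one because $\eta c > 1$ is a fixed constant; hence $P(\mathcal{L})\to 1$. \emph{The delicate point} is the handling of $\shortnorm{\hat\Sigma_A}_\infty$: one must check that the $\bar{\bx}_A$-centering in $\hat\Sigma_A$ does not inflate the $L^{1/2}$ bound (here the finite-population concentration in \eqref{cov-mat-part1} already accounts for it) and that the fluctuation $\shortnorm{\hat\Sigma_A-\Sigma}_\infty$ is genuinely lower order, so the stray $o_p(1)$ is swallowed by the strict margin $\eta c - 1 > 0$ rather than demanding a sharper constant in front of $\shortnorm{\bbeta}_1$.
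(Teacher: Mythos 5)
Your proposal is correct and follows essentially the same route as the paper's proof: the first term of $\mathcal{L}$ is controlled by \eqref{xeterm} of Lemma~\ref{lem:concentrationofterms}, the second via the identity $\lambda_{a,2}(I-\hat\Sigma^{(a)}_{\EN})=\frac{\lambda_{a,2}}{1+\lambda_{a,2}}(I-\hat\Sigma_A)$ together with \eqref{cov-mat-part1} and the fourth-moment bound giving the $(L^{1/2}+1)\shortnorm{\bbeta}_1$ factor, and the conclusion follows from Condition~\ref{last-last-cond}. Your explicit remark that the $o_p(1)$ must be absorbed by the strict margin $\eta c-1>0$ is a point the paper glosses over, and your bookkeeping $1+\shortnorm{\hat\Sigma_A}_\infty$ versus the paper's $\shortnorm{I-\Sigma}_\infty+\shortnorm{\hat\Sigma_A-\Sigma}_\infty$ is an immaterial difference.
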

We can continue our proof conditional on the event $\mathcal{L}$ and it holds that
\begin{align} \label{on-event-L}
\bh^T \hat \Sigma_{a,\EN}\bh  &\leq
\lambda_{a,1} \left( \shortnorm{\bbeta}_1 - \shortnorm{\hat\bbeta_{\EN}}_1\right) + \eta \lambda_{a,1} \shortnorm{\bh}_1.
\end{align}
By substituting the definition of $\bh$, we have
\begin{align*}
\shortnorm{\bbeta}_1 - \shortnorm{\hat\bbeta_{\EN}}_1 \leq \shortnorm{\bh_S}_1 - \shortnorm{\bh_{S^c}}_1 + 2\shortnorm{\bbeta_{S^c}}_1.
\end{align*}
Therefore,
\begin{eqnarray*}
0 \leq \bh^T \hat \Sigma_{a,\EN}\bh & \leq & \lambda_{a,1} \left( \shortnorm{\bh_S}_1 - \shortnorm{\bh_{S^c}}_1 + 2\shortnorm{\bbeta_{S^c}}_1 + \eta \shortnorm{\bh}_1 \right) \\
&\leq & \lambda_{a,1} \left[ ( \eta - 1 ) \shortnorm{\bh_{S^c}}_1 + (1 + \eta)\shortnorm{\bh_S}_1 + 2 \shortnorm{\bbeta_{S^c}}_1 \right].
\end{eqnarray*}
We obtain
\begin{equation}\label{eqn:hscbound}
\begin{split}
 ( 1 - \eta ) \shortnorm{\bh_{S^c}}_1  \leq (1 + \eta)\shortnorm{\bh_S}_1 + 2 \shortnorm{\bbeta_{S^c}}_1 \leq (1 + \eta)\shortnorm{\bh_S}_1 + 2 s \lambda_{a,1}.
\end{split}
\end{equation}
where the last inequality holds because of the definition of $s$ in \eqref{def:s} and $S$ in \eqref{def:S}. The remaining proof is almost the same as in \cite{bloniarz2015lasso}. We will present it here for completeness. Consider the following two cases:

(I) If $(1+\eta)\shortnorm{\bh_S}_1 + 2s\lambda_{a,1} \geq (1-\eta)\xi\shortnorm{\bh_S}_1$ then by \eqref{eqn:hscbound},
\begin{equation*}
\begin{split}
\shortnorm{\bh}_1  & = \shortnorm{\bh_S}_1 + \shortnorm{\bh_{S^c}}_1  \leq \left( \frac{1+\eta}{1-\eta} + 1\right)\shortnorm{\bh_S}_1 + \frac{2s\lambda_{a,1}}{1-\eta} \\
& \leq \frac{2s\lambda_{a,1}}{1-\eta}\left( \frac{2}{(1-\eta)\xi - (1+\eta)} + 1 \right).
\end{split}
\end{equation*}
By the definition of $\lambda_{a,1}$ and the scaling assumptions \eqref{cond:delta_n}, \eqref{cond:s-scaling}, we have that $s \lambda_{a,1} = o \left(  \frac{1}{\sqrt{\log p}} \right) $. Thus, 
\[ \shortnorm{\bh}_1  = o_p \left(  \frac{1}{\sqrt{\log p}} \right).  \]

(II) If $(1+\eta)\shortnorm{\bh_S}_1 + 2s\lambda_{a,1} < (1-\eta)\xi\shortnorm{\bh_S}_1$ then by  \eqref{eqn:hscbound} we have
$
\shortnorm{\bh_{S^c}}_1 \leq \xi \shortnorm{\bh_S}_1
$.
Applying assumption~\ref{last-cond} on the design matrix \eqref{cond:cone-inv},
\begin{equation}\label{eqn:hbound}
\begin{split}
\shortnorm{\bh}_1 & = \shortnorm{\bh_S}_1 + \shortnorm{\bh_{S^c}}_1  \leq (1+\xi)\shortnorm{\bh_S}_1 \leq (1+\xi) C s \shortnorm{  \Sigma^{(a)}_{\EN} \bh}_\infty
\end{split}
\end{equation}

Recall that, by KKT condition and the definition of $ - \bh^T = \left(  \bbeta  - \hat \bbeta_{\EN}  \right)^T$, we have obtained 
\begin{eqnarray}
\label{KKTnew}
&& - \hat \Sigma^{(a)}_{\EN}   \bh + \frac{1}{n_A} \sum_{i\in A} (\bx_i - \bar{\bx}_A)( e_i - \bar{e}_A )  - \lambda_{a,2} \left( I -  \hat \Sigma_{\EN}  \right) \bbeta  = \lambda_{a,1} \mathbf{\kappa}, \nonumber 
\end{eqnarray}
This time we will take the $l_\infty$-norm, yielding
\begin{eqnarray}\label{eqn:KKT-infty}
\shortnorm{   \hat \Sigma^{(a)}_{\EN}   \bh }_\infty & \leq & \lambda_{a,1} + \shortnorm{ \frac{1}{n_A} \sum_{i\in A} (\bx_i - \bar{\bx}_A)( e_i - \bar{e}_A ) }_\infty + \shortnorm{  \lambda_{a,2} \left( I -  \hat \Sigma^{(a)}_{\EN}  \right) \bbeta }_\infty  \nonumber \\
& \leq & (1+\eta) \lambda_a, 
\end{eqnarray}
where the latter inequality holds on the set $\mathcal{L}$. The final step is to control the deviation of the subsampled covariance matrix from the population covariance matrix, so that we can apply \eqref{eqn:hbound}. We define another event with constant $C_1 = \frac{2(1+\varsigma)L^{1/2}}{p_A} $,
\begin{equation*}
\begin{split}
\mathcal{M} = & \left\{
\shortnorm{  \hat \Sigma_{\EN}^{(a)}  - \Sigma_{\EN}^{(a)} }_\infty   \leq C_1  \sqrt{ \frac{\log p}{n} }
\right\}
\end{split}
\end{equation*}

\begin{lem}\label{lem:eventM}
Under the assumptions of Theroem~\ref{mean risk consistent}, we have $P(\mathcal{M}) \rightarrow 1$.
\end{lem}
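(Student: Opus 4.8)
The plan is to reduce the claim directly to the Gram-matrix concentration bound \eqref{cov-mat-part1} already established in Lemma~\ref{lem:concentrationofterms}. The decisive observation is that the identity-matrix shrinkage terms in the definitions of $\hat \Sigma^{(a)}_{\EN}$ and $\Sigma^{(a)}_{\EN}$ cancel exactly in their difference. Recalling
\[
\hat \Sigma^{(a)}_{\EN} = (1+\lambda_{a,2})^{-1}\left( \hat \Sigma_A + \lambda_{a,2}I\right), \qquad \Sigma^{(a)}_{\EN} = (1+\lambda_{a,2})^{-1}\left( \Sigma + \lambda_{a,2}I\right),
\]
subtraction yields $\hat \Sigma^{(a)}_{\EN} - \Sigma^{(a)}_{\EN} = (1+\lambda_{a,2})^{-1}( \hat \Sigma_A - \Sigma )$, so that the shrinkage toward the identity leaves the deviation of the subsampled covariance from the population covariance unchanged apart from the scalar factor $(1+\lambda_{a,2})^{-1}$.

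First I would take the entrywise $l_\infty$-norm of this identity. Since the $l_2$-penalty parameter satisfies $\lambda_{a,2}\geq 0$, we have $(1+\lambda_{a,2})^{-1}\leq 1$ deterministically, whence
\[
\shortnorm{ \hat \Sigma^{(a)}_{\EN} - \Sigma^{(a)}_{\EN} }_\infty = (1+\lambda_{a,2})^{-1}\shortnorm{ \hat \Sigma_A - \Sigma }_\infty \leq \shortnorm{ \hat \Sigma_A - \Sigma }_\infty.
\]
Consequently, writing $C_1 = 2(1+\varsigma)L^{1/2}/p_A$, the event $\{ \shortnorm{ \hat \Sigma_A - \Sigma }_\infty \leq C_1 \sqrt{\log p/n} \}$ is contained in $\mathcal{M}$.

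Then I would invoke \eqref{cov-mat-part1} of Lemma~\ref{lem:concentrationofterms}, which asserts exactly that $P\left( \shortnorm{ \hat \Sigma_A - \Sigma }_\infty \geq C_1 \sqrt{\log p/n} \right) \rightarrow 0$; that lemma is available under assumptions \ref{cond:stability}, \ref{cond:moment} and \ref{cond:scaling}, all of which are in force for Theorem~\ref{mean risk consistent}. By monotonicity of probability under the inclusion just derived,
\[
P(\mathcal{M}) \geq P\left( \shortnorm{ \hat \Sigma_A - \Sigma }_\infty < C_1 \sqrt{\tfrac{\log p}{n}} \right) \rightarrow 1,
\]
which is the claim.

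Since the argument is a one-line algebraic cancellation followed by an already-proved concentration inequality, there is no genuine obstacle. The only point demanding care is the recognition that the $\lambda_{a,2}I$ terms drop out of the difference $\hat \Sigma^{(a)}_{\EN} - \Sigma^{(a)}_{\EN}$, so that Elastic-Net shrinkage does not inflate the $l_\infty$ deviation beyond the Lasso/OLS case, together with the trivial bound $(1+\lambda_{a,2})^{-1}\leq 1$, which can only shrink it further.
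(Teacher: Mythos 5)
Your argument is correct and is essentially identical to the paper's own proof: both exploit the exact cancellation of the $\lambda_{a,2}I$ terms to get $\shortnorm{ \hat \Sigma^{(a)}_{\EN} - \Sigma^{(a)}_{\EN} }_\infty = (1+\lambda_{a,2})^{-1}\shortnorm{ \hat \Sigma_A - \Sigma }_\infty \leq \shortnorm{ \hat \Sigma_A - \Sigma }_\infty$ and then invoke \eqref{cov-mat-part1} of Lemma~\ref{lem:concentrationofterms}. Nothing further is needed.
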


We will prove Lemma \ref{lem:eventM} later. Continuing our inequalities, on the event $\mathcal{L} \cap \mathcal{M}$,
\begin{equation*}
\begin{split}
 s \shortnorm{ \Sigma_{\EN}^{(a)} \bh}_\infty  & \leq  C_1  s \sqrt{ \frac{\log p}{n} } \shortnorm{\bh}_1 + s \shortnorm{  \hat \Sigma^{(a)}_{\EN}   \bh  }_\infty \leq  o(1) \shortnorm{\bh}_1 + s(1+\eta)\lambda_{a,1},
\end{split}
\end{equation*}
where we have applied the scaling assumption \eqref{cond:s-scaling} and \eqref{eqn:KKT-infty} in the second line. Hence, by \eqref{eqn:hbound},
\begin{equation*}
\shortnorm{\bh}_1 \leq (1+\xi)C\left[ o(1)\shortnorm{\bh}_1 + s(1+\eta)\lambda_{a,1} \right].
\end{equation*}
Again, applying the scaling assumptions \eqref{cond:delta_n} and \eqref{cond:s-scaling}, we get 
\[ \shortnorm{\bh}_1 = o_p \left(  \frac{1}{\sqrt{\log p}} \right).  \]

In either case of (I) or (II), we have shown that 
\[ \shortnorm{\bh}_1 = \shortnorm{  \hat \bbeta_{\EN}  - \bbeta }_1 =  o_p \left(  \frac{1}{\sqrt{\log p}} \right).  \]
Therefore, together with Lemma~\ref{lem:concentrationofterms}, the Elastic Net adjusted vector $\hat \bbeta_{\EN}$ is mean risk consistent, that is,
\begin{eqnarray*}
\left | \sqrt{n} \left( \bar{\bx}_A \right)^T  \left(  \hat \bbeta_{\EN}  - \bbeta \right) \right| & \leq &   \sqrt{n}  \shortnorm{ \bar{\bx}_A }_\infty  \shortnorm{  \hat \bbeta_{\EN}  - \bbeta }_1   \nonumber \\
& = &   \sqrt{n}  O_p \left( \sqrt{ \frac{\log p}{n} }  \right)  o_p \left(  \frac{1}{\sqrt{\log p}} \right) = o_p(1). 
\end{eqnarray*}

\end{proof}

\subsection{Proof of Theorem \ref{conservative_variance}}

The following Lemma is used to control the degrees of freedom of the Elastic Net estimators. We will prove it in the later section.
\begin{lem}
\label{lem:num-of-selcted-varibles}
Under assumptions of Theorem \ref{conservative_variance}, there exists a constant $C$, such that the following holds with probability going to 1:
\begin{equation}
\hat s^{(a)} \leq C s; \ \ \hat s^{(b)} \leq C s.
\end{equation}
\end{lem}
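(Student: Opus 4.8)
The plan is to control the cardinality of the active set $\hat S = \{ j : \hat\beta^{(a)}_{\EN,j} \neq 0 \}$, whose size is $\hat s^{(a)}$ and which coincides with the support of the naive Elastic Net (the two estimators differ only by the positive factor $1+\lambda_{a,2}$). The starting point is the KKT identity derived in the proof of Theorem~\ref{mean risk consistent}, which I write as
\[
\lambda_{a,1}\kappa = \hat\Sigma^{(a)}_{\EN}\,\bh + \mathbf{r}, \qquad \bh = \bbeta - \hat\bbeta_{\EN}, \quad \mathbf{r} = \frac{1}{n_A}\sum_{i\in A}(\bx_i - \bar\bx_A)(e_i - \bar e_A) - \lambda_{a,2}\left(I - \hat\Sigma^{(a)}_{\EN}\right)\bbeta,
\]
where $\|\mathbf{r}\|_\infty \le \eta\lambda_{a,1}$ with probability tending to one, since this is precisely the event $\mathcal{L}$ of Lemma~\ref{lem:eventL}. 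On the active set every subgradient entry satisfies $|\kappa_j| = 1$, and from the proof of Theorem~\ref{mean risk consistent} I will reuse the bounds $\shortnorm{\bh}_1 = O_p(s\lambda_{a,1})$ together with the prediction-error bound $\bh^T \hat\Sigma^{(a)}_{\EN}\bh = O_p(s\lambda_{a,1}^2)$, the latter following from the basic inequality \eqref{on-event-L} and $\shortnorm{\bbeta_{S^c}}_1 \le s\lambda_{a,1}$.

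First I would test the identity against the vector $\bu$ obtained by restricting $\kappa$ to $\hat S$ and zero-filling, so that $\shortnorm{\bu}_0 \le \hat s^{(a)}$, $\shortnorm{\bu}_1 = \shortnorm{\bu}_2^2 = \hat s^{(a)}$ and $\shortnorm{\bu}_\infty \le 1$. Taking the inner product gives $\bu^T \hat\Sigma^{(a)}_{\EN}\bh = \lambda_{a,1}\bu^T\kappa - \bu^T\mathbf{r} \ge \lambda_{a,1}\hat s^{(a)} - \shortnorm{\bu}_1\shortnorm{\mathbf{r}}_\infty \ge (1-\eta)\lambda_{a,1}\hat s^{(a)}$ on $\mathcal{L}$. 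Since $\hat\Sigma^{(a)}_{\EN}$ is positive definite, Cauchy--Schwarz in its induced inner product yields the central inequality
\[
(1-\eta)^2\lambda_{a,1}^2\,(\hat s^{(a)})^2 \;\le\; \left(\bu^T \hat\Sigma^{(a)}_{\EN}\bu\right)\left(\bh^T \hat\Sigma^{(a)}_{\EN}\bh\right).
\]

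The crux is the sparse quadratic form $\bu^T \hat\Sigma^{(a)}_{\EN}\bu$. Because $p$ may greatly exceed $n$, the operator norm of $\hat\Sigma_A$ is not controlled, so I would not bound this by a global maximal eigenvalue; instead I exploit that $\bu$ is supported on $\hat S$. Writing $\bu^T\hat\Sigma_A\bu = \bu^T\Sigma\bu + \bu^T(\hat\Sigma_A - \Sigma)\bu \le \Lambda_{\max}\shortnorm{\bu}_2^2 + \shortnorm{\bu}_1^2\shortnorm{\hat\Sigma_A - \Sigma}_\infty$, the first term uses Condition~\ref{cond:largest} and the second uses the entrywise concentration \eqref{cov-mat-part1} of Lemma~\ref{lem:concentrationofterms}, namely $\shortnorm{\hat\Sigma_A - \Sigma}_\infty = O_p(\sqrt{\log p/n})$. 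Recalling $\hat\Sigma^{(a)}_{\EN} = (1+\lambda_{a,2})^{-1}(\hat\Sigma_A + \lambda_{a,2}I)$ with $\lambda_{a,2}$ bounded by Condition~\ref{last-last-cond}, this delivers $\bu^T \hat\Sigma^{(a)}_{\EN}\bu \le \big(\Lambda_{\max} + \hat s^{(a)}O_p(\sqrt{\log p/n}) + \lambda_{a,2}\big)\hat s^{(a)}$. Substituting this and the prediction bound into the central inequality and dividing through by $\lambda_{a,1}^2\hat s^{(a)}$ leaves
\[
(1-\eta)^2\hat s^{(a)} \;\le\; \Lambda_{\max}\,O_p(s) + \hat s^{(a)}\, s\, O_p\!\left(\sqrt{\tfrac{\log p}{n}}\right) + \lambda_{a,2}\,O_p(s).
\]

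The main obstacle, as anticipated, is the self-referential appearance of $\hat s^{(a)}$ on the right-hand side; this is resolved by the scaling assumption \eqref{cond:s-scaling}, since $s\log p/\sqrt n = o(1)$ forces $s\sqrt{\log p/n} \to 0$, so that the offending cross term is $\hat s^{(a)}\cdot o_p(1)$. Absorbing $\lambda_{a,2}O_p(s)$ into $O_p(s)$ then gives $\big[(1-\eta)^2 - o_p(1)\big]\hat s^{(a)} \le O_p(s)$, and since $(1-\eta)^2$ is a fixed positive constant (because $\eta < (\xi-1)/(\xi+1) < 1$), I conclude $\hat s^{(a)} = O_p(s)$, i.e. $\hat s^{(a)} \le Cs$ with probability tending to one for a suitable constant $C$. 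The identical argument applied to the control group yields $\hat s^{(b)} \le Cs$, completing the proof.
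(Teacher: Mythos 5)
Your proof is correct, but it takes a different route from the paper's. The paper also starts from the KKT identity and the events $\mathcal{L}$ and $\{\shortnorm{\bh}_1 \le Cs(1+\eta)\lambda_{a,1}\}$, but instead of testing against a restricted subgradient vector it sets $\Delta = \bigl|\hat\Sigma^{(a)}_{\EN}\bh\bigr|$, notes that $\Delta_j \ge (1-\eta)\lambda_{a,1}$ on every active coordinate so that $\Delta^T\Delta \ge (1-\eta)^2\lambda_{a,1}^2\,\hat s^{(a)}$, and bounds $\Delta^T\Delta = \bh^T\hat\Sigma^{(a)}_{\EN}\hat\Sigma^{(a)}_{\EN}\bh \le \lambda_{\max}\bigl(\hat\Sigma^{(a)}_{\EN}\bigr)\,\bh^T\hat\Sigma^{(a)}_{\EN}\bh$ from above; the two sides then give $\hat s^{(a)} = O_p(s)$ directly, with no self-referential term to absorb. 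The premise motivating your detour is actually mistaken: the operator norm of $\hat\Sigma_A$ \emph{is} controlled even when $p \gg n$, because $\hat\Sigma_A \preceq (n/n_A)\Sigma$ deterministically (dropping the rank-one centering term and extending the sum from $A$ to the full population), so Condition~\ref{cond:largest} gives $\lambda_{\max}(\hat\Sigma_A) \le (n/n_A)\Lambda_{\max}$ and hence $\lambda_{\max}\bigl(\hat\Sigma^{(a)}_{\EN}\bigr) \le \max\{(n/n_A)\Lambda_{\max},1\}$. Your workaround --- bounding the sparse quadratic form $\bu^T\hat\Sigma_A\bu$ by $\Lambda_{\max}\shortnorm{\bu}_2^2 + \shortnorm{\bu}_1^2\shortnorm{\hat\Sigma_A-\Sigma}_\infty$ and then killing the resulting $\hat s^{(a)}\, s\, O_p(\sqrt{\log p/n})$ cross term with the scaling assumption \eqref{cond:s-scaling} --- is nonetheless sound, and it has the mild virtue of only needing entrywise control of $\hat\Sigma_A - \Sigma$ rather than a spectral comparison; but it pays for this with the extra self-absorption step and an additional invocation of the scaling condition that the paper's argument does not require.
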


\begin{proof}

To prove Theorem~\ref{conservative_variance}, it is enough to show that
\begin{equation}
\label{conver_vara}
\hat \sigma^2_{e^{(a)}} \stackrel{p}{\rightarrow}  \mathop {\lim}\limits_{n \rightarrow \infty }   \sigma^2_{e^{(a)}},
\end{equation}
\begin{equation}
\label{conver_varb}
\hat \sigma^2_{e^{(b)}} \stackrel{p}{\rightarrow}  \mathop {\lim}\limits_{n \rightarrow \infty } \sigma^2_{e^{(b)}}.
\end{equation}

Similar to \cite{bloniarz2015lasso}, we will only prove the statement \eqref{conver_vara} and omit the proof of the statement \eqref{conver_varb} since it is very similar.

Recall the decomposition of potential outcome \eqref{eqn:a}, we have
\[  a_i = \bx_i^T \bbeta^{(a)} + e^{(a)}_i;  \ \  \bar{a}_A = ( \bar{\bx}_A )^T \bbeta^{(a)} +  \bar{e}_A^{(a)}, \]
and hence
\begin{eqnarray}
& & \hat \sigma^2_{e^{(a)}} \nonumber \\
& = & \frac{1}{n_A - df^{(a)} } \sum_{i \in A} \left( a_i - \bar{a}_A - ( \bx_i - \bar{\bx}_A )^T \hat \bbeta^{(a)}_{\EN} \right)^2  \nonumber \\
& = & \frac{n_A}{n_A - df^{(a)} } \left\{  \frac{1}{n_A} \sum_{i \in A} \left( e^{(a)}_i  -  \bar{e}_A^{(a)} \right)^2 + \frac{1}{n_A} \sum_{i \in A} \left( ( \bx_i - \bar{\bx}_A )^T ( \bbeta^{(a)} - \hat \bbeta^{(a)}_{\EN}  ) \right)^2 \right\}\nonumber \\
&& + \frac{n_A}{n_A - df^{(a)} } \left\{ \frac{2}{n_A} \sum_{i \in A}  ( e^{(a)}_i  -  \bar{e}_A^{(a)} )( \bx_i - \bar{\bx}_A )^T ( \bbeta^{(a)} - \hat \bbeta^{(a)}_{\EN}  ) \right\}. \nonumber
\end{eqnarray}
By the fourth moment condition on the approximation error $e^{(a)}$ (see \eqref{cond:errmoment}), and applying Lemma \ref{lem:subsamplemean} we get
\[  \frac{1}{n_A} \sum_{i \in A} ( e^{(a)}_i)^2 \stackrel{p}{\rightarrow} \mathop {\lim}\limits_{n \rightarrow \infty } \sigma^2_{e^{(a)}}; \ \ \bar{e}_A^{(a)} \stackrel{p}{\rightarrow} \mathop {\lim}\limits_{n \rightarrow \infty }  \bar e^{(a)} =0. \]
Therefore,
\begin{equation}
\label{part1}
\frac{1}{n_A} \sum_{i \in A} \left( e^{(a)}_i   -  \bar{e}_A^{(a)} \right)^2 = \frac{1}{n_A} \sum_{i \in A} \left(( e^{(a)}_i)^2 - (\bar{e}_A^{(a)})^2 \right) \stackrel{p}{\rightarrow} \mathop {\lim}\limits_{n \rightarrow \infty } \sigma^2_{e^{(a)}}.
\end{equation}
It is easy to show that
\begin{eqnarray}
\label{part2}
& & \frac{1}{n_A} \sum_{i \in A} \left( (\bx_i - \bar{\bx}_A)^T ( \bbeta^{(a)} - \hat \bbeta^{(a)}_{\EN}  ) \right)^2  \nonumber \\
& = & \left( \bbeta^{(a)} - \hat \bbeta^{(a)}_{\EN}  \right)^T \left[ \frac{1}{n_A} \sum_{i \in A} (\bx_i - \bar{\bx}_A)(\bx_i - \bar{\bx}_A)^T \right]  \left( \bbeta^{(a)} - \hat \bbeta^{(a)}_{\EN}  \right) \nonumber \\
& \leq & || \bbeta^{(a)} - \hat \bbeta^{(a)}_{\EN} ||_1^2  \cdot || \frac{1}{n_A} \sum_{i \in A} (\bx_i - \bar{\bx}_A)(\bx_i - \bar{\bx}_A)^T  ||_\infty  \stackrel{p}{\rightarrow} 0.
\end{eqnarray}
By Cauchy-Schwarz inequality,
\begin{eqnarray}
\label{part3}
& & \left| \frac{1}{n_A} \sum_{i \in A}  ( e^{(a)}_i  -  \bar{e}_A^{(a)} )(\bx_i - \bar{\bx}_A)^T ( \bbeta^{(a)} - \hat \bbeta^{(a)}_{\EN}  )  \right| \nonumber \\
& \leq &  \left[ \frac{1}{n_A} \sum_{i \in A}  \left( e^{(a)}_i  -  \bar{e}_A^{(a)} \right)^2 \right]^{\frac{1}{2}} \cdot \left[   \frac{1}{n_A} \sum_{i \in A} \left( (\bx_i - \bar{\bx}_A)^T ( \bbeta^{(a)} - \hat \bbeta^{(a)}_{\EN}  ) \right)^2 \right]^{\frac{1}{2}}, \nonumber
\end{eqnarray}
which converges to $0$ in probability because of \eqref{part1} and \eqref{part2}.

By Lemma~\ref{lem:num-of-selcted-varibles}, we have
\begin{equation}
\label{part4}
\frac{n_A}{n_A - df^{(a)} } = \frac{n_A}{n_A - \hat s^{(a)} -1 } \stackrel{p}{\rightarrow} 1.
\end{equation}
Combining \eqref{part1}, \eqref{part2} and \eqref{part4}, we conclude that
\[ \hat \sigma^2_{e^{(a)}}  \stackrel{p}{\rightarrow} \mathop {\lim}\limits_{n \rightarrow \infty }   \sigma^2_{e^{(a)}}.\]

\end{proof}


\subsubsection{Proof of Theorem~\ref{thm:ada}}

\begin{proof}
We will make a connection between the Adaptive Lasso estimator and the Lasso estimator computed from the rescaled covariates $\bv \bx_i$, then the conclusion follows directly from the proof in paper \cite{bloniarz2015lasso}. By the definition of Adaptive Lasso estimator,
\begin{eqnarray}
\hat \bbeta_{\ada}  &=&   \argmin_{\bbeta} \frac{1}{2n_A} \sum_{i \in A} \left( a_i - \bar{a}_A - \left( \bx_i - \bar{\bx}_A \right)^T \bbeta \right)^2 +  \lambda_{a,1} \sum_{j=1}^{p}  w_{a,j} | \beta_j | \nonumber \\
& = &  \argmin_{\bbeta} \frac{1}{2n_A} \sum_{i \in A} \left( a_i - \bar{a}_A - \left( \bv \bx_i -  \bv \bar{ \bx}_A \right)^T \bw \bbeta \right)^2 +  \lambda_{a,1} \shortnorm{  \bw \bbeta }_1, \nonumber
\end{eqnarray}
where $\bv = \textnormal{diag}\{ 1/w_{a,1}, \cdots, 1/w_{a,p} \}$ and $\bw =  \textnormal{diag}\{ w_{a,1}, \cdots, w_{a,p} \} = \bv^{-1}$. Thus, $\bw  \hat \bbeta_{\ada} $ is the Lasso solution for the rescaled covariates $ \bv \bx_i $, that is,
\begin{equation}
\bw  \hat \bbeta_{\ada} = \argmin_{\bbeta} \frac{1}{2n_A} \sum_{i \in A} \left( a_i - \bar{a}_A - \left( \bv \bx_i -  \bv \bar{ \bx}_A \right)^T  \bbeta \right)^2 +  \lambda_{a,1} \shortnorm{   \bbeta }_1. \nonumber
\end{equation}
Then, under the assumptions of Theorem~\ref{thm:ada}, and by the proof in paper \cite{bloniarz2015lasso}, it holds that
\[  \shortnorm{ \bw  \hat \bbeta_{\ada}  - \bw \bbeta  }_1  = o_p \left(  \frac{1}{\sqrt{\log p}} \right).    \]
Therefore,
\begin{eqnarray*}
\left | \sqrt{n} \left( \bar{\bx}_A \right)^T  \left(  \hat \bbeta_{\ada}  - \bbeta \right) \right|  & = & \left | \sqrt{n} \left( \bv \bar{\bx}_A \right)^T  \left( \bw \hat \bbeta_{\ada}  - \bw \bbeta \right) \right|   \nonumber \\
& \leq &   \sqrt{n}  \shortnorm{ \bv \bar{\bx}_A }_\infty  \shortnorm{  \bw \hat \bbeta_{\ada}  - \bw \bbeta }_1   \nonumber \\
& = &   \sqrt{n}  O_p \left( \sqrt{ \frac{\log p}{n} }  \right)  o_p \left(  \frac{1}{\sqrt{\log p}} \right)= o_p(1), 
\end{eqnarray*}
where the last equality is due to \eqref{lem:xterm-ada} of Lemma~\ref{lem:concentrationofterms} for the rescaled covariates $\bv \bx_i$.

\end{proof}

\subsubsection{Proof of Theorem~\ref{thm:variance-ada}}
\begin{proof}
Again, the proof is almost the same as the proof of Theorem~\ref{conservative_variance} (replacing the Elastic Net adjusted vector by the Adaptive Lasso adjusted vector), so we omit it here.
\end{proof}

\subsection{Proof of Lemmas}

\subsubsection{Proof of Lemma~\ref{lem:eigen-ridge}}

\begin{proof}
It is enough to show that
\[ P \left(  \lambda_{\min} \left(  \hat \Sigma_A + \lambda_{a,2} I  \right) \geq \Lambda_{\min}/ 2\right)  \rightarrow 1 .\]
By the definition of the smallest eigenvalue, we have
\[  \lambda_{\min} \left(  \hat \Sigma_A + \lambda_{a,2} I  \right)  = \inf_{||u||_2 = 1} u^T   \left(  \hat \Sigma_A + \lambda_{a,2} I  \right) u .   \]
For any $u \in R^p , \ || u ||_2 = 1$,
\begin{eqnarray}
&& u^T   \left(  \hat \Sigma_A + \lambda_{a,2} I  \right) u  - u^T   \left( \Sigma + \lambda_{a,2} I  \right) u \nonumber \\
& = & u^T   \left(  \hat \Sigma_A - \Sigma  \right) u  \leq  \shortnorm{ \hat \Sigma_A - \Sigma }_\infty || u ||_1^2 \leq \shortnorm{ \hat \Sigma_A - \Sigma }_\infty p || u ||_2^2 \nonumber \\
& = & O_p     \left( p \sqrt{ \frac{\log p}{n} }  \right) = o_p(1),
\end{eqnarray}
where the last but second equality is due to \eqref{cov-mat-part1} of Lemma~\ref{lem:concentrationofterms}. The conclusion follows directly from  assumption~\ref{cond:smallest-eigen}, 
\begin{equation*}
\lambda_{\min}\left(  \Sigma + \lambda_{a,2} I    \right)  \geq \Lambda_{\min} > 0.
\end{equation*}

\end{proof}

\subsubsection{Proof of Lemma~\ref{lem:eventL}}

\begin{proof}
Recall that,
\[ 
\mathcal{L} = \left\{ \shortnorm{\frac{1}{n_A} \sum_{i \in A} ( \bx_i - \bar{\bx}_A ) ( e_i - \bar{e}_A ) }_\infty  + || \lambda_{a,2} \left( I -  \hat \Sigma^{(a)}_{\EN}  \right) \bbeta ||_\infty \leq \eta \lambda_{a,1}  \right\}.
\]
By \eqref{xeterm} of Lemma~\ref{lem:concentrationofterms}
\begin{equation}
\label{lasso_part}
P \left( \shortnorm{\frac{1}{n_A} \sum_{i \in A} ( \bx_i - \bar{\bx}_A ) ( e_i - \bar{e}_A ) }_\infty \leq  \frac{2(1+\varsigma)L^{1/2}}{p_A} \sqrt{ \frac{2\log p}{n} } + \delta_n \right) \rightarrow 1.
\end{equation}
Moreover, simple algebra gives
\begin{eqnarray}
 \shortnorm{ \lambda_{a,2} \left( I -  \hat \Sigma^{(a)}_{\EN}  \right) \bbeta }_\infty  & = & \frac{ \lambda_{a,2} }{ 1 + \lambda_{a,2} } \shortnorm{  \left( I - \hat \Sigma_A \right)  \bbeta }_\infty \nonumber \\
& \leq & \frac{ \lambda_{a,2} }{ 1 + \lambda_{a,2} }   \left( \shortnorm{  I -   \Sigma }_\infty  + \shortnorm{ \hat \Sigma_A  - \Sigma  }_\infty \right) \shortnorm{ \bbeta}_1 , \nonumber
\end{eqnarray} 
where $\Sigma = X^TX/n = \sum_{i=1}^{n} \bx_i \bx_i^T/n$. By \eqref{cov-mat-part1} of Lemma~\ref{lem:concentrationofterms} and the scaling assumption \eqref{cond:s-scaling}, we have
\[ \shortnorm{ \hat \Sigma_A - \Sigma  }_\infty = o_p(1).    \]
By the fourth moment conditions on the covariates \eqref{cond:xmoment}, it is easy to show that
\[ \shortnorm{  I -   \Sigma }_\infty  \leq  L^{1/2} + 1 . \]
Therefore,
\begin{equation}
\label{en_part}
\shortnorm{ \lambda_{a,2} \left( I -  \hat \Sigma^{(a)}_{\EN}  \right) \bbeta }_\infty  \leq  \frac{ \lambda_{a,2} }{ 1 + \lambda_{a,2} } \left( L^{1/2} + 1 \right) || \bbeta ||_1 \left( 1 + o_p(1) \right).
\end{equation}
Combing \eqref{lasso_part}, \eqref{en_part} and assumption~\ref{last-last-cond} on $\lambda_{a,1}$, 
\begin{equation}
\lambda_{a,1} \in   \left( \frac{1}{\eta}, M \right] \times \left( \frac{2(1+\varsigma)L^{1/2}}{p_A} \sqrt{ \frac{2\log p}{n} } + \delta_n + \frac{ \lambda_{a,2} } { 1 + \lambda_{a,2} } ( L^{1/2} + 1 ) || \bbeta^{(a)}  ||_1 \right), \nonumber
\end{equation}
we have $P(\mathcal{L} ) \rightarrow 1 $.

\end{proof}

\subsubsection{Proof of Lemma~\ref{lem:eventM}}

\begin{proof}
Recall that,
\[   
\hat \Sigma^{(a)}_{\EN} = (1+\lambda_{a,2})^{-1}\left( \hat \Sigma_A + \lambda_{a,2}I\right), \quad \Sigma^{(a)}_{\EN} = (1+\lambda_{a,2})^{-1}\left( \Sigma + \lambda_{a,2}I\right) ,
\]
then,
\begin{eqnarray}
\shortnorm{ \hat \Sigma^{(a)}_{\EN}  - \Sigma^{(a)}_{\EN} }_\infty & = & (1+\lambda_{a,2})^{-1} \shortnorm{ \hat \Sigma_A -  \Sigma }_\infty 
 \leq  \shortnorm{ \hat \Sigma_A -  \Sigma }_\infty .
\end{eqnarray}
The conclusion follows directly from \eqref{cov-mat-part1} of Lemma~\ref{lem:concentrationofterms}.

\end{proof}

\subsubsection{Proof of Lemma~\ref{lem:num-of-selcted-varibles}}

\begin{proof}
In the proof of Theorem~\ref{mean risk consistent}, recalling $ - \bh =   \bbeta - \hat \bbeta_{\EN} $,
we have shown that on $\mathcal{L}$,
\begin{align} \label{on-event-L}
\bh^T \hat \Sigma^{(a)}_{\EN}\bh  &\leq
\lambda_{a,1} \left( \shortnorm{\bbeta}_1 - \shortnorm{\hat\bbeta_{\EN}}_1\right) + \eta \lambda_{a,1} \shortnorm{\bh}_1\notag \leq \lambda_{a,1} \left(1 +  \eta \right) || \bbeta - \hat \bbeta_{\EN} ||_1.
\end{align}
By KKT condition, we have shown that if $\hat \beta_{\EN,j} \neq 0$:
\begin{eqnarray}
\left |  \hat \Sigma^{(a)}_{\EN}   \left( \bbeta - \hat \bbeta_{\EN}  \right) + \frac{1}{n_A} \sum_{i\in A} (\bx_i - \bar{\bx}_A)( e_i - \bar{e}_A )  - \lambda_{a,2} \left( I -  \hat \Sigma_{\EN}  \right) \bbeta \right|_j  = \lambda_{a,1} , \nonumber 
\end{eqnarray}
Let $\Delta = \left| \hat \Sigma^{(a)}_{\EN}  ( \bbeta - \hat \bbeta_{\EN} ) \right| $, then on $\mathcal{L}$, we have if $\hat \beta_{\EN,j} \neq 0$
\begin{equation}
\label{lowerbound}
 \Delta_j   \geq (1 - \eta) \lambda_{a,1}.
\end{equation}
Therefore,
\begin{equation}
\label{upperbound}
\Delta^T \Delta =  \sum_{j=1}^{p} \Delta_j^2 \geq   \sum_{j: \hat \beta_{\EN,j} \neq 0} \Delta_j^2 \geq  (1 - \eta)^2\lambda_{a,1}^2 \hat s.
\end{equation}
On the other hand,
\begin{eqnarray}
\label{lowerbound}
\Delta^T \Delta & = & \bh^T \hat \Sigma^{(a)}_{\EN}  \hat \Sigma^{(a)}_{\EN} \bh  \leq  \lambda_{\max} \left(  \hat \Sigma^{(a)}_{\EN}  \right)  \bh^T \hat \Sigma^{(a)}_{\EN}  \bh  \nonumber \\
& \leq & \max\{  (n/n_A) \Lambda_{\max}, 1  \} \lambda_{a,1} \left(1 +  \eta \right) || \bbeta - \hat \bbeta_{\EN} ||_1,
\end{eqnarray}
where the last inequality is because of assumption~\ref{cond:largest} and the fact that
\[   \lambda_{\max} \left(  \hat \Sigma^{(a)}_{\EN}  \right)  \leq \frac{ \lambda_{\max}\left( (n/n_A) \Sigma \right) + \lambda_{a,2} }{ 1 + \lambda_{a,2} } \leq \max\{ (n/n_A) \Lambda_{\max}, 1   \} . \]
Combining \eqref{upperbound}, \eqref{lowerbound} and with probability going to $1$, $|| \bbeta - \hat \bbeta_{\EN} ||_1 \leq C s ( 1  + \eta ) \lambda_{a,1}$, where $C$ is a constant, we conclude that with probability going to $1$,
\begin{equation*}
\begin{split}
 \hat s & \leq \frac{1}{(1-\eta)^2} \frac{1}{\lambda_{a,1}^2} \max\{ (n/n_A) \Lambda_{\max}, 1  \}  \lambda_{a,1} ( 1  + \eta ) C s ( 1  + \eta ) \lambda_{a,1} \\
 & \leq \frac{C( 1+\eta )^2 \max\{ (n/n_A) \Lambda_{\max}, 1  \}}{(1-\eta)^2} s.
\end{split}
\end{equation*}

\end{proof}

\section{Tables}\label{app:tables}

\begin{table}[ht]
\centering
 \caption{\label{tab:mse1} Bias$^2$, Variance, MSE, coverage probability (Coverage) and mean interval length (Length) of different ACE estimators for Example 1.}
\begin{tabular*}{\hsize}{@{\extracolsep{\fill}}lcccccc}
 & & &  $(p, n_A)$ & &  \\
  \cline{2-7}
 Method  & (50, 80) & (50, 100) & (50, 120) & (500, 80) & (500, 100) & (500, 120) \\
  \hline \\
    & & & Bias$^2 \times 1000$ & & \\ \hline

unadjust & 0.06(0.75) & 0.50(0.99) & \textbf{ 0.00(0.45) }& 0.26(0.83) & 0.15(0.69) & 0.05(0.53) \\ 
  OLS & 0.10(0.47) & 1.95(1.34) & 0.62(0.90) & - & - & - \\ 
  Lasso & 0.01(0.22) & 0.42(0.46) & 0.09(0.28) & 0.05(0.23) & 0.50(0.51) & 0.05(0.23) \\ 
  EN & \textbf{0.01(0.18)} & \textbf{0.32(0.42)} & 0.12(0.30) & \textbf{0.01(0.17)} & 0.38(0.48) & 0.04(0.19) \\ 
  naiveEN  & 0.04(0.21) & 0.42(0.45) & 0.15(0.34) & 0.02(0.19) & 0.45(0.56) & 0.04(0.25) \\ 
  Ada & 0.17(0.35) & 0.44(0.51) & 0.13(0.33) & 0.05(0.21) & 0.49(0.54) & 0.03(0.21) \\ 
  Ridge & 0.14(0.33) & 0.75(0.58) & 0.28(0.39) & 0.56(0.80) & \textbf{0.00(0.28)} & \textbf{0.00(0.30)} \\ 
    \hline \\
    & & & Var $\times 1000$ & & \\ \hline
    
  unadjust & 479(22) & 410(18) & 386(17) & 415(19) & 364(14) & 358(15) \\ 
  OLS & 256(11) & 203(9) & 274(12) & - & - & - \\ 
  Lasso & 132(6) & 115(5) & 129(6) & 119(6) & 119(5) & 127(5) \\ 
  EN & \textbf{121(5)} & \textbf{109(5)} & \textbf{122(6)} & \textbf{105(5)} & \textbf{105(4) }& \textbf{114(5)} \\ 
  naiveEN  & 129(6) & 112(5) & 127(6) & 119(6) & 119(5) & 130(6) \\ 
  Ada & 129(6) & 115(5) & 130(6) & 117(5) & 115(5) & 122(5) \\ 
  Ridge & 127(5) & 111(5) & 124(6) & 238(11) & 212(10) & 222(9) \\ 
    \hline \\
    & & & MSE $\times 1000$  & & \\ \hline
    
  unadjust & 479(22) & 410(18) & 386(17) & 415(18) & 364(15) & 358(15) \\ 
  OLS & 257(11) & 205(9) & 274(12) & - & - & - \\ 
  Lasso & 132(6) & 115(5) & 129(6) & 119(5) & 119(5) & 127(5) \\ 
  EN & \textbf{121(6)} & \textbf{109(5)} & \textbf{122(6)} & \textbf{105(5)} & \textbf{105(5)} & \textbf{114(5)} \\ 
  naiveEN  & 129(6) & 112(5) & 127(6) & 119(5) & 120(5) & 130(6) \\ 
  Ada & 129(6) & 116(5) & 130(6) & 117(5) & 116(5) & 122(5) \\ 
  Ridge & 127(6) & 112(5) & 125(6) & 238(12) & 212(9) & 222(9) \\ 
    \hline \\
    & & & Coverage & & \\ \hline

  unadjust & 96.9 & 97.8 & 97.1 & 98 & 98.3 & 98.6 \\ 
  OLS & 91.2 & 95.0 & 90.4 & - & - & - \\ 
  Lasso & 98.3 & 98.6 & 98.2 & 98.5 & 97.8 & 98.0 \\ 
  EN & 98.4 & 98.6 & 99.0 & 99.4 & 98.8 & 98.9 \\ 
  naiveEN  & 98.6 & 98.9 & 99.0 & 98.6 & 98.0 & 98.0 \\ 
  Ada & 97.7 & 98.1 & 97.9 & 97.7 & 98.3 & 98.6 \\ 
  Ridge & 96.9 & 98.3 & 97.8 & 89.4 & 92.1 & 91.3 \\ 
    \hline \\
    & & & Length & & \\ \hline
    
  unadjust & 3.04 & 2.87 & 2.8 & 3.03 & 2.85 & 2.78 \\ 
  OLS & 1.74 & 1.73 & 1.77 & - & - & - \\ 
  Lasso & 1.73 & 1.69 & 1.74 & 1.68 & 1.65 & 1.70 \\ 
  EN & 1.74 & 1.71 & 1.75 & 1.74 & 1.70 & 1.75 \\ 
  naiveEN  & 1.82 & 1.77 & 1.82 & 1.76 & 1.74 & 1.81 \\ 
  Ada & 1.67 & 1.65 & 1.69 & 1.67 & 1.65 & 1.69 \\ 
  Ridge & \textbf{1.56} & \textbf{1.56} & \textbf{1.59} & \textbf{1.62} & \textbf{1.60} & \textbf{1.62} \\ 
   \hline
\end{tabular*}
The numbers in parentheses are the corresponding standard errors estimated by using the bootstrap with $B=500$ resamples of the ATE estimates.
\end{table}

\begin{table}[ht]
\centering
 \caption{\label{tab:mse2} Bias$^2$, Variance, MSE, coverage probability (Coverage) and mean interval length (Length) of different ACE estimators for Example 2.}
\begin{tabular*}{\hsize}{@{\extracolsep{\fill}}lcccccc}
 & & &  $(p, n_A)$ & &  \\
  \cline{2-7}
 Method  & (50, 80) & (50, 100) & (50, 120) & (500, 80) & (500, 100) & (500, 120) \\
  \hline \\
    & & & Bias$^2 \times 1000$ & & \\ \hline
    
 unadjust & \textbf{0.00(0.73)} & \textbf{0.01(0.62)} & \textbf{0.54(1.56)} & 0.57(1.05) & 0.13(0.77) & \textbf{0.29(0.94)} \\ 
  OLS & 1.30(1.29) & 0.06(0.39) & 0.71(0.93) & - & - & - \\ 
  Lasso & 0.78(0.68) & 0.09(0.27) & 1.07(0.74) & 0.03(0.25) & 0.01(0.21) & 0.48(0.58) \\ 
  EN & 0.81(0.64) & 0.20(0.41) & 0.75(0.67) & 0.07(0.30) & \textbf{0.00(0.2)} & 0.34(0.46) \\ 
  naiveEN & 0.65(0.61) & 0.08(0.26) & 0.83(0.66) & 0.03(0.26) & 0.02(0.22) & 0.50(0.58) \\ 
  Ada & 1.06(0.81) & 0.13(0.31) & 1.42(0.91) & 0.01(0.25) & 0.04(0.25) & 0.51(0.62) \\ 
  Ridge & 0.56(0.55) & 0.17(0.33) & 0.76(0.58) & \textbf{0.00(0.35)} & 0.01(0.31) & 0.38(0.76) \\ 
      \hline \\
    & & & Var $\times 1000$ & & \\ \hline

  unadjust & 484(22) & 524(22) & 596(27) & 365(15) & 433(20) & 490(21) \\ 
  OLS & 280(14) & 209(9) & 267(14) & - & - & - \\ 
  Lasso & 132(6) & 126(5) & 128(6) & 150(6) & 142(6) & 151(7) \\ 
  EN & \textbf{126(6)} & \textbf{118(5)} & \textbf{116(5)} & \textbf{139(6)} & \textbf{127(5)} & \textbf{134(6)} \\ 
  naiveEN & 132(6) & 124(5) & 122(5) & 151(6) & 144(6) & 152(7) \\ 
  Ada & 133(5) & 130(6) & 129(6) & 150(6) & 138(6) & 145(7) \\ 
  Ridge & 129(6) & 125(5) & 124(5) & 234(9) & 255(11) & 295(13) \\ 
    \hline \\
    & & & MSE $\times 1000$  & & \\ \hline  
  
  unadjust & 484(22) & 524(23) & 597(25) & 365(15) & 433(18) & 491(21) \\ 
  OLS & 281(13) & 209(10) & 267(13) & - & - & - \\ 
  Lasso & 133(6) & 126(5) & 129(6) & 150(6) & 142(6) & 151(7) \\ 
  EN & \textbf{127(6)} & \textbf{118(5)} & \textbf{117(5)} & \textbf{139(6)} & \textbf{127(5)} & \textbf{134(6)} \\ 
  naiveEN & 133(6) & 124(5) & 123(5) & 151(6) & 144(6) & 153(7) \\ 
  Ada & 134(6) & 130(6) & 130(6) & 150(6) & 138(6) & 146(6) \\ 
  Ridge & 130(5) & 125(5) & 125(6) & 234(10) & 255(11) & 295(13) \\ 
    \hline \\
    & & & Coverage & & \\ \hline  
  
  unadjust & 97.4 & 97.5 & 97.3 & 98.9 & 97.7 & 97.3 \\ 
  OLS & 91.8 & 94.6 & 93.1 & - & - & - \\ 
  Lasso & 99.3 & 99.2 & 99.1 & 97.4 & 98.0 & 96.9 \\ 
  EN & 99.3 & 99.5 & 99.6 & 98.5 & 99.1 & 98.5 \\ 
  naiveEN & 99.2 & 99.6 & 99.6 & 96.4 & 98.0 & 96.3 \\ 
  Ada & 98.6 & 98.7 & 98.7 & 98.0 & 98.1 & 98.2 \\ 
  Ridge & 98.1 & 98.4 & 98.0 & 90.5 & 89.9 & 88.0 \\ 
    \hline \\
    & & & Length & & \\ \hline 
 
  unadjust & 3.18 & 3.24 & 3.44 & 2.92 & 2.97 & 3.14 \\ 
  OLS & 1.84 & 1.81 & 1.84 & - & - & - \\ 
  Lasso & 1.88 & 1.84 & 1.89 & 1.76 & 1.74 & 1.75 \\ 
  EN & 1.89 & 1.86 & 1.89 & 1.86 & 1.83 & 1.86 \\ 
  naiveEN & 1.94 & 1.90 & 1.96 & 1.88 & 1.88 & 1.89 \\ 
  Ada & 1.83 & 1.79 & 1.82 & 1.80 & 1.77 & 1.79 \\ 
  Ridge & \textbf{1.69} & \textbf{1.66} & \textbf{1.66} & \textbf{1.64} & \textbf{1.63} & \textbf{1.65} \\ 
   \hline
\end{tabular*}
The numbers in parentheses are the corresponding standard errors estimated by using the bootstrap with $B=500$ resamples of the ATE estimates.
\end{table}

\begin{table}[ht]
\centering
 \caption{\label{tab:mse3} Bias$^2$, Variance, MSE, coverage probability (Coverage) and mean interval length (Length) of different ACE estimators for Example 3.}
\begin{tabular*}{\hsize}{@{\extracolsep{\fill}}lcccccc}
 & & &  $(p, n_A)$ & &  \\
  \cline{2-7}
 Method  & (50, 80) & (50, 100) & (50, 120) & (500, 80) & (500, 100) & (500, 120) \\
  \hline \\
    & & & Bias$^2 \times 1000$ & & \\ \hline

  unadjust & \textbf{0.00(0.69)} & 0.18(0.79) & 0.39(0.96) & 0.19(0.72) & 0.27(0.79) & 0.42(0.84) \\ 
  OLS & 0.97(1.17) & 0.01(0.33) & 0.04(0.45) & - & - & - \\ 
  Lasso & 0.40(0.57) & \textbf{0.00(0.2)} & 0.03(0.25) & 0.01(0.21) & 0.05(0.25) & 0.16(0.34) \\ 
  EN & 0.26(0.44) & 0.01(0.19) & 0.06(0.25) & \textbf{0.00(0.18)} & 0.04(0.23) & \textbf{0.06(0.25)} \\ 
  naiveEN & 0.30(0.43) & 0.01(0.17) & 0.02(0.19) & 0.01(0.22) & 0.04(0.26) & 0.12(0.31) \\ 
  Ada & 0.59(0.59) & 0.02(0.21) & \textbf{0.01(0.20) }& 0.00(0.22) & 0.02(0.26) & 0.12(0.31) \\ 
  Ridge & 0.14(0.35) & 0.03(0.22) & 0.06(0.25) & 0.05(0.25) & \textbf{0.02(0.19)} & 0.11(0.36) \\ 
      \hline \\
    & & & Var $\times 1000$ & & \\ \hline
      
   unadjust & 516(24) & 444(21) & 400(18) & 395(18) & 373(16) & 363(15) \\ 
  OLS & 309(13) & 243(12) & 264(11) & - & - & - \\ 
  Lasso & 146(7) & 130(6) & 134(6) & 144(6) & 132(6) & 138(6) \\ 
  EN & \textbf{138(6)} & \textbf{125(6)} & \textbf{126(5)} & 142(7) & 130(6) & 137(6) \\ 
  naiveEN & 140(6) & 129(6) & 130(6) & 140(6) & 130(6) & 136(5) \\ 
  Ada & 148(7) & 128(6) & 129(6) & 156(7) & 139(6) & 146(6) \\ 
  Ridge & 138(6) & 129(6) & 127(5) & \textbf{135(6)} & \textbf{124(6)} & \textbf{130(5)} \\ 
    \hline \\
    & & & MSE $\times 1000$  & & \\ \hline    
  
   unadjust & 516(25) & 444(20) & 400(19) & 396(18) & 374(17) & 364(15) \\ 
  OLS & 310(13) & 243(11) & 264(12) & - & - & - \\ 
  Lasso & 147(7) & 130(6) & 134(6) & 144(6) & 132(6) & 138(5) \\ 
  EN & \textbf{138(6)} & \textbf{125(6)} & \textbf{126(6) }& 142(6) & 130(6) & 137(6) \\ 
  naiveEN & 140(6) & 129(7) & 130(6) & 140(6) & 130(6) & 136(6) \\ 
  Ada & 149(7) & 128(6) & 129(6) & 156(7) & 139(6) & 146(6) \\ 
  Ridge & 139(6) & 129(6) & 128(6) & \textbf{135(6)} & \textbf{124(6)} & \textbf{130(5)} \\ 
    \hline \\
    & & & Coverage & & \\ \hline  
      
   unadjust & 96.8 & 97.2 & 97.7 & 97.8 & 97.6 & 97.5 \\ 
  OLS & 90.8 & 93.9 & 94.3 & - & - & - \\ 
  Lasso & 98 & 98.3 & 98.8 & 96.8 & 97.7 & 97.9 \\ 
  EN & 99 & 98.8 & 99.4 & 97.3 & 97.7 & 97.7 \\ 
  naiveEN & 99.5 & 99.3 & 99.6 & 78.0 & 82.1 & 84.5 \\ 
  Ada & 97.4 & 98.2 & 98.7 & 96.2 & 97.4 & 97.1 \\ 
  Ridge & 98.1 & 98.3 & 98.5 & 98.0 & 97.8 & 97.9 \\ 
    \hline \\
    & & & Length & & \\ \hline   
  
   unadjust & 3.19 & 2.99 & 2.91 & 2.92 & 2.76 & 2.72 \\ 
  OLS & 1.92 & 1.88 & 1.92 & - & - & - \\ 
  Lasso & 1.90 & 1.85 & 1.87 & 1.72 & 1.69 & 1.72 \\ 
  EN & 1.99 & 1.92 & 1.95 & 1.98 & 1.90 & 1.87 \\ 
  naiveEN & 2.21 & 2.10 & 2.17 & 1.98 & 1.64 & 1.52 \\ 
  Ada & 1.82 & 1.79 & 1.83 & 1.69 & 1.69 & 1.72 \\ 
  Ridge & \textbf{1.81} & \textbf{1.77} & \textbf{1.79} & \textbf{1.67} & \textbf{1.64} & \textbf{1.66} \\ 
   \hline
\end{tabular*}
The numbers in parentheses are the corresponding standard errors estimated by using the bootstrap with $B=500$ resamples of the ATE estimates.
\end{table}

\begin{table}[ht]
\centering
 \caption{\label{tab:mse4} Bias$^2$, Variance, MSE, coverage probability (Coverage) and mean interval length (Length) of different ACE estimators for Example 4.}
\begin{tabular*}{\hsize}{@{\extracolsep{\fill}}lcccccc}
 & & &  $(p, n_A)$ & &  \\
  \cline{2-7}
 Method  & (50, 80) & (50, 100) & (50, 120) & (500, 80) & (500, 100) & (500, 120) \\
  \hline \\
    & & & Bias$^2 \times 1000$ & & \\ \hline
    
  unadjust & 1.78(3.25) & 0.52(2.21) & 0.15(1.55) & 0.04(2.11) & 1.24(3.54) & 0.47(2.31) \\ 
  OLS & 1.89(1.03) & 0.01(0.15) & 0.10(0.28) & - & - & - \\ 
  Lasso & 0.31(0.36) & 0.02(0.12) & 0.02(0.15) & 0.01(0.46) & 0.88(1.42) & \textbf{0.01(0.88)} \\ 
  EN & 0.37(0.35) & 0.00(0.11) & 0.03(0.14) & 0.01(0.39) & 0.51(1.03) & 0.05(0.96) \\ 
  naiveEN & 0.39(0.35) & \textbf{0.00(0.10)} & 0.05(0.16) & 0.08(0.53) & 0.64(1.27) & 0.02(0.85) \\ 
  Ada & 0.46(0.38) & 0.02(0.13) & \textbf{0.02(0.12)} & \textbf{0.00(0.42)} & \textbf{0.30(0.86)} & 0.04(0.95) \\ 
  Ridge & \textbf{0.31(0.34)} & 0.01(0.11) & 0.02(0.13) & \textbf{0.00(1)} & 0.68(1.56) & 0.16(1.52) \\ 
      \hline \\
    & & & Var $\times 1000$ & & \\ \hline
      
   unadjust & 1127(55) & 1022(45) & 1030(46) & 1407(61) & 1283(58) & 1361(59) \\ 
  OLS & 143(6) & 107(5) & 131(6) & - & - & - \\ 
  Lasso & 84(4) & 75(3) & 88(3) & 344(15) & 453(22) & 638(29) \\ 
  EN & \textbf{72(3)} & \textbf{69(3)} & \textbf{80(3)} & \textbf{300(13)} & \textbf{411(19)} & \textbf{608(26)} \\ 
  naiveEN & 77(4) & 71(3) & 84(4) & 310(13) & 414(20) & 602(27) \\ 
  Ada & 74(4) & 74(3) & 83(4) & 317(14) & 438(20) & 629(26) \\ 
  Ridge & 87(4) & 75(3) & 83(4) & 730(32) & 690(32) & 800(34) \\ 
    \hline \\
    & & & MSE $\times 1000$  & & \\ \hline  
      
   unadjust & 1129(53) & 1022(46) & 1030(47) & 1407(63) & 1284(59) & 1361(62) \\ 
  OLS & 145(5) & 107(5) & 131(7) & - & - & - \\ 
  Lasso & 84(4) & 75(3) & 88(4) & 344(15) & 454(22) & 638(29) \\ 
  EN & \textbf{72(3)} &\textbf{ 69(3)} & \textbf{80(3)} & \textbf{300(13) }& \textbf{412(18)} & \textbf{608(26)} \\ 
  naiveEN & 77(4) & 71(3) & 84(4) & 310(13) & 415(21) & 602(27) \\ 
  Ada & 75(3) & 74(3) & 83(4) & 317(15) & 438(20) & 630(28) \\ 
  Ridge & 87(4) & 75(3) & 84(4) & 730(33) & 691(31) & 801(35) \\ 
    \hline \\
    & & & Coverage & & \\ \hline  
      
   unadjust & 95.5 & 95.3 & 95.2 & 97.2 & 97.4 & 97.3 \\ 
  OLS & 90.3 & 93.2 & 92.7 & - & - & - \\ 
  Lasso & 96.9 & 97.6 & 97.0 & 94.6 & 89.5 & 87.4 \\ 
  EN & 98.4 & 98.9 & 98.1 & 84.8 & 78.5 & 71.2 \\ 
  naiveEN & 97.7 & 98.6 & 98.4 & 66.6 & 63.2 & 52.3 \\ 
  Ada & 97.1 & 97.1 & 96.9 & 97.4 & 95.6 & 92.8 \\ 
  Ridge & 90.8 & 93.5 & 91.7 & 80.6 & 81.3 & 76.8 \\ 
    \hline \\
    & & & Length & & \\ \hline 
      
   unadjust & 4.32 & 4.12 & 4.07 & 5.16 & 5.13 & 5.31 \\ 
  OLS & 1.27 & 1.23 & 1.24 & - & - & - \\ 
  Lasso & 1.33 & 1.28 & 1.32 & 2.49 & 2.67 & 2.98 \\ 
  EN & 1.34 & 1.31 & 1.35 & 2.11 & 2.30 & 2.41 \\ 
  naiveEN & 1.34 & 1.33 & 1.40 & \textbf{1.30} & \textbf{1.45} & \textbf{1.53} \\ 
  Ada & 1.24 & 1.23 & 1.27 & 2.89 & 3.10 & 3.39 \\ 
  Ridge & \textbf{1.03} & \textbf{1.02} & \textbf{1.02} & 2.24 & 2.22 & 2.20 \\ 
   \hline
\end{tabular*}
The numbers in parentheses are the corresponding standard errors estimated by using the bootstrap with $B=500$ resamples of the ATE estimates.
\end{table}

\end{document}